\documentclass[11pt,french,english]{amsart}
\usepackage[T1]{fontenc}
\usepackage[latin9]{inputenc}
\usepackage[a4paper]{geometry}
\geometry{verbose,tmargin=2.5cm,bmargin=3cm,lmargin=2.5cm,rmargin=2.2cm}
\setcounter{secnumdepth}{5}
\setcounter{tocdepth}{5}
\usepackage{prettyref}
\usepackage{amsmath}
\usepackage{amssymb}

\makeatletter
%%%%%%%%%%%%%%%%%%%%%%%%%%%%%% Textclass specific LaTeX commands.
\usepackage{latexsym}
\theoremstyle{plain}
\newtheorem{stthm}{Theorem}[section]

\numberwithin{equation}{section} %% Comment out for sequentially-numbered
\numberwithin{figure}{section} %% Comment out for sequentially-numbered
\numberwithin{table}{section} %% Comment out for sequentially-numbered

\def\newrefformat#1#2{%
  \@namedef{pr@#1}##1{#2}}
\newrefformat{eq}{\textup{(\ref{#1})}}
\newrefformat{ssec}{sub-section \ref{#1}}
\newrefformat{sssec}{sub-sub-section \ref{#1}}
\newrefformat{cha}{chapter \ref{#1}}
\newrefformat{sec}{section \ref{#1}}
\newrefformat{tab}{table \ref{#1} on page \pageref{#1}}
\newrefformat{fig}{figure \ref{#1} on page \pageref{#1}}
\def\prettyref#1{\@prettyref#1|}
\def\@prettyref#1|#2|{%
  \expandafter\ifx\csname pr@#1\endcsname\relax%
    \PackageWarning{prettyref}{Reference format #1\space undefined}%
    \ref{#1|#2}%
  \else%
    \csname pr@#1\endcsname{#1|#2}%
  \fi%
}

\DeclareRobustCommand{\QED}{%
  \ifmmode % if math mode, assume display: omit penalty etc.
  \else \leavevmode\unskip\penalty9999 \hbox{}\nobreak\hfill
  \fi
  \quad\hbox{\QEDsymbol}}
\newcommand{\QEDsymbol}{Q.E.D.}

 \theoremstyle{definition}
 \newtheorem{stdefn}[stthm]{Definition}
 \theoremstyle{plain}
 \newtheorem{stprop}[stthm]{Proposition}
 \theoremstyle{plain}
 \newtheorem{stlem}[stthm]{Lemma}
 \theoremstyle{remark}
 \newtheorem*{claim*}{Claim}
 \theoremstyle{plain}
 \newtheorem{stcor}[stthm]{Corollary}
 \theoremstyle{remark}
 \newtheorem{strem}[stthm]{Remark}

%%%%%%%%%%%%%%%%%%%%%%%%%%%%%% User specified LaTeX commands.
\usepackage[bookmarksopen, naturalnames]{hyperref}
\newrefformat{thm}{Theorem \ref{#1}}
\newrefformat{prop}{Proposition \ref{#1}}
\newrefformat{cor}{Corollary \ref{#1}}
\newrefformat{lem}{Lemma \ref{#1}}
\newrefformat{rem}{Remark \ref{#1}}
\newrefformat{sec}{Section \ref{#1}}
\newrefformat{sub}{Subsection \ref{#1}}
\newrefformat{subsub}{Paragraph \ref{#1}}
\newrefformat{def}{Definition \ref{#1}}

\newenvironment{myitemize} {\begin{list}{}{%
\setlength{\labelwidth}{1em}% largeur de la boite englobant l'étiquette 
\setlength{\labelsep}{0pt}% espace entre l'entrée de l'item et l'étiquette 
\setlength{\leftmargin}{1em}% marge de gauche
}}%
{\end{list}}

\makeatother

\usepackage{babel}
\addto\extrasfrench{\providecommand{\fg}{\ifdim\lastskip>\z@\unskip\fi~\frqq}}

\begin{document}

\title{Composition operators on weighted Bergman-Orlicz spaces on the ball}

\author{Stéphane Charpentier}
\begin{abstract}
We give embedding theorems for weighted Bergman-Orlicz spaces on the
ball and then apply our results to the study of the boundedness and
the compactness of composition operators in this context. As one of
the motivations of this work, we show that there exist some weighted
Bergman-Orlicz spaces, different from $H^{\infty}$, on which every
composition operator is bounded.
\end{abstract}

\subjclass[2000]{Primary: 47B33 - Secondary: 32C22; 46E15}

\keywords{Bergman-Orlicz space - Carleson measure - Composition operator}

\address{Charpentier Stéphane, Département de Mathématiques, Bâtiment 425,
Université Paris-Sud, F-91405, Orsay, France}

\email{stephane.charpentier@math.u-psud.fr}

\maketitle
\def\leftmark{\MakeUppercase{Composition operators on Bergman-Orlicz
and Hardy-Orlicz spaces}}

\def\rightmark{\MakeUppercase{Stéphane Charpentier}}

\section{Introduction and preliminaries}

\subsection{Introduction}

Let $\mathbb{B}_{N}$ denote the unit ball in $\mathbb{C}^{N}$ and
$\phi$ an analytic map from $\mathbb{B}_{N}$ into itself. In this
paper, we are interesting in characterizing the continuity and the
compactness of composition operators $C_{\phi}$, defined by $C_{\phi}\left(f\right)=f\circ\phi$,
on weighted Bergman-Orlicz spaces. On the classical weighted Bergman
spaces $A_{\alpha}^{p}\left(\mathbb{B}_{N}\right)$, or on the Hardy
spaces $H^{p}\left(\mathbb{B}_{N}\right)$ as well, the boundedness
or the compactness of $C_{\phi}$ can be characterized in terms of
Carleson measures (see e.g. \cite{COWEN-MACCLUER}). In one variable,
the Littlewood subordination principle is known to be the main tool
to show that composition operators are always bounded on these spaces,
whereas B. MacCluer and J. Shapiro exhibited self-maps $\phi$ on
$\mathbb{B}_{N}$ ($N>1$) inducing non-bounded composition operators
on $A_{\alpha}^{p}\left(\mathbb{B}_{N}\right)$ or on $H^{p}\left(\mathbb{B}_{N}\right)$.
As for the compactness, the same authors gave an example of a surjective
analytic self-map of $\mathbb{D}$ defining a compact composition
operators on these spaces (\cite{MACCLUER-SHAPIRO}). In comparison,
it is easy to check that every $C_{\phi}$ is bounded on $H^{\infty}$
and is compact if and only if $\left\Vert \phi\right\Vert _{\infty}<1$,
whatever $N\geq1$. This arises the question: what is the behavior
of composition operators on significant spaces between $H^{\infty}$
and $A_{\alpha}^{p}\left(\mathbb{B}_{N}\right)$ (or $H^{p}\left(\mathbb{B}_{N}\right)$)?

This question motivated P. Lefèvre, D. Li, H. Queffélec and L. Rodr\'iguez-Piazza
to start, since 2006, a systematic study of composition operators
on Bergman-Orlicz spaces $A^{\psi}\left(\mathbb{D}\right)$ and Hardy-Orlicz
spaces $H^{p}\left(\mathbb{D}\right)$ on the unit disk of $\mathbb{C}$
(e.g. \cite{QUEF-LI-LE-RO-PI,QUEF-LI-LEF-RO-B-O-H-O,QUEF-LI-LEF-ROD-H-2-H-O-COMP-OP,QUEF-LI-LEFEVRE-BERGMAN-ORLICZ}).
Indeed, these spaces reveals to be a satisfying intermediate scale
of spaces between $H^{\infty}$ and the classical Bergman or Hardy
spaces, depending on the growth of the Orlicz function $\psi$. As
a part of their work, they gave an analytic surjective self-map $\phi:\mathbb{D}\rightarrow\mathbb{D}$
such that $C_{\phi}$ is compact on $H^{p}\left(\mathbb{D}\right)$,
extending the preceding result by MacCluer and Shapiro. They partially
solved the same problem in the context of Bergman-Orlicz spaces, by
underlying the fact that the compactness of $C_{\phi}$ on some Hardy-Orlicz
spaces implies the compactness of $C_{\phi}$ on the correspondant
Bergman-Orlicz spaces. By the way, they prove that it is unlikely
to find Orlicz functions $\psi$ such that compactness of composition
operators on $A^{\psi}\left(\mathbb{D}\right)$ (and definitely on
$H^{p}\left(\mathbb{D}\right)$) should be equivalent to that on $H^{\infty}$.

Yet, looking at the several variables setting, the same kind of question
arises, but now even for continuity, since there exists symbol $\phi$
such that $C_{\phi}$ is not bounded on the classical Bergman spaces
$A^{p}\left(\mathbb{B}_{N}\right)$, although every $C_{\phi}$ is
bounded on $H^{\infty}$. The purpose of this paper is to investigate
this problem for weighted Bergman-Orlicz spaces, that is to answer
the question: does there exist some Orlicz function $\psi$ such that
every composition operator is bounded on the weighted Bergman-Orlicz
space $A_{\alpha}^{\psi}\left(\mathbb{B}_{N}\right)$? To do this,
we need to characterize boundedness of composition operators on Bergman-Orlicz
spaces, in a general enough fashion. By passing, we give a characterization
of the compactness of $C_{\phi}$ on $A_{\alpha}^{\psi}\left(\mathbb{B}_{N}\right)$,
which may arise new questions and provide eventually a better understanding
of the behavior of composition operators on these spaces.

We have to mention that, in 2010, Z. J. Jiang gave embedding theorems
and characterizations of the boundedness and the compactness of composition
operators on Bergman-Orlicz spaces $A^{\psi}\left(\mathbb{B}_{N}\right)$
when $\psi$ satisfies the so-called $\Delta_{2}$-Condition (\cite{JIANG}).
This condition somehow implies that the space $A^{\psi}\left(\mathbb{B}_{N}\right)$
is {}``closed'' to a classical\textbf{ }Bergman space and, as we
could guess, these characterizations are the same than that known
for Bergman spaces; their applications to composition operators do
not provide different results from that obtained in the classical
framework; especially, they give no information for {}``small''
Bergman-Orlicz spaces, in which we are especially interesting in.

This paper is organized as follows: after introducing the notions
and materials in Section 1, we give, in section 2, general embedding
theorems for weighted Bergman-Orlicz spaces. Precisely, given two
arbitrary Orlicz functions $\psi_{1}$ and $\psi_{2}$, we exhibit
in \prettyref{thm|Embed_Thm_Berg_Cont} and \prettyref{thm|Embed_Berg_orlizc_Comp}
necessary and sufficient conditions on a measure $\mu$ on the ball
under which the canonical embedding $A_{\alpha}^{\psi_{1}}\left(\mathbb{B}_{N}\right)\hookrightarrow L^{\psi_{2}}\left(\mu\right)$
holds or is compact. In general, we do not get characterizations,
yet we see that we do when $\psi_{1}=\psi_{2}$ satisfies some convenient
regular conditions. In Section 3, applications are given to composition
operators and, as a consequence, we exhibit a class of Orlicz functions
defining weighted Bergman-Orlicz spaces on which every composition
operator is bounded.

\subsection{\label{sub|Orlicz_spaces_def}Orlicz spaces - Notations}

\subsubsection{Definitions}

In this whole paper, we denote by $\psi:\mathbb{R}_{+}\rightarrow\mathbb{R}_{+}$
an Orlicz function, i.e. a strictly convex function vanishing at $0$,
continuous at $0$ and satisfying\[
{\displaystyle \frac{\psi(x)}{x}}\xrightarrow[x\rightarrow\infty]{}+\infty.\]
Note that an Orlicz function is non-decreasing. Considering a probability
space $\left(\Omega,\mathbb{P}\right)$, we define the Orlicz space
$L^{\psi}\left(\Omega\right)$ as the space of all (equivalence classes
of) measurable complex functions $f$ on $\Omega$ for which there
is a constant $C>0$ such that\[
\int_{\Omega}\psi\left(\frac{\left|f\right|}{C}\right)d\mathbb{P}<\infty.\]
This space may be normalized by the Luxemburg norm\[
\left\Vert f\right\Vert _{\psi}=\inf\left\{ C>0,\,\int_{\Omega}\psi\left(\frac{\left|f\right|}{C}\right)d\mathbb{P}\leq1\right\} ,\]
which makes $\left(L^{\psi}\left(\Omega\right),\left\Vert .\right\Vert _{\psi}\right)$
a Banach space such that $L^{\infty}\left(\Omega\right)\subset L^{\psi}\left(\Omega\right)\subset L^{1}\left(\Omega\right)$.
Observe that if $\psi(x)=x^{p}$ for every $x$, then $L^{\psi}\left(\Omega\right)=L^{p}\left(\Omega\right)$.
It is usual to introduce the Morse-Transue space $M^{\psi}\left(\Omega\right)$,
which is the subspace of $L^{\psi}\left(\Omega\right)$ generated
by $L^{\infty}\left(\Omega\right)$.

To every Orlicz function $\psi$, we shall associate its complementary
function $\Phi:\mathbb{R}_{+}\rightarrow\mathbb{R}_{+}$ defined by\[
\Phi(y)=\sup_{x\in\mathbb{R}_{+}}\left\{ xy-\psi(x)\right\} .\]
We may verify that $\Phi$ is also an Orlicz function (see \cite{RAO-REN},
Section 1.3). If both $L^{\Phi}\left(\Omega\right)$ and $L^{\psi}\left(\Omega\right)$
are normed by the Luxemburg norm, then $L^{\Phi}\left(\Omega\right)$
is isomorphic to the dual of $M^{\psi}\left(\Omega\right)$ (\cite[IV, 4.1, Theorem 7]{RAO-REN}).

\subsubsection{Three classes of Orlicz functions}

We now introduce essentially three classes of Orlicz functions which
will appear several times in this paper. This part may appear a little
bit technical, but we would like to convince the reader that this
classification, which permits to get a meaningful scale of Orlicz
spaces between $L^{\infty}$ and $L^{p}$, is quite natural.

\smallskip{}
\begin{myitemize}\item The first class is that of Orlicz functions
which satisfy the so-called $\Delta_{2}$-Condition which is a condition
of moderate growth.
\begin{stdefn}
\label{def|def_delta_2_condition}Let $\psi$ be an Orlicz function.
We say that $\psi$ satisfies the $\Delta_{2}$-Condition if there
exist $x_{0}>0$ and a constant $K>1$, such that\[
\psi\left(2x\right)\leq K\psi\left(x\right)\]
for any $x\geq x_{0}$.
\end{stdefn}
 For example, $x\longmapsto ax^{p}\left(1+b\log\left(x\right)\right)$,
$p>1$, $a>0$ and $b\geq0$, satisfies the $\Delta_{2}$-Condition.
Corollary 5, Chapter II of \cite{RAO-REN} gives:
\begin{stprop}
\label{prop|grow_delta_2_condition}Let $\psi$ be an Orlicz function
satisfying the $\Delta_{2}$-Condition, then there are some $p>1$
and $C>0$ such that $\psi\left(x\right)\leq Cx^{p}$, for $x$ large
enough. Therefore, $L^{p}\subset L^{\psi}\subset L^{1}$, for some
$p>1$.
\end{stprop}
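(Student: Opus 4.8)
The plan is to iterate the $\Delta_{2}$-inequality in order to dominate $\psi$ by a power function, whose exponent turns out to be $\log_{2}K$, and then to arrange for this exponent to exceed $1$.

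First I would note that the constant $K$ of \prettyref{def|def_delta_2_condition} may always be chosen strictly larger than $2$. Indeed, convexity of $\psi$ together with $\psi(0)=0$ gives $2\psi(x)\le\psi(2x)$ for every $x$, so any admissible $K$ satisfies $K\ge2$; and if $K=2$ were admissible on a half-line $[x_{0},\infty)$, then $\psi(2x)=2\psi(x)$ there, which makes the non-decreasing function $x\mapsto\psi(x)/x$ invariant under $x\mapsto2x$ on $[x_{0},\infty)$, hence constant there, contradicting $\psi(x)/x\to\infty$. So I fix $x_{0}>0$ and $K>2$ with $\psi(2x)\le K\psi(x)$ for all $x\ge x_{0}$, and set $p:=\log_{2}K>1$.

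Next, a routine induction yields $\psi(2^{n}x)\le K^{n}\psi(x)$ for every $x\ge x_{0}$ and every integer $n\ge0$. Given an arbitrary $x\ge x_{0}$, I would pick the integer $n\ge1$ with $2^{n-1}x_{0}\le x<2^{n}x_{0}$ and use the monotonicity of $\psi$ to get $\psi(x)\le\psi(2^{n}x_{0})\le K^{n}\psi(x_{0})$; since $n\le1+\log_{2}(x/x_{0})$, this gives $K^{n}\le K(x/x_{0})^{p}$, hence $\psi(x)\le Cx^{p}$ for $x\ge x_{0}$ with $C=K\psi(x_{0})x_{0}^{-p}$, which is the first assertion. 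The inclusion $L^{p}\subset L^{\psi}$ then follows by combining this bound with the trivial estimate $\psi\le\psi(x_{0})$ on $[0,x_{0}]$: for $f\in L^{p}(\Omega)$ we obtain $\int_{\Omega}\psi(|f|)\,d\mathbb{P}\le\psi(x_{0})+C\int_{\Omega}|f|^{p}\,d\mathbb{P}<\infty$ because $\mathbb{P}$ is a probability measure, so $f\in L^{\psi}(\Omega)$, while $L^{\psi}(\Omega)\subset L^{1}(\Omega)$ has already been observed. The one genuinely delicate point is the normalization $K>2$; it is exactly what promotes the exponent from the $p>0$ given by the bare $\Delta_{2}$-condition to $p>1$, everything else being a bookkeeping exercise.
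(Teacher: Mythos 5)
Your argument is correct. Note, however, that the paper does not prove this proposition at all: it simply quotes it from \cite{RAO-REN} (Chapter II, Corollary 5), so your proposal supplies a self-contained proof of a cited fact, and it is essentially the standard one: iterate $\psi(2x)\le K\psi(x)$ to get $\psi(2^{n}x_{0})\le K^{n}\psi(x_{0})$, locate an arbitrary large $x$ in a dyadic interval $[2^{n-1}x_{0},2^{n}x_{0})$, and read off $\psi(x)\le Cx^{p}$ with $p=\log_{2}K$; the passage to $L^{p}\subset L^{\psi}$ on a probability space is exactly the splitting you describe. One remark: the ``delicate point'' you isolate (showing any admissible $K$ is $\ge 2$ and that $K=2$ is impossible) is more work than is needed, since the $\Delta_{2}$-inequality is only weakened when $K$ is increased, so one may simply replace the given $K$ by any larger constant, e.g.\ $\max(K,3)$, and obtain $p=\log_{2}K>1$ at once; your longer route is nevertheless valid (it uses $\psi>0$ on $(0,\infty)$, which holds since $\psi$ is strictly convex with $\psi(0)=0$, and the monotonicity of $x\mapsto\psi(x)/x$, which follows from convexity). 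So the proposal is a correct, slightly over-engineered but complete, elementary proof of the quoted result.
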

\smallskip{}
\item The two following conditions are also regular conditions which
are satisfied by most of the Orlicz functions that we are interesting
in.
\begin{stdefn}
\label{def|def_nabla_0_cond}Let $\psi$ be an Orlicz function. We
say that $\psi$ satisfies the $\nabla_{0}$-Condition if there exist
some $x_{0}>0$ and some constant $C\geq1$, such that for every $x_{0}\leq x\leq y$
we have\[
\frac{\psi\left(2x\right)}{\psi\left(x\right)}\leq\frac{\psi\left(2Cy\right)}{\psi\left(y\right)}.\]

\end{stdefn}
We refer to Proposition 4.6 of \cite{QUEF-LI-LE-RO-PI} to verify
that we have the following:
\begin{stprop}
Let $\psi$ be an Orlicz function. Then $\psi$ satisfies the $\nabla_{0}$-Condition
if and only if there exists $x_{0}>0$ such that for every (or equivalently
\emph{one}) $\beta>1$, there exists a constant $C_{\beta}\geq1$
such that\[
\frac{\psi\left(\beta x\right)}{\psi\left(x\right)}\leq\frac{\psi\left(\beta C_{\beta}y\right)}{\psi\left(y\right)}\]
for every $x_{0}\leq x\leq y$.
\end{stprop}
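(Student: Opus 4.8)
The plan is to deduce everything from the $\nabla_{0}$-type inequality applied to itself, i.e. iterated. Two of the equivalences are immediate: taking $\beta=2$ in the displayed inequality one recovers exactly the $\nabla_{0}$-Condition, so ``$\psi$ satisfies $\nabla_{0}$'' and ``the displayed inequality holds for $\beta=2$'' are the same statement; and ``for every $\beta>1$'' trivially implies ``for one $\beta>1$''. Hence the only thing to prove is the implication: if the displayed inequality holds for \emph{some} value $\beta_{0}>1$ (with some $x_{0}>0$ and some $C=C_{\beta_{0}}\geq1$), then it holds for \emph{every} $\beta>1$, with the same $x_{0}$.

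The first step is to establish the inequality for the powers $\beta=\beta_{0}^{\,n}$, $n\geq1$. Write the hypothesis in the form $\psi(\beta_{0}u)/\psi(u)\leq\psi(\beta_{0}Cv)/\psi(v)$, valid for all $x_{0}\leq u\leq v$, and apply it for $j=0,\dots,n-1$ with $u=\beta_{0}^{\,j}x$ and $v=(\beta_{0}C)^{\,j}y$; the constraints $u\geq x_{0}$, $v\geq x_{0}$ and $u\leq v$ all follow from $x_{0}\leq x\leq y$ together with $C\geq1$. Since the numerator argument on step $j$ is $\beta_{0}C\cdot(\beta_{0}C)^{\,j}y=(\beta_{0}C)^{\,j+1}y$, which equals the denominator argument on step $j+1$, multiplying the $n$ inequalities makes both sides telescope and yields
\[
\frac{\psi(\beta_{0}^{\,n}x)}{\psi(x)}\leq\frac{\psi\bigl((\beta_{0}C)^{\,n}y\bigr)}{\psi(y)}=\frac{\psi(\beta_{0}^{\,n}C^{\,n}y)}{\psi(y)}\qquad(x_{0}\leq x\leq y),
\]
so the displayed inequality holds for $\beta=\beta_{0}^{\,n}$ with constant $C^{\,n}$.

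The second step passes to an arbitrary $\beta>1$ using only that $\psi$ is non-decreasing. Given $\beta$, pick $n\geq1$ with $\beta_{0}^{\,n}\geq\beta$ (possible since $\beta_{0}>1$). Then for $x_{0}\leq x\leq y$,
\[
\frac{\psi(\beta x)}{\psi(x)}\leq\frac{\psi(\beta_{0}^{\,n}x)}{\psi(x)}\leq\frac{\psi(\beta_{0}^{\,n}C^{\,n}y)}{\psi(y)}=\frac{\psi\bigl(\beta\cdot(\beta_{0}^{\,n}C^{\,n}/\beta)\,y\bigr)}{\psi(y)},
\]
so the inequality holds for $\beta$ with $C_{\beta}:=\beta_{0}^{\,n}C^{\,n}/\beta$, and $C_{\beta}\geq1$ because $\beta_{0}^{\,n}C^{\,n}\geq\beta_{0}^{\,n}\geq\beta$. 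Taking $\beta_{0}=2$ in this implication gives ``$\nabla_{0}\Rightarrow$ for every $\beta$'', and taking $\beta_{0}$ to be the distinguished value gives ``for one $\beta\Rightarrow$ for every $\beta$''; together with the two trivial implications noted above, this closes the chain of equivalences.

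I expect the only subtle point to be the bookkeeping in the telescoping step, and in particular the choice $v=(\beta_{0}C)^{\,j}y$ (rather than the naive $v=\beta_{0}^{\,j}y$), which is precisely what forces the right-hand product to collapse and produces the constant $C^{\,n}$; apart from this, the argument uses nothing beyond monotonicity of the Orlicz function.
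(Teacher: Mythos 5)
Your proof is correct. The two trivial reductions (the case $\beta=2$ \emph{is} the $\nabla_{0}$-Condition, and ``every $\beta$'' implies ``one $\beta$'') are identified properly, and the main implication is carried out without gaps: at step $j$ the choices $u=\beta_{0}^{\,j}x$, $v=(\beta_{0}C)^{\,j}y$ satisfy $x_{0}\leq u\leq v$ because $C\geq1$ and $x\leq y$, all the ratios are positive (an Orlicz function as defined here is strictly convex, hence positive on $(0,\infty)$), so the product of the $n$ inequalities legitimately telescopes to $\psi(\beta_{0}^{\,n}x)/\psi(x)\leq\psi(\beta_{0}^{\,n}C^{\,n}y)/\psi(y)$, and the passage to an arbitrary $\beta\leq\beta_{0}^{\,n}$ by monotonicity, with $C_{\beta}=\beta_{0}^{\,n}C^{\,n}/\beta\geq1$ and the same $x_{0}$, respects the quantifier order in the statement ($x_{0}$ uniform in $\beta$). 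For comparison: the paper gives no proof of this proposition at all, referring instead to Proposition 4.6 of Lef\`evre--Li--Queff\'elec--Rodr\'iguez-Piazza, so your argument supplies a self-contained justification of the cited fact; it is the natural elementary iteration argument, using nothing beyond the hypothesis applied along geometric sequences and the monotonicity of $\psi$.
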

Furthermore, the following class will be of interest for us: $\psi$
satisfies the \emph{uniform} $\nabla_{0}$-Condition if it satisfies
the $\nabla_{0}$-Condition for a constant $C_{\beta}\geq1$ independent
of $\beta>1$.

\smallskip{}
\item Finally, one defines a class of Orlicz functions which grow
fast:
\begin{stdefn}
\label{def|def_cond_delta_up_2}Let $\psi$ be an Orlicz function.
$\psi$ satisfies the $\Delta^{2}$-Condition if and only if there
exist $x_{0}>0$ and a constant $C>0$, such that\[
\psi\left(x\right)^{2}\leq\psi\left(Cx\right),\]
for every $x\geq x_{0}$.
\end{stdefn}
The convexity and the non-decrease of Orlicz functions give the following
proposition, whose content can be found in \cite[Chapter II, Paragraph 2.5, pages 40 and further]{RAO-REN}
or in \cite[Chapter I, Section 6, Paragraph 5]{KRASNO-RUT}:
\selectlanguage{french}%
\begin{stprop}
\label{prop|prop_equiv_delta_up_2}\foreignlanguage{english}{Let $\psi$
be an Orlicz function. The assertions:}
\selectlanguage{english}%
\begin{enumerate}
\item $\psi$ satisfies the $\Delta^{2}$-Condition;
\item There exist $b>1$, $C>0$ and $x_{0}>0$ such that $\psi\left(x\right)^{b}\leq\psi\left(Cx\right)$,
for every $x\geq x_{0}$;
\item For every $b>1$, there exist $C_{b}>0$ and $x_{0,b}>0$ such that
$\psi\left(x\right)^{b}\leq\psi\left(C_{b}x\right)$, for every $x\geq x_{0,b}$.
\end{enumerate}
are equivalent.
\end{stprop}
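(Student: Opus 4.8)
The plan is to establish the cycle $(3)\Rightarrow(1)\Rightarrow(2)\Rightarrow(3)$. The first two implications are immediate: assertion (3) applied with the exponent $b=2$ is exactly (1), and (1) is the special case $b=2$ of (2). Hence the only thing to prove is $(2)\Rightarrow(3)$, and this is a plain iteration argument.

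So assume (2): there are $b>1$, $C>0$ and $x_{0}>0$ with $\psi(x)^{b}\le\psi(Cx)$ for every $x\ge x_{0}$. Two harmless normalizations come first. Since $\psi$ is non-decreasing, replacing $C$ by $\max(C,1)$ only enlarges the right-hand side, so we may assume $C\ge1$; and since $\psi(x)/x\to\infty$ forces $\psi(x)\to\infty$, there is $x_{1}\ge x_{0}$ with $\psi(x)\ge1$ for all $x\ge x_{1}$. I would then show by induction on $n\ge1$ that
\[
\psi(x)^{b^{n}}\le\psi\!\left(C^{n}x\right)\qquad\text{for every }x\ge x_{0}.
\]
The case $n=1$ is the hypothesis. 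For the inductive step, $\psi(x)^{b^{n+1}}=\bigl(\psi(x)^{b^{n}}\bigr)^{b}\le\psi(C^{n}x)^{b}$, and since $C\ge1$ and $x\ge x_{0}$ we have $C^{n}x\ge x_{0}$, so (2) applied at the point $C^{n}x$ gives $\psi(C^{n}x)^{b}\le\psi\!\left(C\cdot C^{n}x\right)=\psi\!\left(C^{n+1}x\right)$, completing the induction.

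To finish, fix an arbitrary $b'>1$. Because $b>1$, we may choose $n$ with $b^{n}\ge b'$; then for $x\ge x_{1}$ we have $\psi(x)\ge1$, whence $\psi(x)^{b'}\le\psi(x)^{b^{n}}\le\psi\!\left(C^{n}x\right)$ by the displayed estimate. This is (3) with $C_{b'}=C^{n}$ and $x_{0,b'}=x_{1}$. I do not anticipate a genuine obstacle here: the proof is elementary, and the only delicate points are the reduction to $C\ge1$ (so that the iterated dilations $C^{n}x$ remain in the range $x\ge x_{0}$ where (2) may be used) and the choice of the threshold $x_{1}$ past which $\psi\ge1$, which is what lets one dominate a small power of $\psi$ by a larger one.
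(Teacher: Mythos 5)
Your proof is correct: the cycle $(3)\Rightarrow(1)\Rightarrow(2)\Rightarrow(3)$ is well chosen, the normalization $C\ge 1$ legitimately keeps the iterated points $C^{n}x$ in the range where (2) applies, the induction $\psi(x)^{b^{n}}\le\psi(C^{n}x)$ is sound, and the threshold $x_{1}$ with $\psi\ge 1$ is exactly what is needed to dominate $\psi(x)^{b'}$ by $\psi(x)^{b^{n}}$. Note, however, that the paper does not prove this proposition at all: it simply refers the reader to Rao--Ren (Chapter II, Paragraph 2.5) and Krasnosel'ski\u{\i}--Rutickii, remarking only that convexity and monotonicity are the ingredients. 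Your elementary iteration argument is essentially the standard one found in those references, so what you have done is supply, self-contained, the proof the paper outsources; nothing in your write-up conflicts with the paper, and no convexity beyond monotonicity and $\psi(x)\to\infty$ is actually needed for this particular equivalence.
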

\selectlanguage{english}%
The next proposition (\cite[Chapter II, Paragraph 2, Proposition 6]{RAO-REN})
shows that an Orlicz function which satisfies the $\Delta^{2}$-Condition
need to have at least an exponential growth.
\selectlanguage{french}%
\begin{stprop}
\label{prop|estim_croiss_delta_up_2}\foreignlanguage{english}{Let
$\psi$ be an Orlicz function which satisfies the $\Delta^{2}$-Condition.
There exist $a>0$ and $x_{0}>0$ such that\[
\psi\left(x\right)\geq e^{ax},\]
for every $x\geq x_{0}$.}
\end{stprop}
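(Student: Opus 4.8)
The plan is to iterate the $\Delta^{2}$-Condition so as to get a doubly-exponential lower bound for $\psi$ along a geometric progression, and then to propagate that bound to every large $x$ using that $\psi$ is non-decreasing. Two harmless reductions come first. Since $\psi(x)/x\to+\infty$ we have in particular $\psi(x)\to+\infty$, so after enlarging $x_{0}$ we may assume $\psi(x_{0})>1$ and write $\psi(x_{0})=e^{\kappa}$ with $\kappa>0$. Moreover the constant $C$ in \prettyref{def|def_cond_delta_up_2} must satisfy $C>1$: if $C\le 1$ then, $\psi$ being non-decreasing, $\psi(x)^{2}\le\psi(Cx)\le\psi(x)$ for $x\ge x_{0}$, forcing $\psi(x)\le 1$ there, in contradiction with $\psi(x)\to+\infty$.

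For the core step, note that $C^{k}x_{0}\ge x_{0}$ for every $k\ge 0$, so we may apply $\psi(x)^{2}\le\psi(Cx)$ successively at $x_{0},Cx_{0},C^{2}x_{0},\dots$, which gives
\[
\psi\!\left(C^{n}x_{0}\right)\ \ge\ \psi\!\left(C^{n-1}x_{0}\right)^{2}\ \ge\ \cdots\ \ge\ \psi(x_{0})^{2^{n}}\ =\ e^{\kappa 2^{n}}\qquad(n\ge 0).
\]
Given $x\ge x_{0}$, let $n\ge 0$ be the integer with $C^{n}x_{0}\le x<C^{n+1}x_{0}$; then $n>\log_{C}(x/x_{0})-1$, hence $2^{n}\ge\tfrac12\,(x/x_{0})^{\log_{C}2}$, and the monotonicity of $\psi$ yields
\[
\psi(x)\ \ge\ \psi\!\left(C^{n}x_{0}\right)\ \ge\ \exp\!\big(\kappa\,2^{n}\big)\ \ge\ \exp\!\Big(\tfrac{\kappa}{2}\,(x/x_{0})^{\log_{C}2}\Big).
\]
If the constant has been arranged so that $\log_{C}2\ge 1$, i.e. $C\le 2$, then $(x/x_{0})^{\log_{C}2}\ge x/x_{0}$ for $x\ge x_{0}$ and we conclude $\psi(x)\ge e^{ax}$ with $a=\kappa/(2x_{0})$, as desired.

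The point requiring genuine care — and the one I expect to be the main obstacle — is precisely this reduction to a convenient value of $C$: the raw iteration only produces a stretched-exponential bound $\exp(cx^{\gamma})$ with $\gamma=\log_{C}2$, which is a true exponential only when $C\le2$. To obtain the linear rate in general one should first rewrite the hypothesis in a more economical form, exploiting the convexity of $\psi$ (in particular that $t\mapsto\psi(t)/t$ is non-decreasing) together with the equivalences of \prettyref{prop|prop_equiv_delta_up_2}, so that one may iterate with a multiplicative constant not exceeding $2$; everything else is the elementary computation carried out above.
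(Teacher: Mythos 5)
Your iteration and the two preliminary reductions are correct, and in the special case $C\le 2$ your computation does prove the statement; but the step you defer to the end --- replacing the constant $C$ of the $\Delta^{2}$-Condition by a constant at most $2$ while keeping the exponent $2$ --- is the whole difficulty, and it cannot be carried out along the lines you indicate. \prettyref{prop|prop_equiv_delta_up_2} only lets you trade the exponent: for each $b>1$ it gives $\psi(x)^{b}\le\psi\left(C_{b}x\right)$ with a constant $C_{b}$ over which you have no control, whereas your computation needs an inequality in which the exponent dominates the constant (for instance exponent $2$ with constant $\le2$); convexity does not supply this either, since for $\lambda\ge1$ it yields $\psi(\lambda x)\ge\lambda\psi(x)$, an inequality going the wrong way for lowering the constant inside $\psi$. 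Concretely, take $\psi$ equal to $e^{\sqrt{x}}-1$ for $x\ge4$, suitably convexified near the origin: then $\psi(x)^{2}=e^{2\sqrt{x}}-2e^{\sqrt{x}}+1\le e^{2\sqrt{x}}-1=\psi(4x)$, so this $\psi$ satisfies the condition of \prettyref{def|def_cond_delta_up_2} with $C=4$ and $x_{0}=4$, yet no inequality $\psi(x)^{2}\le\psi\left(C'x\right)$ with $C'\le2$ can hold at infinity, and $\psi(x)\ge e^{ax}$ fails for every $a>0$. In other words, from the hypothesis exactly as stated your intermediate bound $\psi(x)\ge\exp\bigl(a\,x^{\log_{C}2}\bigr)$ is optimal, and the missing reduction is not a technical detail to be filled in but an actual obstruction.

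For comparison, the paper gives no argument of its own: it invokes \cite[Chapter II, Paragraph 2, Proposition 6]{RAO-REN}, so there is no internal proof against which your reduction could be checked. The example above shows that the exponential lower bound requires the hypothesis in the precise form used in that reference (or else the conclusion should be weakened to $\psi(x)\ge\exp\left(ax^{b}\right)$ for some $a,b>0$, which is what your argument genuinely establishes). You were right to single out the passage from general $C$ to $C\le2$ as the main obstacle; as written, though, the proof is incomplete, and it cannot be completed by the route you sketch.
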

\selectlanguage{english}%
If $\psi$ satisfies $\Delta^{2}$-Condition, we shall say that $L^{\psi}\left(\Omega\right)$
is a {}``small'' Orlicz space, i.e. {}``far'' from any $L^{p}\left(\Omega\right)$
and {}``close'' to $L^{\infty}$.\end{myitemize}\smallskip{}
To finish, we recall Proposition 4.7 (2) of \cite{QUEF-LI-LE-RO-PI}:
\begin{stprop}
\label{prop|nabla_0_implies_nabla_2}Let $\psi$ be an Orlicz function.
If $\psi$ satisfies the $\Delta^{2}$-Condition, then it satisfies
the uniform $\nabla_{0}$-Condition.
\end{stprop}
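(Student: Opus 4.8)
The plan is to verify directly the equivalent formulation of the $\nabla_{0}$‑Condition recorded right after \prettyref{def|def_nabla_0_cond}, using as the multiplicative constant exactly the one supplied by the $\Delta^{2}$‑Condition; since that constant is absolute, this will at once give the \emph{uniform} $\nabla_{0}$‑Condition. So I would fix $C>0$ and $x_{1}>0$ with $\psi\left(t\right)^{2}\leq\psi\left(Ct\right)$ for all $t\geq x_{1}$, assume $C\geq1$ (replacing $C$ by $\max(C,1)$ otherwise), and enlarge $x_{1}$ once and for all so that moreover $\psi\left(x_{1}\right)\geq1$. The claim to be established is then
\[
\frac{\psi\left(\beta x\right)}{\psi\left(x\right)}\leq\frac{\psi\left(\beta Cy\right)}{\psi\left(y\right)}
\]
for every $\beta>1$ and every $x_{1}\leq x\leq y$.

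The first step is pure convexity. Since $\psi$ is convex with $\psi\left(0\right)=0$, one has $\psi\left(\lambda t\right)\leq\lambda\psi\left(t\right)$ for $0\leq\lambda\leq1$, and consequently $t\mapsto\psi\left(t\right)/t$ is non‑decreasing. Writing $\beta x=(x/y)\,\beta y$ with $x/y\in(0,1]$ I get $\psi\left(\beta x\right)\leq(x/y)\,\psi\left(\beta y\right)$, hence
\[
\frac{\psi\left(\beta x\right)}{\psi\left(x\right)}\leq\frac{x}{\psi\left(x\right)}\cdot\frac{\psi\left(\beta y\right)}{y}.
\]
Then I would bound the two factors on the right separately. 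For the first, the monotonicity of $t\mapsto\psi\left(t\right)/t$ together with $\psi\left(x_{1}\right)\geq1$ gives $\frac{x}{\psi\left(x\right)}\leq\frac{x_{1}}{\psi\left(x_{1}\right)}\leq x_{1}\leq y$. For the second, applying the $\Delta^{2}$‑Condition at the point $\beta y\geq x_{1}$ gives $\psi\left(\beta Cy\right)=\psi\left(C\cdot\beta y\right)\geq\psi\left(\beta y\right)^{2}$, so that, using $\psi\left(\beta y\right)\geq\psi\left(y\right)$ because $\beta\geq1$,
\[
\psi\left(\beta y\right)\leq\frac{\psi\left(\beta Cy\right)}{\psi\left(\beta y\right)}\leq\frac{\psi\left(\beta Cy\right)}{\psi\left(y\right)}.
\]
Substituting both bounds into the previous display yields
\[
\frac{\psi\left(\beta x\right)}{\psi\left(x\right)}\leq y\cdot\frac{1}{y}\cdot\frac{\psi\left(\beta Cy\right)}{\psi\left(y\right)}=\frac{\psi\left(\beta Cy\right)}{\psi\left(y\right)},
\]
which is exactly the claim, with a constant independent of $\beta$.

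I do not expect a genuine obstacle; the only thing that needs a moment's thought is \emph{why} the statement holds, i.e. how the two hypotheses cooperate: convexity lets one extract the small factor $x/y\leq1$ at the price of a leftover factor $\psi\left(y\right)/y$ (which on its own tends to $+\infty$), while the $\Delta^{2}$‑Condition, invoked at the \emph{large} argument $\beta y$ rather than at $y$, manufactures an additional multiplicative factor at least $\psi\left(\beta y\right)\geq\psi\left(y\right)$ that exactly soaks up that leftover. What remains is bookkeeping: the innocuous normalisation $\psi\left(x_{1}\right)\geq1$ (used only to turn $x/\psi\left(x\right)$ into something $\leq y$), and the observation that one may take the $\nabla_{0}$‑constant equal to the $\Delta^{2}$‑constant itself, which is precisely what makes it independent of $\beta$. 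One could instead run the same computation starting from any of the equivalent forms of the $\Delta^{2}$‑Condition recorded in \prettyref{prop|prop_equiv_delta_up_2}, but the route above is the most direct.
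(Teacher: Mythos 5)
Your argument is correct, and it is in fact more than the paper offers: for this proposition the paper gives no proof at all, it simply recalls the statement from Proposition 4.7 (2) of \cite{QUEF-LI-LE-RO-PI}. Your computation is a legitimate self-contained substitute, and it proves exactly the right formulation (the inequality $\psi\left(\beta x\right)/\psi\left(x\right)\leq\psi\left(\beta Cy\right)/\psi\left(y\right)$ for \emph{all} $\beta>1$ with a single constant $C\geq1$, which is the uniform $\nabla_{0}$-Condition as defined just after the equivalent reformulation of \prettyref{def|def_nabla_0_cond}). The individual steps all check out: convexity with $\psi\left(0\right)=0$ gives $\psi\left(\lambda t\right)\leq\lambda\psi\left(t\right)$ for $\lambda\in\left[0,1\right]$, hence the splitting $\psi\left(\beta x\right)\leq\left(x/y\right)\psi\left(\beta y\right)$; the normalisation $\psi\left(x_{1}\right)\geq1$ (available since $\psi\left(x\right)\rightarrow\infty$) makes the bound $x/\psi\left(x\right)\leq x_{1}\leq y$ legitimate and also guarantees the positivity needed for all the divisions; replacing the $\Delta^{2}$-constant by $\max\left(C,1\right)$ is harmless because $\psi$ is non-decreasing; and applying the $\Delta^{2}$-inequality at the large argument $\beta y\geq x_{1}$ produces the factor $\psi\left(\beta y\right)\geq\psi\left(y\right)$ that absorbs the leftover $\psi\left(\beta y\right)/y$ against the crude bound $x/\psi\left(x\right)\leq y$, with a constant visibly independent of $\beta$. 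The only comparison to make is with the cited source rather than with the paper itself; your route (convexity plus $\Delta^{2}$ invoked at $\beta y$) is elementary and direct, and could stand in the text in place of the citation.
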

\smallskip{}
Let us notice that for any $1<p<\infty$, every function $x\longmapsto x^{p}$
is an Orlicz function which satisfies the uniform $\nabla_{0}$-Condition,
and then the $\nabla_{0}$-condition. It also satisfies the $\Delta_{2}$-Condition.
Furthermore, for any $a>0$ and $b\geq1$, $x\longmapsto e^{ax^{b}}-1$
belongs to the $\Delta^{2}$-Class (and then to the uniform $\nabla_{0}$-Class),
yet not to the $\Delta_{2}$-one. In addition, the Orlicz functions
which can be written $x\rightarrow\exp\left(a\left(\ln\left(x+1\right)\right)^{b}\right)-1$
for $a>0$ and $b\geq1$, satisfy the $\nabla_{0}$-Condition, but
do not belong to the $\Delta^{2}$-Class.

\smallskip{}
For a complete study of Orlicz spaces, we refer to \cite{KRASNO-RUT}
and to \cite{RAO-REN}. We can also find precise information in context
of composition operators, such as other classes of Orlicz functions
and their link together with, in \cite{QUEF-LI-LE-RO-PI}.

\subsection{Weighted Bergman-Orlicz spaces on $\mathbb{B}_{N}$}

Let $\alpha>-1$ and let $dv_{\alpha}$ be the normalized weighted
Lebesgue measure on $\mathbb{B}_{N}$\[
dv_{\alpha}\left(z\right)=c_{\alpha}\left(1-\left|z\right|^{2}\right)^{\alpha}dv\left(z\right),\]
where $dv$ is the normalized volume Lebesgue measure on $\mathbb{B}_{N}$.
The constant $c_{\alpha}$ is equal to\[
c_{\alpha}=\frac{\Gamma\left(n+\alpha+1\right)}{n!\Gamma\left(\alpha+1\right)}.\]
With the notations of the previous subsection, if $\left(\Omega,\mathbb{P}\right)=\left(\mathbb{B}_{N},dv_{\alpha}\right)$,
then the weighted Bergman-Orlicz space $A_{\alpha}^{\psi}\left(\mathbb{B}_{N}\right)$
on the ball is $H\left(\mathbb{B}_{N}\right)\cap L_{\alpha}^{\psi}\left(\mathbb{B}_{N}\right)$,
where $H\left(\mathbb{B}_{N}\right)$ is the space of holomorphic
functions on $\mathbb{B}_{N}$, and where the subscript $\alpha$
remains that the probabilistic measure is the weighted normalized
measure $dv_{\alpha}$ on $\mathbb{B}_{N}$. We have $A_{\alpha}^{\psi}\left(\mathbb{B}_{N}\right)\subset A_{\alpha}^{1}\left(\mathbb{B}_{N}\right)$
and it is classical to check that, if $A_{\alpha}^{\psi}\left(\mathbb{B}_{N}\right)$
is endowed with the Luxemburg norm $\left\Vert .\right\Vert _{\psi}$,
then it is a Banach space.

\smallskip{}
For $a\in\mathbb{B}_{N}$, we denote by $\delta_{a}$ the point evaluation
functional at $a$. The following proposition infers that $\delta_{a}$
is bounded on every $A_{\alpha}^{\psi}\left(\mathbb{B}_{N}\right)$.
\begin{stprop}
\label{prop|point_eval_func_Bergman}Let $\alpha>-1$ and let $\psi$
be an Orlicz function. Let also $a\in\mathbb{B}_{N}$. Then the point
evaluation functional $\delta_{a}$ at $a$ is bounded on $A_{\alpha}^{\psi}\left(\mathbb{B}_{N}\right)$;
more precisely, we have\[
\frac{1}{4^{N+1+\alpha}}\psi^{-1}\left(\left(\frac{1+\left|a\right|}{1-\left|a\right|}\right)^{N+1+\alpha}\right)\leq\left\Vert \delta_{a}\right\Vert \leq\psi^{-1}\left(\left(\frac{1+\left|a\right|}{1-\left|a\right|}\right)^{N+1+\alpha}\right).\]
\end{stprop}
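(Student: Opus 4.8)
The plan is to estimate the norm of $\delta_a$ by testing it against a single well-chosen family of functions and, for the reverse inequality, by using the known pointwise growth of functions in $A_\alpha^1(\mathbb{B}_N)$ together with the inclusion $A_\alpha^\psi \subset A_\alpha^1$. Recall that for $a \in \mathbb{B}_N$ the test function to keep in mind is $f_a(z) = \bigl((1-\langle z,a\rangle)^{-(N+1+\alpha)}\bigr)/c$ for a suitable normalizing constant $c$; the point is that $\int_{\mathbb{B}_N} |1-\langle z,a\rangle|^{-(N+1+\alpha)}\,dv_\alpha(z)$ behaves like $\log\frac{1}{1-|a|^2}$ or is bounded, which is not quite what we want — so instead one raises to a slightly larger power. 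Concretely, I would take $g_a(z) = \bigl(1-\langle z,a\rangle\bigr)^{-t}$ with $t$ chosen so that $\int_{\mathbb{B}_N}|g_a|\,dv_\alpha \asymp (1-|a|^2)^{-(t - (N+1+\alpha))}$ (valid for $t > N+1+\alpha$), and then control the Luxemburg norm of $g_a$ from this $L^1_\alpha$-information. However, since $\psi$ is only assumed to be an arbitrary Orlicz function, estimating $\|g_a\|_\psi$ from $\|g_a\|_1$ alone is too lossy; the cleaner route is to test with the \emph{extremal-type} function whose modulus is essentially constant equal to $\psi^{-1}\bigl(((1+|a|)/(1-|a|))^{N+1+\alpha}\bigr)$ on a Carleson-type ``window'' near $a/|a|$ of the right size, so that $\int \psi(|g_a|)\,dv_\alpha \le 1$ by design while $|g_a(a)|$ is of the claimed order.

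For the \textbf{lower bound}, I would fix the non-isotropic ``tent'' $S(a) = \{z \in \mathbb{B}_N : |1 - \langle z, a/|a|\rangle| < 1-|a|\}$ (for $a \ne 0$; the case $a=0$ is trivial and handled separately), which has $v_\alpha$-measure comparable to $(1-|a|)^{N+1+\alpha}$. The natural test function is a power of $(1-\langle z,a\rangle)^{-1}$, rescaled: set $h_a(z) = \lambda \,(1-\langle z,a\rangle)^{-(N+1+\alpha)}$ where $\lambda$ is chosen precisely so that $\int_{\mathbb{B}_N} \psi(|h_a|)\,dv_\alpha = 1$, giving $\|h_a\|_\psi \le 1$. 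Since $|h_a(a)| = \lambda (1-|a|^2)^{-(N+1+\alpha)}$, the task reduces to bounding $\lambda$ from below by $\frac{1}{4^{N+1+\alpha}}\psi^{-1}\bigl(((1+|a|)/(1-|a|))^{N+1+\alpha}\bigr)\cdot(1-|a|^2)^{N+1+\alpha}$. This in turn follows by splitting $\mathbb{B}_N = S(a) \cup (\mathbb{B}_N \setminus S(a))$: on $\mathbb{B}_N\setminus S(a)$ one uses $|1-\langle z,a\rangle| \gtrsim 1-|a|^2$ together with a standard integral estimate so that the contribution of $\int \psi(|h_a|)$ there is controlled; on $S(a)$, $|1-\langle z,a\rangle| \le |1-\langle z,a/|a|\rangle| + (1-|a|) \le 2(1-|a|)$, hence $|h_a| \ge \lambda (2(1-|a|))^{-(N+1+\alpha)}$, and convexity/monotonicity of $\psi$ plus $v_\alpha(S(a)) \asymp (1-|a|)^{N+1+\alpha}$ force the claimed lower bound on $\lambda$ (the factor $4^{N+1+\alpha}$ absorbs the constant $2^{N+1+\alpha}$ and the measure constants). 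Here $\|\delta_a\| \ge |h_a(a)| / \|h_a\|_\psi \ge |h_a(a)|$ finishes it.

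For the \textbf{upper bound}, the key input is the standard pointwise estimate $|f(a)| \le C_{N,\alpha}\,(1-|a|^2)^{-(N+1+\alpha)}\,\|f\|_{A^1_\alpha}$, valid on $A_\alpha^1(\mathbb{B}_N)$; combined with $\|f\|_{A^1_\alpha} \le \|f\|_\psi$ (a consequence of $L^\psi \subset L^1$ with norm-one inclusion on a probability space) this already gives a bound of the form $|f(a)| \lesssim (1-|a|^2)^{-(N+1+\alpha)}\|f\|_\psi$. To upgrade this to the sharp constant $\psi^{-1}\bigl(((1+|a|)/(1-|a|))^{N+1+\alpha}\bigr)$, I would instead use the subharmonicity of $\psi(|f|/\|f\|_\psi)$: by Jensen's inequality applied to the sub-mean-value property of $|f|$ on a Bergman ball $D(a,r)$ centered at $a$, one gets $\psi\bigl(|f(a)|/\|f\|_\psi\bigr) \le \frac{1}{v_\alpha(D(a,r))}\int_{D(a,r)}\psi(|f|/\|f\|_\psi)\,dv_\alpha \le \frac{1}{v_\alpha(D(a,r))} \le \bigl((1+|a|)/(1-|a|)\bigr)^{N+1+\alpha}$ once $r$ is fixed and the elementary volume bound $v_\alpha(D(a,r)) \ge \bigl((1-|a|)/(1+|a|)\bigr)^{N+1+\alpha}$ is in hand. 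Applying $\psi^{-1}$ and rearranging gives $|f(a)| \le \psi^{-1}\bigl(((1+|a|)/(1-|a|))^{N+1+\alpha}\bigr)\|f\|_\psi$, as desired.

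The \textbf{main obstacle} is pinning down the right geometric object and the right normalization so that the constants come out as stated — in particular getting the clean power $((1+|a|)/(1-|a|))^{N+1+\alpha}$ rather than a comparable but messier quantity. This hinges on two elementary but must-be-checked volume estimates: $v_\alpha\bigl(D(a,r)\bigr) \gtrsim \bigl((1-|a|)/(1+|a|)\bigr)^{N+1+\alpha}$ for the upper bound and $v_\alpha(S(a)) \asymp (1-|a|)^{N+1+\alpha}$ together with the integral $\int_{\mathbb{B}_N}|1-\langle z,a\rangle|^{-s}\,dv_\alpha(z) \asymp (1-|a|^2)^{N+1+\alpha-s}$ (for $s > N+1+\alpha$) for the lower bound. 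Once these are granted, both halves are short; everything else is convexity and monotonicity of $\psi$ and the definition of the Luxemburg norm.
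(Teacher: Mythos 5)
Your upper bound rests on the inequality $\psi\bigl(|f(a)|/\|f\|_\psi\bigr)\le\frac{1}{v_\alpha(D(a,r))}\int_{D(a,r)}\psi\bigl(|f|/\|f\|_\psi\bigr)\,dv_\alpha$ with constant exactly $1$, and this is the gap: the normalized restriction of $v_\alpha$ to a Bergman ball centered at $a\neq0$ is not a sub-mean-value (Jensen) measure for the point $a$, because the weight $(1-|z|^2)^\alpha$ is not radial about $a$; the inequality only holds up to a constant $C(r,N,\alpha)>1$, and likewise your volume bound $v_\alpha(D(a,r))\ge\bigl((1-|a|)/(1+|a|)\bigr)^{N+1+\alpha}$ is only true up to an $r$-dependent constant. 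For a general Orlicz function such constants cannot be moved outside $\psi^{-1}$ without loss, so you obtain $\|\delta_a\|\le\psi^{-1}\bigl(C\,((1+|a|)/(1-|a|))^{N+1+\alpha}\bigr)$ rather than the stated bound. The paper gets the exact constant by transporting $a$ to $0$ with an automorphism $\varphi_a$: the sub-mean-value inequality at the \emph{center} with respect to the radial probability measure $v_\alpha$ is exact, and the change of variables turns the integral into $\int\psi(|f|/C)H_a\,dv_\alpha$ with the Berezin kernel $H_a$, whose sup norm is precisely $\bigl((1+|a|)/(1-|a|)\bigr)^{N+1+\alpha}$.

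The lower bound has two concrete problems. First, your test function uses the critical exponent $N+1+\alpha$: on the dyadic shells $|1-\langle z,a\rangle|\approx 2^k(1-|a|)$ the convexity bound $\psi(2^{-k(N+1+\alpha)}x)\le 2^{-k(N+1+\alpha)}\psi(x)$ exactly cancels the growth $v_\alpha\approx 2^{k(N+1+\alpha)}(1-|a|)^{N+1+\alpha}$ of the measure, so the contribution of $\mathbb{B}_N\setminus S(a)$ to $\int\psi(|h_a|)\,dv_\alpha$ is controlled only up to a factor $\log\frac{1}{1-|a|}$ (the Rudin--Forelli estimate bounds $\int|1-\langle z,a\rangle|^{-s}dv_\alpha$, not $\int\psi(\lambda|1-\langle z,a\rangle|^{-s})dv_\alpha$ for arbitrary $\psi$). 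The resulting lower bound on $\lambda$ then carries a $\log\frac{1}{1-|a|}$ inside $\psi^{-1}$, which for general $\psi$ (e.g.\ $\psi(x)=x^p$ with $p$ close to $1$) cannot be absorbed into a uniform constant such as $4^{-(N+1+\alpha)}$; you would need a strictly supercritical power, e.g.\ $2(N+1+\alpha)$, which is exactly the Berezin kernel. Second, the inequality you invoke on $S(a)$ points the wrong way: from $\int_{S(a)}\psi(|h_a|)\,dv_\alpha\le1$ and $|h_a|\ge\lambda\bigl(2(1-|a|)\bigr)^{-(N+1+\alpha)}$ on $S(a)$ one deduces an \emph{upper} bound on $\lambda$, not a lower one; to bound $\lambda$ (hence $|h_a(a)|$) from below you must bound the \emph{whole} integral $\int_{\mathbb{B}_N}\psi(|h_a|)\,dv_\alpha$ from above. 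The paper avoids all splitting by testing with $H_a$ itself and using the estimate $\|H_a\|_{A_\alpha^\psi}\le\|H_a\|_\infty/\psi^{-1}(\|H_a\|_\infty)$ (Lemma 3.9 of the Lef\`evre--Li--Queff\'elec--Rodr\'iguez-Piazza memoir), which needs only $\|H_a\|_{L^1}=1$, $\|H_a\|_\infty$ and convexity, and immediately gives $\|\delta_a\|\ge|H_a(a)|/\|H_a\|_{A_\alpha^\psi}\ge 4^{-(N+1+\alpha)}\psi^{-1}\bigl(((1+|a|)/(1-|a|))^{N+1+\alpha}\bigr)$.
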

\begin{proof}
We denote by $H_{a}$ the Berezin kernel at $a$, defined by\[
H_{a}\left(z\right)=\left(\frac{1-\left|a\right|^{2}}{\left|1-\left\langle z,a\right\rangle \right|^{2}}\right)^{N+1+\alpha},\, z\in\mathbb{B}_{N}.\]
It is not hard to check -and well-known- that $\left\Vert H_{a}\right\Vert _{\infty}=\left(\frac{1+\left|a\right|}{1-\left|a\right|}\right)^{N+1+\alpha}$
and that $\left\Vert H_{a}\right\Vert _{L^{1}}=1$. Let $\varphi_{a}$
be an automorphism of $\mathbb{B}_{N}$ such that $\varphi\left(0\right)=a$.
Fix $f\in A_{\alpha}^{\psi}\left(\mathbb{B}_{N}\right)$ and set $C=\left\Vert f\right\Vert _{A_{\alpha}^{\psi}}$.
By the change of variables formula (e.g. \cite{ZHU}, Proposition
1.13), and using the subharmonicity of ${\displaystyle \psi\left(\frac{\left|f\circ\varphi_{a}\right|}{C}\right)}$,
we get\[
\psi\left(\frac{\left|f\left(a\right)\right|}{C}\right)\leq\int_{\mathbb{B}_{N}}\psi\left(\frac{\left|f\circ\varphi_{a}\right|}{C}\right)dv_{\alpha}=\int_{\mathbb{B}_{N}}\psi\left(\frac{\left|f\left(z\right)\right|}{C}\right)H_{a}\left(z\right)dv_{\alpha}\left(z\right).\]
Since $\psi^{-1}$ is non-decreasing, we obtain\[
\left|f\left(a\right)\right|\leq C\psi^{-1}\left(\left(\frac{1+\left|a\right|}{1-\left|a\right|}\right)^{N+1+\alpha}\right),\]
hence the intended upper estimate.

Conversely, we compute $\delta_{a}\left(H_{a}\right)$. It gives\begin{eqnarray}
\left\Vert \delta_{a}\right\Vert  & \geq & \frac{\left|H_{a}\left(a\right)\right|}{\left\Vert H_{a}\right\Vert _{A_{\alpha}^{\psi}}}\nonumber \\
 & \geq & \frac{1}{\left(1-\left|a\right|^{2}\right)^{N+1+\alpha}}\frac{\psi^{-1}\left(\left\Vert H_{a}\right\Vert _{\infty}\right)}{\left\Vert H_{a}\right\Vert _{\infty}}\mbox{ (by \cite[Lemma 3.9]{QUEF-LI-LE-RO-PI})}\nonumber \\
 & \geq & \frac{1}{4^{N+1+\alpha}}\psi^{-1}\left(\left(\frac{1+\left|a\right|}{1-\left|a\right|}\right)^{N+1+\alpha}\right).\label{eq|calc_norm_Berezin_Apsi}\end{eqnarray}

\end{proof}

\section{\label{sub|Berg_Emb_Thm}Embedding Theorems for Bergman-Orlicz spaces}

We will need a version of Carleson's theorem for Bergman spaces slightly
different from the traditional one. This is inspired from \cite{QUEF-LI-LEFEVRE-BERGMAN-ORLICZ}.
Anyway, as for the study of continuity and compactness of composition
operators on Bergman spaces or Hardy spaces of the ball in terms of
Carleson measure, we will need to introduce the objects and notions
involved. We first recall the definition of the non-isotropic distance
on the sphere $\mathbb{S}_{N}$, which we denote by $d$. For $\left(\zeta,\xi\right)\in\mathbb{S}_{N}^{2}$,
it is given by\[
d\left(\zeta,\xi\right)=\sqrt{\left|1-\left\langle \zeta,\xi\right\rangle \right|}.\]
We may verify that the map $d$ is a distance on $\mathbb{S}_{N}$
and can be extended to $\overline{\mathbb{B}_{N}}$, where it still
satisfies the triangle inequality. For $\zeta\in\overline{\mathbb{B}_{N}}$
and $h\in\left]0,1\right]$, we define the non-isotropic {}``ball''
of $\mathbb{B}_{N}$ by\[
S\left(\zeta,h\right)=\left\{ z\in\mathbb{B}_{N},\, d\left(\zeta,z\right)^{2}<h\right\} .\]
and its analogue in $\overline{\mathbb{B}_{N}}$ by\[
\mathcal{S}\left(\zeta,h\right)=\left\{ z\in\overline{\mathbb{B}_{N}},\, d\left(\zeta,z\right)^{2}<h\right\} .\]
Let us also denote by\[
Q=\mathcal{S}\left(\zeta,h\right)\cap\mathbb{S}_{N}\]
the {}``true'' balls in $\mathbb{S}_{N}$. Next, for $\zeta\in\mathbb{S}_{N}$
and $h\in\left]0,1\right]$, we define\[
W\left(\zeta,h\right)=\left\{ z\in\mathbb{B}_{N},\,1-\left|z\right|<h,\,\frac{z}{\left|z\right|}\in Q\left(\zeta,h\right)\right\} .\]
$W\left(\zeta,h\right)$ is called a Carleson window.

We introduce the two following functions $\varrho_{\mu}$ and $K_{\mu,\alpha}$:\[
\varrho_{\mu}\left(h\right)=\sup_{\xi\in\mathbb{S}_{N}}\mu\left(W\left(\xi,h\right)\right)\]
where $\mu$ is positive Borel measure on $\mathbb{B}_{N}$. We now
set\[
K_{\mu,\alpha}\left(h\right)=\sup_{0<t\leq h}\frac{\varrho_{\mu}\left(t\right)}{t^{N+1+\alpha}}.\]
$\mu$ is said to be an $\alpha$-Bergman-Carleson measure if $K_{\mu,\alpha}$
is bounded. As\begin{equation}
t^{N+1+\alpha}\sim v_{\alpha}\left(W\left(\xi,t\right)\right)\label{eq|eq_leb_mes_Carle_win_power_Berg}\end{equation}
for every $\xi\in\mathbb{S}_{N}$, this is equivalent to the existence
of a constant $C>0$ such that\[
\mu\left(W\left(\xi,h\right)\right)\leq Cv_{\alpha}\left(W\left(\xi,h\right)\right)\]
for any $\xi\in\mathbb{S}_{N}$ and any $h\in\left(0,1\right)$ (or
equivalently any $h\in\left(0,h_{A}\right)$ for some $0<h_{A}\leq1$).
Let us remark that, in the definition of $\varrho_{\mu}$ and $K_{\mu,\alpha}$,
we may have taken $S\left(\xi,h\right)$ instead of $W\left(\xi,h\right)$,
since these two sets are equivalent in the sense that there exist
two constants $C_{1}>0$ and $C_{2}>0$ such that\[
S\left(\xi,C_{1}h\right)\subset W\left(\xi,h\right)\subset S\left(\xi,C_{2}h\right).\]
Next we may work indifferently with non-isotropic balls or Carleson
windows if there is no possible confusion.

\smallskip{}
We have the following covering lemma which will be useful for our
version of Carleson's theorem:
\begin{stlem}
\label{lem|covering_lemma_Bergman}There exists an integer $M>0$
such that for any $0<r<1$, we can find a finite sequence $\left\{ \xi_{k}\right\} _{k=1}^{m}$
($m$ depending on $r$) in $\mathbb{S}_{N}$ with the following properties:
\begin{enumerate}
\item $\mathbb{S}_{N}=\bigcup_{k}Q\left(\xi_{k},r\right)$.
\item The sets $Q\left(\xi_{k},r/4\right)$ are mutually disjoint.
\item Each point of $\mathbb{S}_{N}$ belongs to at most $M$ of the sets
$Q\left(\xi_{k},4r\right)$.
\end{enumerate}
\end{stlem}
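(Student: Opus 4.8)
The plan is to run a standard Vitali-type maximal-family argument adapted to the space of homogeneous type $\left(\mathbb{S}_{N},d,\sigma\right)$, where $\sigma$ denotes the surface measure on $\mathbb{S}_{N}$. Fix $0<r<1$ and consider families of points of $\mathbb{S}_{N}$ for which the balls $Q\left(\cdot,r/4\right)$ are pairwise disjoint. Since it is classical that $\sigma\left(Q\left(\xi,h\right)\right)\sim h^{N}$ uniformly in $\xi\in\mathbb{S}_{N}$ (equivalently, $\sigma$ is doubling with respect to the balls $Q\left(\cdot,h\right)$; see e.g. \cite{ZHU}), any such disjoint family has cardinality at most $\sigma\left(\mathbb{S}_{N}\right)/\left(c_{N}\left(r/4\right)^{N}\right)<\infty$; hence a family of maximal cardinality exists and is finite. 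Enumerate it as $\left\{ \xi_{k}\right\} _{k=1}^{m}$. Property (2) then holds by construction.

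For property (1), fix $z\in\mathbb{S}_{N}$. By maximality of the family, $Q\left(z,r/4\right)$ must meet some $Q\left(\xi_{k},r/4\right)$ (otherwise $\left\{ \xi_{k}\right\} \cup\left\{ z\right\}$ would be a larger disjoint family), so there is $w$ with $d\left(z,w\right)^{2}<r/4$ and $d\left(\xi_{k},w\right)^{2}<r/4$. The triangle inequality for $d$ on $\mathbb{S}_{N}$ gives $d\left(z,\xi_{k}\right)\leq d\left(z,w\right)+d\left(w,\xi_{k}\right)<\sqrt{r}$, i.e. $z\in Q\left(\xi_{k},r\right)$. Thus $\mathbb{S}_{N}=\bigcup_{k}Q\left(\xi_{k},r\right)$.

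For property (3), fix $z\in\mathbb{S}_{N}$ and let $I=\left\{ k:\, z\in Q\left(\xi_{k},4r\right)\right\}$. For $k\in I$ we have $d\left(z,\xi_{k}\right)<2\sqrt{r}$, so the triangle inequality yields both $Q\left(\xi_{k},r/4\right)\subset Q\left(z,25r/4\right)$ and $Q\left(z,r/4\right)\subset Q\left(\xi_{k},25r/4\right)$. Using the uniform estimate $\sigma\left(Q\left(\xi,h\right)\right)\sim h^{N}$ (applied at the scales $r/4$ and $25r/4$, which differ by a bounded factor, $25/4<2^{3}$), the second inclusion gives $\sigma\left(Q\left(z,r/4\right)\right)\leq C\,\sigma\left(Q\left(\xi_{k},r/4\right)\right)$ with $C$ depending only on $N$; summing over $k\in I$ and using the disjointness of the $Q\left(\xi_{k},r/4\right)$ together with the first inclusion,
\[
\frac{\left|I\right|}{C}\,\sigma\left(Q\left(z,r/4\right)\right)\leq\sum_{k\in I}\sigma\left(Q\left(\xi_{k},r/4\right)\right)\leq\sigma\left(Q\left(z,25r/4\right)\right)\leq C'\,\sigma\left(Q\left(z,r/4\right)\right),
\]
whence $\left|I\right|\leq CC'$, a bound depending only on $N$. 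Taking $M=\left\lfloor CC'\right\rfloor$ finishes the proof.

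The only mildly delicate point is property (3), and there the essential ingredient is the uniform comparison $\sigma\left(Q\left(\xi,h\right)\right)\sim h^{N}$ (a homogeneous-space bound on the sphere); everything else is the routine maximal-family mechanism. Since $r<1$, all radii appearing throughout remain bounded, so no difficulty arises at large scales.
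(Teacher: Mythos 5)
Your proof is correct and follows essentially the same route as the paper, which simply invokes the covering argument of \cite[Theorem 2.23]{ZHU} adapted to the non-isotropic distance at the boundary: a maximal family with disjoint balls $Q\left(\cdot,r/4\right)$, covering at radius $r$ via the triangle inequality, and bounded overlap at radius $4r$ via the uniform estimate $\sigma\left(Q\left(\xi,h\right)\right)\sim h^{N}$ (the doubling property). The only cosmetic difference is that you obtain finiteness of the family directly from the measure bound, whereas the paper appeals to a compactness argument.
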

\begin{proof}
The proof, using a variant of \cite[Lemma 2.22]{ZHU} for the non-isotropic
distance at the boundary is quite identical to that of \cite[Theorem 2.23]{ZHU}.
The fact that we can take a finite union follows from a compactness
argument.
\end{proof}
From now on, $M$ will always stand for the constant involved in \prettyref{lem|covering_lemma_Bergman}.
We will now define a maximal operator associated to a covering of
the ball with convenient subsets. Let $n\geq0$ be an integer and
denote by $C_{n}$ the corona\[
C_{n}=\left\{ z\in\mathbb{B}_{N},\,1-\frac{1}{2^{n}}\leq\left|z\right|<1-\frac{1}{2^{n+1}}\right\} .\]
For any $n\geq0$, let $\left(\xi_{n,k}\right)_{k}\subset\mathbb{S}_{N}$
be given by \prettyref{lem|covering_lemma_Bergman} putting ${\displaystyle r=\frac{1}{2^{n}}}$.
For $k\geq0$, we set\[
T_{0,k}=\left\{ z\in\mathbb{B}_{N}\setminus\left\{ 0\right\} ,\,\frac{z}{\left|z\right|}\in Q\left(\xi_{0,k},1\right)\right\} \cup\left\{ 0\right\} .\]
Then let us define the sets $T_{n,k}$, for $n\geq1$ and $k\geq0$,
by\[
T_{n,k}=\left\{ z\in\mathbb{B}_{N}\setminus\left\{ 0\right\} ,\,\frac{z}{\left|z\right|}\in Q\left(\xi_{n,k},\frac{1}{2^{n}}\right)\right\} .\]
We have both\[
\bigcup_{n\geq0}C_{n}=\mathbb{B}_{N}\]
and\[
\bigcup_{k\geq0}T_{0,k}=\mathbb{B}_{N}\mbox{ and }\bigcup_{k\geq0}T_{n,k}=\mathbb{B}_{N}\setminus\left\{ 0\right\} ,\, n\geq1.\]
For $\left(n,k\right)\in\mathbb{N}^{2}$, we finally define the subset
$\Delta_{\left(n,k\right)}$ of $\mathbb{B}_{N}$ by\[
\Delta_{\left(n,k\right)}=C_{n}\cap T_{n,k}.\]
We have\begin{eqnarray*}
\Delta_{\left(0,k\right)} & = & \left(W\left(\xi_{0,k},1\right)\cap C_{0}\right)\cup\left\{ 0\right\} ;\\
\Delta_{\left(n,k\right)} & = & W\left(\xi_{n,k},\frac{1}{2^{n}}\right)\cap C_{n},\, n\geq1.\end{eqnarray*}
By construction, the $\Delta_{\left(n,k\right)}$'s satisfy the following
properties:
\begin{enumerate}
\item $\bigcup_{\left(n,k\right)\in\mathbb{N}^{2}}\Delta_{\left(n,k\right)}=\mathbb{B}_{N}.$
\item For every $\left(n,k\right)$, $\Delta_{\left(n,k\right)}$ is a subset
of the closed Carleson window $\overline{{\displaystyle W\left(\xi_{n,k},\frac{1}{2^{n}}\right)}}$
and by construction, we can find a constant $\tilde{C}>0$, independent
of $\left(n,k\right)$ such that\[
v_{\alpha}\left({\displaystyle W\left(\xi_{n,k},\frac{1}{2^{n}}\right)}\right)\leq\tilde{C}v_{\alpha}\left(\Delta_{\left(n,k\right)}\right).\]

\item Given $0<\varepsilon<1/2$, if $C_{n}^{\varepsilon}$ denotes the
corona defined by\[
C_{n}^{\varepsilon}=\left\{ z\in\mathbb{B}_{N},\,\left(1+\varepsilon\right)\left(1-\frac{1}{2^{n}}\right)\leq\left|z\right|<\left(1+\varepsilon\right)\left(1-\frac{1}{2^{n+1}}\right)\right\} ,\]
then each point of $\mathbb{B}_{N}$ belongs to at most $M$ of the
sets $\Delta_{\left(n,k\right)}^{\varepsilon}$'s defined by\begin{eqnarray*}
\Delta_{\left(0,k\right)}^{\varepsilon} & = & \left({\displaystyle W\left(\xi_{0,k},1+\varepsilon\right)}\cap C_{0}^{\varepsilon}\right)\cup\left\{ 0\right\} ;\\
\Delta_{\left(n,k\right)}^{\varepsilon} & = & {\displaystyle W\left(\xi_{n,k},\left(1+\varepsilon\right)\frac{1}{2^{n}}\right)}\cap C_{n}^{\varepsilon},\, n\geq1.\end{eqnarray*}
This comes from the construction and the previous covering lemma.
In particular, we have\[
\sum_{\left(n,k\right)\in\mathbb{N}^{2}}v_{\alpha}\left(\Delta_{\left(n,k\right)}^{\varepsilon}\right)\leq Mv_{\alpha}\left(\mathbb{B}_{N}\right)=M.\]

\end{enumerate}
For any $f\in A_{\alpha}^{\psi}\left(\mathbb{B}_{N}\right)$, we define
the following maximal function $\Lambda_{f}$:\begin{equation}
\Lambda_{f}=\sum_{n,k\geq0}\sup_{\Delta_{\left(n,k\right)}}\left(\left|f\left(z\right)\right|\right)\chi_{\Delta_{\left(n,k\right)}}\label{eq|def_max_funct_Bergm_Orlicz}\end{equation}
where $\chi_{\Delta_{\left(n,k\right)}}$ is the characteristic function
of $\Delta_{\left(n,k\right)}$. The next proposition says that the
maximal operator $\Lambda:f\longmapsto\Lambda_{f}$ is bounded from
$A_{\alpha}^{\psi}\left(\mathbb{B}_{N}\right)$ to $L_{\alpha}^{\psi}\left(\mathbb{B}_{N},v_{\alpha}\right)$.
\begin{stprop}
\label{prop|cont_ma_funct_Berg_Orlicz}Let $\psi$ be an Orlicz function
and let $\alpha>-1$. Then the maximal operator $\Lambda$ is bounded
from $A_{\alpha}^{\psi}\left(\mathbb{B}_{N}\right)$ to $L_{\alpha}^{\psi}\left(\mathbb{B}_{N},v_{\alpha}\right)$.
More precisely there exists $B\geq1$ such that for every $f\in A_{\alpha}^{\psi}\left(\mathbb{B}_{N}\right)$,
we have\[
\left\Vert \Lambda_{f}\right\Vert _{L_{\alpha}^{\psi}}\leq2B\left\Vert f\right\Vert _{A_{\alpha}^{\psi}}.\]
\end{stprop}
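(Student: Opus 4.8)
The plan is to reduce the statement to a single integral inequality and establish it by a local-to-global argument. The case $f=0$ being trivial, fix $f\in A_{\alpha}^{\psi}\left(\mathbb{B}_{N}\right)$ with $C:=\left\Vert f\right\Vert _{A_{\alpha}^{\psi}}>0$; then $\int_{\mathbb{B}_{N}}\psi\left(|f|/C\right)dv_{\alpha}\leq1$, so by definition of the Luxemburg norm it is enough to produce a constant $B\geq1$, independent of $f$ and $\psi$, such that $\int_{\mathbb{B}_{N}}\psi\left(\Lambda_{f}/(2BC)\right)dv_{\alpha}\leq1$. The two ingredients are: (i) a \emph{local estimate} bounding, on each piece $\Delta_{(n,k)}$, the quantity $\psi\left(\sup_{\Delta_{(n,k)}}|f|/C\right)$ by the $v_{\alpha}$-mean of $\psi\left(|f|/C\right)$ over the dilated piece $\Delta_{(n,k)}^{\varepsilon}$; and (ii) a \emph{summation} gluing these bounds using the convexity of $\psi$ together with the bounded-overlap properties of the families $\left\{ \Delta_{(n,k)}\right\}$ and $\left\{ \Delta_{(n,k)}^{\varepsilon}\right\}$.

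For (i), I would first fix an admissible $\varepsilon\in(0,1/2)$ and a radius $\rho\in(0,1)$ for which the Bergman (pseudohyperbolic) ball $D(z,\rho)$ is contained in $\Delta_{(n,k)}^{\varepsilon}$ for every $(n,k)$ and every $z\in\Delta_{(n,k)}$: this is precisely what the dilated coronas $C_{n}^{\varepsilon}$ and windows are built to provide, because for fixed $\rho$ the set $D(z,\rho)$ is comparable to a Carleson window of scale $1-|z|^{2}\sim2^{-n}$ centred at $z/|z|$. Then, exactly as in the proof of \prettyref{prop|point_eval_func_Bergman}, for holomorphic $f$ and $z\in\mathbb{B}_{N}$ the function $\psi\left(|f\circ\varphi_{z}|/C\right)$ is plurisubharmonic, hence subharmonic, so the sub-mean-value inequality over the Euclidean ball $\rho\mathbb{B}_{N}$, followed by the change of variables $u=\varphi_{z}(w)$ (real Jacobian $\left((1-|z|^{2})/|1-\langle u,z\rangle|^{2}\right)^{N+1}$) and the comparabilities $|1-\langle u,z\rangle|\sim1-|z|^{2}\sim1-|u|^{2}$ on $D(z,\rho)$, yields
\[
\psi\left(\frac{|f(z)|}{C}\right)\leq\frac{c_{\rho}}{\left(1-|z|^{2}\right)^{N+1+\alpha}}\int_{D(z,\rho)}\psi\left(\frac{|f|}{C}\right)dv_{\alpha},
\]
with $c_{\rho}$ depending only on $\rho$, $N$, $\alpha$. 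For $z\in\Delta_{(n,k)}\subset C_{n}$ one has $(1-|z|^{2})^{N+1+\alpha}\sim(2^{-n})^{N+1+\alpha}\sim v_{\alpha}\left(\Delta_{(n,k)}^{\varepsilon}\right)$ (using \prettyref{eq|eq_leb_mes_Carle_win_power_Berg}, the second property of the $\Delta_{(n,k)}$'s and the inclusion $D(z,\rho)\subset\Delta_{(n,k)}^{\varepsilon}$), and $D(z,\rho)\subset\Delta_{(n,k)}^{\varepsilon}$; since the right-hand side no longer depends on $z$, taking the supremum over $z\in\Delta_{(n,k)}$ (licit since $\psi$ is continuous and non-decreasing and $f$ is bounded on the compact set $\overline{\Delta_{(n,k)}}\subset\mathbb{B}_{N}$) gives a uniform $B'\geq1$ with
\[
\psi\left(\frac{1}{C}\sup_{\Delta_{(n,k)}}|f|\right)\leq\frac{B'}{v_{\alpha}\left(\Delta_{(n,k)}^{\varepsilon}\right)}\int_{\Delta_{(n,k)}^{\varepsilon}}\psi\left(\frac{|f|}{C}\right)dv_{\alpha}.
\]

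For (ii), put $g_{n,k}:=\sup_{\Delta_{(n,k)}}|f|$, so $\Lambda_{f}=\sum_{n,k}g_{n,k}\chi_{\Delta_{(n,k)}}$, and let $N(z)\leq M$ be the number of indices $(n,k)$ with $z\in\Delta_{(n,k)}$ — the bound $N(z)\leq M$ holds because the coronas $C_{n}$ are pairwise disjoint and \prettyref{lem|covering_lemma_Bergman} bounds by $M$ the number of the $Q(\xi_{n,k},2^{-n})$, $k\geq0$, containing a given point of $\mathbb{S}_{N}$. Writing $\Lambda_{f}(z)/(2BC)$ as the convex combination $\sum_{(n,k):\,z\in\Delta_{(n,k)}}\frac{1}{N(z)}\cdot\frac{N(z)g_{n,k}}{2BC}$, Jensen's inequality, $1/N(z)\leq1$, $N(z)\leq M$ and the monotonicity of $\psi$ give $\psi\left(\Lambda_{f}(z)/(2BC)\right)\leq\sum_{n,k}\psi\left(Mg_{n,k}/(2BC)\right)\chi_{\Delta_{(n,k)}}(z)$. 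Assuming $2B\geq M$ and using $\psi(\lambda x)\leq\lambda\psi(x)$ for $\lambda\in[0,1]$, then the estimate of (i) and $v_{\alpha}(\Delta_{(n,k)})\leq\tilde{C}\,v_{\alpha}(\Delta_{(n,k)}^{\varepsilon})$, and finally the third property of the $\Delta_{(n,k)}^{\varepsilon}$'s ($\sum_{n,k}\chi_{\Delta_{(n,k)}^{\varepsilon}}\leq M$) together with $\int_{\mathbb{B}_{N}}\psi(|f|/C)\,dv_{\alpha}\leq1$, termwise integration gives
\[
\int_{\mathbb{B}_{N}}\psi\left(\frac{\Lambda_{f}}{2BC}\right)dv_{\alpha}\leq\frac{MB'\tilde{C}}{2B}\sum_{n,k}\int_{\Delta_{(n,k)}^{\varepsilon}}\psi\left(\frac{|f|}{C}\right)dv_{\alpha}\leq\frac{M^{2}B'\tilde{C}}{2B}.
\]
Choosing $B:=\max\left\{ M,\,M^{2}B'\tilde{C}/2\right\}$, which depends only on $N$ and $\alpha$, makes the last quantity $\leq1$, and hence $\left\Vert \Lambda_{f}\right\Vert _{L_{\alpha}^{\psi}}\leq2BC=2B\left\Vert f\right\Vert _{A_{\alpha}^{\psi}}$, as desired.

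The main obstacle is the geometric input used in (i): that a single $\rho$ (equivalently a single $\varepsilon$) can be chosen with $D(z,\rho)\subset\Delta_{(n,k)}^{\varepsilon}$ uniformly in $(n,k)$ and $z\in\Delta_{(n,k)}$, which must be checked against the explicit definition of the dilated sets. By contrast, the subharmonicity-plus-Jacobian computation behind the displayed pointwise bound, the measure comparison $(2^{-n})^{N+1+\alpha}\sim v_{\alpha}\left(W(\xi_{n,k},2^{-n})\right)$, and the Jensen-and-overlap bookkeeping in (ii) are all routine.
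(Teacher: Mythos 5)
Your plan follows the same route as the paper's proof: a local sub-mean-value estimate of $\psi\bigl(\sup_{\Delta_{(n,k)}}\left|f\right|/C\bigr)$ by an average of $\psi\left(\left|f\right|/C\right)$ over the dilated set $\Delta_{(n,k)}^{\varepsilon}$, followed by a summation using bounded overlap and the convexity of $\psi$. Two of your variations are harmless or even improvements: you obtain the local estimate via the change-of-variables computation of \prettyref{prop|point_eval_func_Bergman} on a pseudohyperbolic ball instead of the paper's device of picking $\tau_{(n,k)}$ with $\left|f\left(\tau_{(n,k)}\right)\right|\geq\frac{1}{2}\sup_{\Delta_{(n,k)}}\left|f\right|$ and invoking a ``refined submean property'' (which is where the paper's factor $2$ comes from), and your Jensen argument with the weights $1/N(z)$ treats the overlap of the $\Delta_{(n,k)}$'s for fixed $n$ explicitly, a point the paper's first displayed inequality passes over silently. (Minor slip: the bound on $v_{\alpha}\left(\Delta_{(n,k)}\right)/v_{\alpha}\bigl(\Delta_{(n,k)}^{\varepsilon}\bigr)$ is the paper's constant $D_{\varepsilon}$, not the $\tilde{C}$ of property (2); with a genuine enlargement it is trivial anyway.)

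The step you defer, however, is exactly where the difficulty sits, and as stated it is false for the dilated sets as literally defined: $C_{n}^{\varepsilon}$ has radii $(1+\varepsilon)\left(1-2^{-n}\right)$ and $(1+\varepsilon)\left(1-2^{-n-1}\right)$, so it does \emph{not} contain $C_{n}$, it is disjoint from $C_{n}$ as soon as $\varepsilon\left(1-2^{-n}\right)\geq2^{-n-1}$, and it lies entirely outside $\overline{\mathbb{B}_{N}}$ once $2^{-n}<\varepsilon/(1+\varepsilon)$, so that $\Delta_{(n,k)}^{\varepsilon}$ is eventually empty and no containment $D(z,\rho)\subset\Delta_{(n,k)}^{\varepsilon}$ can hold (this appears to be a typo in the construction, and the paper's own refined submean inequality suffers from it equally, since it needs $\Delta_{(n,k)}^{\varepsilon}$ to be a neighbourhood of $\tau_{(n,k)}$ of comparable size). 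To complete your argument you must replace $\Delta_{(n,k)}^{\varepsilon}$ by an honest enlargement, for instance $\tilde{\Delta}_{(n,k)}^{\varepsilon}=W\bigl(\xi_{n,k},(1+\varepsilon)2^{-n}\bigr)\cap\bigl\{1-(1+\varepsilon)2^{-n}\leq\left|z\right|<1-(1+\varepsilon)^{-1}2^{-n-1}\bigr\}$: these still satisfy $v_{\alpha}\bigl(\tilde{\Delta}_{(n,k)}^{\varepsilon}\bigr)\lesssim\left(2^{-n}\right)^{N+1+\alpha}$ and have bounded overlap (at most $3M$, by \prettyref{lem|covering_lemma_Bergman} and the fact that a given $1-\left|z\right|$ meets at most three of the enlarged radial ranges), and for them your containment does hold for $\rho$ small depending only on $\varepsilon$ and $N$, by the standard estimates $1-\left|w\right|=\left(1+O(\rho)\right)\left(1-\left|z\right|\right)$ and $\left|1-\left\langle w/\left|w\right|,z/\left|z\right|\right\rangle \right|\leq C\rho\left(1-\left|z\right|\right)$ for $w\in D(z,\rho)$ and $\left|z\right|\geq1/2$, the finitely many scales $n\leq n_{0}(\varepsilon)$ being handled directly by the plain submean inequality over a Euclidean ball of fixed radius. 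So there is a genuine gap, but it is local and fixable; once the dilated sets are corrected and this containment is verified, your proof is complete and gives the stated bound with $B$ depending only on $N$ and $\alpha$.
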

\begin{proof}
Fix $f\in A_{\alpha}^{\psi}\left(\mathbb{B}_{N}\right)$ and set $C=\left\Vert f\right\Vert _{A_{\alpha}^{\psi}}$.
We denote by ${\displaystyle c_{\left(n,k\right)}=\sup_{\Delta_{\left(n,k\right)}}\left(\left|f\right|\right)}$
and let $\tau_{\left(n,k\right)}\in\Delta_{\left(n,k\right)}$ be
such that ${\displaystyle \left|f\left(\tau_{\left(n,k\right)}\right)\right|\geq\frac{c_{\left(n,k\right)}}{2}}$.
Since ${\displaystyle \frac{\psi\circ\left|f\right|}{C}}$ is subharmonic,
and by a usual refined submean property, we have\begin{eqnarray*}
\int_{\mathbb{B}_{N}}\psi\left(\frac{\Lambda_{f}}{2C}\right)dv_{\alpha} & \leq & \sum_{n,k\geq0}\psi\left(\frac{\left|f\left(\tau_{\left(n,k\right)}\right)\right|}{C}\right)v_{\alpha}\left(\Delta_{\left(n,k\right)}\right)\\
 & \leq & \sum_{n,k\geq0}\frac{v_{\alpha}\left(\Delta_{\left(n,k\right)}\right)}{v_{\alpha}\left(\Delta_{\left(n,k\right)}^{\varepsilon}\right)}\int_{\Delta_{\left(n,k\right)}^{\varepsilon}}\psi\left(\frac{\left|f\right|}{C}\right)dv_{\alpha}.\end{eqnarray*}
A classical computation shows that\[
\frac{v_{\alpha}\left(\Delta_{\left(n,k\right)}\right)}{v_{\alpha}\left(\Delta_{\left(n,k\right)}^{\varepsilon}\right)}\leq D_{\varepsilon},\]
where $D_{\varepsilon}$ is a positive constant which only depends
on $\varepsilon$. Therefore we get,\begin{eqnarray*}
\int_{\mathbb{B}_{N}}\psi\left(\frac{\Lambda_{f}}{2C}\right)dv_{\alpha} & \leq & D_{\varepsilon}\sum_{n,k\geq0}\int_{\Delta_{\left(n,k\right)}^{\varepsilon}}\psi\left(\frac{\left|f\right|}{C}\right)dv_{\alpha}.\end{eqnarray*}
Now, we have $C_{n}^{\varepsilon}=\cup_{k\geq0}\Delta_{\left(n,k\right)}^{\varepsilon}$
and, by construction of the $\Delta_{\left(n,k\right)}$'s, for every
$n$, each point of $C_{n}^{\varepsilon}$ belongs to at most $M$
of the sets $\Delta_{\left(n,k\right)}^{\varepsilon}$. Then, for
$n$ fixed,\[
\sum_{k\geq0}\int_{\Delta_{\left(n,k\right)}^{\varepsilon}}\psi\left(\frac{\left|f\right|}{C}\right)dv_{\alpha}\leq M\int_{C_{n}^{\varepsilon}}\psi\left(\frac{\left|f\right|}{C}\right)dv_{\alpha}.\]
Next, we of course have $\mathbb{B}_{N}\subset\cup_{n\geq0}C_{n}^{\varepsilon}$
and each point of $\mathbb{B}_{N}$ belongs to at most $3$ of the
$C_{n}^{\varepsilon}$'s. It follows that\begin{eqnarray*}
\int_{\mathbb{B}_{N}}\psi\left(\frac{\Lambda_{f}}{2C}\right)dv_{\alpha} & \leq & D_{\varepsilon}M\sum_{n\geq0}\int_{C_{n}^{\varepsilon}}\psi\left(\frac{\left|f\right|}{C}\right)dv_{\alpha}\\
 & \leq & B\int_{\mathbb{B}_{N}}\psi\left(\frac{\left|f\right|}{C}\right)dv_{\alpha}\end{eqnarray*}
for some constant $B\geq1$. Now, by convexity, we get\[
\int_{\mathbb{B}_{N}}\psi\left(\frac{\Lambda_{f}}{2BC}\right)dv_{\alpha}\leq1,\]
hence $\left\Vert \Lambda_{f}\right\Vert _{L_{\alpha}^{\psi}}\leq2B\left\Vert f\right\Vert _{A_{\alpha}^{\psi}}.$
\end{proof}
We state our version of Carleson's theorem as follows:
\begin{stthm}
\label{thm|Bergman_Carleson_Theorem}There exists a constant $\tilde{C}>0$
such that, for every $f\in A_{\alpha}^{1}\left(\mathbb{B}_{N}\right)$
and every positive finite Borel measure $\mu$ on $\mathbb{B}_{N}$,
we have\[
\mu\left(\left\{ z\in\mathbb{B}_{N},\,\left|z\right|>1-h\mbox{ and }\left|f\left(z\right)\right|>t\right\} \right)\leq\tilde{C}K_{\mu,\alpha}\left(2h\right)v_{\alpha}\left(\left\{ \Lambda_{f}>t\right\} \right)\]
for every $h\in\left(0,1/2\right)$ and every $t>0$.\end{stthm}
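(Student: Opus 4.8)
The plan is to cover the set $E=\{z\in\mathbb{B}_{N}:\left|z\right|>1-h\text{ and }\left|f(z)\right|>t\}$ by those cells $\Delta_{(n,k)}$ that it meets, and then to play off two facts: each such cell is entirely contained in $\{\Lambda_{f}>t\}$, which will control its $v_{\alpha}$-measure; and each such cell lies deep enough in $\mathbb{B}_{N}$ that its $\mu$-measure is at most $K_{\mu,\alpha}(2h)$ times its $v_{\alpha}$-measure. The bounded overlap of the family $\{\Delta_{(n,k)}\}$ will then convert the resulting sum of $v_{\alpha}$-measures into $M\,v_{\alpha}(\{\Lambda_{f}>t\})$.

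First I fix $h\in(0,1/2)$, $t>0$ and $f\in A_{\alpha}^{1}(\mathbb{B}_{N})$ and let $E$ be as above; since $f$ is holomorphic it is bounded on each $\overline{C_{n}}$, so $\Lambda_{f}$ is well defined and Borel. Because $1-h>1/2$, every $z\in E$ has $\left|z\right|>1/2$ and hence $z\notin C_{0}$; and if $z\in\Delta_{(n,k)}\subset C_{n}$ with $n\geq1$ then $1-\left|z\right|>2^{-n-1}$, so $2^{-n-1}<1-\left|z\right|<h$, i.e.\ $2^{-n}<2h$. I set $I=\{(n,k)\in\mathbb{N}^{2}:\Delta_{(n,k)}\cap E\neq\emptyset\}$; by the above every $(n,k)\in I$ has $n\geq1$ and $2^{-n}<2h$, and $E\subset\bigcup_{(n,k)\in I}\Delta_{(n,k)}$ since the $\Delta$'s cover $\mathbb{B}_{N}$. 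Finally, for $(n,k)\in I$ a point $w\in\Delta_{(n,k)}\cap E$ gives $\sup_{\Delta_{(n,k)}}\left|f\right|\geq\left|f(w)\right|>t$, and as $\Lambda_{f}(z)$ is a sum of nonnegative terms one of which equals $\sup_{\Delta_{(n,k)}}\left|f\right|$ whenever $z\in\Delta_{(n,k)}$, we get $\Delta_{(n,k)}\subset\{\Lambda_{f}>t\}$ for every $(n,k)\in I$.

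Next I estimate $\mu$ on a fixed cell $\Delta_{(n,k)}$ with $(n,k)\in I$; write $s=2^{-n}<2h$. By the second listed property of the $\Delta$'s, $\Delta_{(n,k)}\subset\overline{W(\xi_{n,k},s)}$, and $\overline{W(\xi_{n,k},s)}\cap\mathbb{B}_{N}\subset W(\xi_{n,k},s')$ for every $s'>s$ (here one uses $s<1$, so the radial part of $W$ stays away from the origin and $z\mapsto z/\left|z\right|$ is continuous there). Hence $\mu(\Delta_{(n,k)})\leq\varrho_{\mu}(s')\leq K_{\mu,\alpha}(s')(s')^{N+1+\alpha}\leq K_{\mu,\alpha}(2h)(s')^{N+1+\alpha}$ for every $s'\in(s,2h]$, and letting $s'\downarrow s$ yields $\mu(\Delta_{(n,k)})\leq K_{\mu,\alpha}(2h)\,s^{N+1+\alpha}$. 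Since $s^{N+1+\alpha}\sim v_{\alpha}(W(\xi_{n,k},s))$ by \prettyref{eq|eq_leb_mes_Carle_win_power_Berg} and $v_{\alpha}(W(\xi_{n,k},s))\leq\tilde{C}v_{\alpha}(\Delta_{(n,k)})$ again by that same property, I obtain $\mu(\Delta_{(n,k)})\leq C'K_{\mu,\alpha}(2h)v_{\alpha}(\Delta_{(n,k)})$ with $C'>0$ depending only on $N$ and $\alpha$.

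Finally I sum over $I$. As $I\subset\{(n,k):n\geq1\}$, the coronas $C_{n}$ occurring are pairwise disjoint, and for each such $n$ any point of $\mathbb{B}_{N}$ belongs to at most $M$ of the sets $\Delta_{(n,k)}=C_{n}\cap T_{n,k}$ (by \prettyref{lem|covering_lemma_Bergman}, since $Q(\xi_{n,k},2^{-n})\subset Q(\xi_{n,k},4\cdot2^{-n})$); hence $\sum_{(n,k)\in I}\chi_{\Delta_{(n,k)}}\leq M$ pointwise, and combined with $\Delta_{(n,k)}\subset\{\Lambda_{f}>t\}$ for $(n,k)\in I$ this gives $\sum_{(n,k)\in I}\chi_{\Delta_{(n,k)}}\leq M\chi_{\{\Lambda_{f}>t\}}$, so $\sum_{(n,k)\in I}v_{\alpha}(\Delta_{(n,k)})\leq M v_{\alpha}(\{\Lambda_{f}>t\})$. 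Putting the pieces together,
\[
\mu(E)\leq\sum_{(n,k)\in I}\mu(\Delta_{(n,k)})\leq C'K_{\mu,\alpha}(2h)\sum_{(n,k)\in I}v_{\alpha}(\Delta_{(n,k)})\leq C'M\,K_{\mu,\alpha}(2h)\,v_{\alpha}(\{\Lambda_{f}>t\}),
\]
which is the claim with $\tilde{C}=C'M$. I expect the main (if minor) obstacle to be the per-cell estimate: one must pass from the \emph{closed} Carleson window $\overline{W(\xi_{n,k},2^{-n})}$ back to an open window of parameter still at most $2h$, and it is precisely the strict inequality $2^{-n}<2h$ --- valid for every cell meeting $E$ --- that makes this work and that forces the statement to be phrased with $K_{\mu,\alpha}(2h)$ rather than $K_{\mu,\alpha}(h)$.
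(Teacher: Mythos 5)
Your proof is correct and follows essentially the same route as the paper's: you cover the set by the cells $\Delta_{(n,k)}$ that meet it, use $2^{-n}<2h$ together with $v_{\alpha}\left(W\left(\xi_{n,k},2^{-n}\right)\right)\leq\tilde{C}v_{\alpha}\left(\Delta_{(n,k)}\right)$ and \prettyref{eq|eq_leb_mes_Carle_win_power_Berg} for the per-cell Carleson estimate, and finish with the bounded overlap $M$ and the inclusion $\Delta_{(n,k)}\subset\left\{\Lambda_{f}>t\right\}$. The only difference is that you spell out details the paper leaves implicit (passing from the closed window to a slightly larger open one, and the justification of the overlap constant), which is harmless.
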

\begin{proof}
The proof is quite identical to that of \cite[Lemma 2.3]{QUEF-LI-LEFEVRE-BERGMAN-ORLICZ}.
Anyway, we prefer to give the details. Fix $0<h<1$ and $t>0$. We
identify $i\in\mathbb{N}$ and $\left(n,k\right)\in\mathbb{N}^{2}$
thanks to an arbitrary bijection from $\mathbb{N}^{2}$ onto $\mathbb{N}$.
We will write $i\longleftrightarrow\left(n,k\right)$ without possible
confusion. Define\[
I=\left\{ i\longleftrightarrow\left(n,k\right),\,\sup_{\Delta_{i}}\left|f\right|>t\right\} \]
and\[
I_{h}=\left\{ i\longleftrightarrow\left(n,k\right),\, h>\frac{1}{2^{n+1}}\mbox{ and }\sup_{\Delta_{i}}\left|f\right|>t\right\} .\]
Denoting by $W_{i}$ the smallest Carleson window containing $\Delta_{i}$,
by the three properties of the $\Delta_{i}$'s listed above, we can
find some constants $C>0$ and $\tilde{C}>0$ such that\begin{eqnarray*}
\mu\left(\left\{ z\in\mathbb{B}_{N},\,\left|z\right|>1-h\mbox{ and }\left|f\left(z\right)\right|>t\right\} \right) & \leq & \sum_{i\in I_{h}}\mu\left(\Delta_{i}\right)\\
 & \leq & \sum_{i\in I_{h}}\mu\left(W_{i}\right)\\
 & \leq & C\sum_{i\in I_{h}}K_{\mu,\alpha}\left(2h\right)v_{\alpha}\left(W_{i}\right)\\
 & \leq & C\tilde{C}K_{\mu,\alpha}\left(2h\right)\sum_{i\in I}v_{\alpha}\left(\Delta_{i}\right).\end{eqnarray*}
The third inequality comes from \prettyref{eq|eq_leb_mes_Carle_win_power_Berg}
and from the fact that, for every $i\in I_{h}$, as the radius of
$W_{i}$ is smaller than ${\displaystyle \frac{1}{2^{n}}}$, it is
then smaller than $2h$. Now, as each point of $\mathbb{B}_{N}$ belongs
to at most $M$ of the $\Delta_{i}$'s, we have\[
\sum_{i\in I}v_{\alpha}\left(\Delta_{i}\right)\leq Mv_{\alpha}\left(\bigcup_{i\in I}\Delta_{i}\right)\leq Mv_{\alpha}\left(\left\{ \Lambda_{f}>t\right\} \right).\]
and\[
\mu\left(\left\{ z\in\mathbb{B}_{N},\,\left|z\right|>1-h\mbox{ and }\left|f\left(z\right)\right|>t\right\} \right)\lesssim K_{\mu,\alpha}\left(2h\right)v_{\alpha}\left(\left\{ \Lambda_{f}>t\right\} \right).\]

\end{proof}
The last lemma gives the following technical result.
\begin{stlem}
\label{lem|lem_tech_Berg_Orl}Let $\mu$ be a finite positive Borel
measure on $\mathbb{B}_{N}$ and let $\psi_{1}$ and $\psi_{2}$ be
two Orlicz functions. Assume that there exist $A>0$, $\eta>0$ and
$h_{A}\in\left(0,1/2\right)$ such that\[
K_{\mu,\alpha}\left(h\right)\leq\eta\frac{1/h^{N+1+\alpha}}{\psi_{2}\left(A\psi_{1}^{-1}\left(1/h^{N+1+\alpha}\right)\right)}\]
for every $h\in\left(0,h_{A}\right)$. Then, there exist three constants
$B>0$, $x_{A}>0$ and $C_{1}$ (this latter does not depend on $A$,
$\eta$ and $h_{A}$) such that, for every $f\in A_{\alpha}^{\psi_{1}}\left(\mathbb{B}_{N}\right)$
such that $\left\Vert f\right\Vert _{A_{\alpha}^{\psi_{1}}}\leq1$,
and every Borel subset $E$ of $\mathbb{B}_{N}$, we have\[
\int_{E}\psi_{2}\left(\frac{\left|f\right|}{B}\right)d\mu\leq\mu\left(E\right)\psi_{2}\left(x_{A}\right)+C_{1}\eta\int_{\mathbb{B}_{N}}\psi_{1}\left(\Lambda_{f}\right)dv_{\alpha}.\]
\end{stlem}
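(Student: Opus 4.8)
The plan is to decompose the integral $\int_E \psi_2(|f|/B)\,d\mu$ using the distribution function of $|f|$ on $E$, and to split the relevant range of values of $t$ into a "small values" part (bounded by $\mu(E)\psi_2(x_A)$) and a "large values" part (to be estimated by the Carleson-type hypothesis together with \prettyref{thm|Bergman_Carleson_Theorem}). The natural threshold is $t$ of size comparable to $\psi_1^{-1}(1)$ or, more precisely, to a value beyond which the point-evaluation bound (\prettyref{prop|point_eval_func_Bergman}) forces $z$ to be close to the boundary: since $\|f\|_{A_\alpha^{\psi_1}}\le 1$, we have $|f(z)|\le \psi_1^{-1}\bigl((1-|z|^2)^{-(N+1+\alpha)}\bigr)$ up to a constant, so $|f(z)|>t$ entails $1-|z|< c/\psi_1\!\left(t/c\right)^{1/(N+1+\alpha)}$ for a universal $c$; call this quantity $h(t)$. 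This is exactly the $h$ for which the hypothesis on $K_{\mu,\alpha}$ is tailored.

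Concretely, I would write, for a constant $B$ to be fixed,
\[
\int_E \psi_2\!\left(\frac{|f|}{B}\right)d\mu
= \int_0^\infty \mu\!\left(\{z\in E:\ \psi_2(|f(z)|/B)>s\}\right)ds,
\]
change variables $s=\psi_2(t/B)$, and split at $t=x_A$ where $x_A$ is chosen so large that $h(x_A)<h_A$ and that for $t\ge x_A$ the inclusion $\{|f|>t\}\subset\{|z|>1-h(t)\}$ is valid. The part $t\le x_A$ contributes at most $\mu(E)\psi_2(x_A)$, which is the first term. For the part $t> x_A$, I use \prettyref{thm|Bergman_Carleson_Theorem} with $h=h(t)$ (shrinking $B$ if needed so that $h(t)<1/2$), getting
\[
\mu\bigl(\{|z|>1-h(t),\ |f(z)|>t\}\bigr)\ \le\ \tilde C\,K_{\mu,\alpha}(2h(t))\,v_\alpha(\{\Lambda_f>t\}).
\]
Then I feed in the hypothesis $K_{\mu,\alpha}(2h(t))\lesssim \eta\,\dfrac{1/(2h(t))^{N+1+\alpha}}{\psi_2\bigl(A\psi_1^{-1}(1/(2h(t))^{N+1+\alpha})\bigr)}$. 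By the definition of $h(t)$, $1/h(t)^{N+1+\alpha}$ is comparable to $\psi_1(t/c)$, so $\psi_1^{-1}(1/h(t)^{N+1+\alpha})$ is comparable to $t$, and hence the denominator is $\gtrsim \psi_2(A't)$ for a suitable $A'$; choosing $B$ below so that $\psi_2(t/B)\le \psi_2(A't)$ (e.g. $1/B\le A'$) and using convexity/monotonicity of $\psi_2$, the factor $\psi_2(t/B)$ coming from the change of variables is absorbed. What remains after the change of variables $s=\psi_2(t/B)$, $ds = \psi_2'(t/B)\,dt/B$, is an integral in $t$ of the form $\eta\,\dfrac{\psi_1(t/c)}{\psi_2(A't)}\,\psi_2'(t/B)\,v_\alpha(\{\Lambda_f>t\})\,dt/B$; after the absorption this is bounded by $C_1\eta\int_{x_A}^\infty \psi_1'(\text{something}\cdot t)\,v_\alpha(\{\Lambda_f>t\})\,dt$ up to constants, and recognizing $\int_0^\infty \psi_1'(t)\,v_\alpha(\{\Lambda_f>t\})\,dt = \int_{\mathbb B_N}\psi_1(\Lambda_f)\,dv_\alpha$ (again by the layer-cake formula, after a harmless rescaling of $\psi_1$ that only affects $C_1$ by a universal factor) gives the second term $C_1\eta\int_{\mathbb B_N}\psi_1(\Lambda_f)\,dv_\alpha$.

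The main obstacle is the bookkeeping of constants in the last step: one must track how the dilation factors $c$, $A'$, $B$ interact under the convexity inequalities for $\psi_1$ and $\psi_2$, and verify that the constant $C_1$ multiplying $\eta\int\psi_1(\Lambda_f)$ can be taken independent of $A$, $\eta$, $h_A$ (it will depend only on $N$, $\alpha$ and the universal constants in \prettyref{thm|Bergman_Carleson_Theorem} and \prettyref{eq|eq_leb_mes_Carle_win_power_Berg}), while $B$ and $x_A$ are allowed to depend on $A$. The delicate point is that $\psi_1^{-1}(1/h^{N+1+\alpha})\asymp t$ is only a two-sided estimate with multiplicative constants, so one genuinely needs the flexibility of rescaling the argument inside $\psi_1$ and $\psi_2$ — this is where the hypothesis that $C_1$ does not depend on $A$ is used, since the $A$-dependence is quarantined into $B$ and $x_A$. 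A secondary point to handle carefully is the behavior near $t=x_A$ (matching the two pieces of the split) and the reduction from $h<1$ to $h<1/2$ in the Carleson theorem, both of which are resolved by shrinking $B$.
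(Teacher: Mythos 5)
Your proposal follows essentially the same route as the paper's proof: the layer-cake/Fubini formula for $\int_E\psi_2\left(\left|f\right|/B\right)d\mu$, the point-evaluation estimate of \prettyref{prop|point_eval_func_Bergman} to force $\left\{\left|f\right|>t\right\}$ into $\left\{\left|z\right|>1-h(t)\right\}$, then \prettyref{thm|Bergman_Carleson_Theorem} combined with the hypothesis on $K_{\mu,\alpha}$, cancellation of $\psi_2'$ against $\psi_2$, convexity of $\psi_1$ to produce $\psi_1'$, and a reverse layer-cake to reach $\int_{\mathbb{B}_N}\psi_1\left(\Lambda_f\right)dv_\alpha$, with the $A$-dependence confined to $B$ and $x_A$ exactly as in the paper. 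One small precision: in the absorption step you need $1/B$ smaller than $A'$ by a definite multiplicative gap (the paper uses $s\psi_2'(s)\leq 2\psi_2\left(\tfrac{3}{2}s\right)$), since with $1/B=A'$ the ratio $\psi_2'(t/B)/\psi_2(A't)$ need not be $O(1/t)$ for fast-growing Orlicz functions; this is covered by your stated freedom to shrink $B$, so it is a bookkeeping adjustment rather than a gap.
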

\begin{proof}
For $f\in A_{\alpha}^{\psi_{1}}\left(\mathbb{B}_{N}\right)$, $\left\Vert f\right\Vert _{A_{\alpha}^{\psi_{1}}}\leq1$,
and $E$ a Borel subset of $\mathbb{B}_{N}$, we begin by writing
the following formula, based on Fubini's integration:\begin{equation}
\int_{E}\psi_{2}\left(\left|f\right|\right)d\mu=\int_{0}^{\infty}\psi_{2}^{'}\left(t\right)\mu\left(\left\{ \left|f\right|>t\right\} \cap E\right)dt.\label{eq|eq_lem_tech_Berg_thm_fubini}\end{equation}
We concentrate our attention on the expression $\mu\left(\left\{ \left|f\right|>t\right\} \right)$.
We use the upper estimate of the point evaluation functional obtained
in \prettyref{prop|point_eval_func_Bergman} to get that if ${\displaystyle \left|f\left(z\right)\right|>t}$,
then, since $\left\Vert f\right\Vert _{A_{\alpha}^{\psi_{1}}}\leq1$,
we have\begin{eqnarray}
t & < & \psi_{1}^{-1}\left(\left(\frac{1+\left|z\right|}{1-\left|z\right|}\right)^{N+1+\alpha}\right)\nonumber \\
 & \leq & 2^{N+1+\alpha}\psi_{1}^{-1}\left(\left(\frac{1}{1-\left|z\right|}\right)^{N+1+\alpha}\right)\label{eq|inequ_eval_lem_tech}\end{eqnarray}
because $\psi$ is a convex function. Inequality \prettyref{eq|inequ_eval_lem_tech}
is now equivalent to the following one:\[
\left|z\right|>1-\left(\frac{1}{\psi_{1}\left(\frac{t}{2^{N+1+\alpha}}\right)}\right)^{1/\left(N+1+\alpha\right)}.\]
Carleson's theorem (\prettyref{thm|Bergman_Carleson_Theorem}) then
yields that\begin{eqnarray}
\mu\left(\left\{ \left|f\right|>t\right\} \right) & = & \mu\left(\left\{ \left|f\right|>t\right\} \cap\left\{ \left|z\right|>1-\left(\frac{1}{\psi_{1}\left(\frac{t}{2^{N+1+\alpha}}\right)}\right)^{1/\left(N+1+\alpha\right)}\right\} \right)\nonumber \\
 & \leq & \tilde{C}K_{\mu,\alpha}\left(2\left(\frac{1}{{\displaystyle \psi_{1}\left(\frac{t}{2^{N+1+\alpha}}\right)}}\right)^{1/\left(N+1+\alpha\right)}\right)v_{\alpha}\left(\left\{ \Lambda_{f}>t\right\} \right).\label{eq|lem_tech_Berg_appli_Carle}\end{eqnarray}
Now, if $A$, $h_{A}$ and $\eta$ are as in the statement of the
lemma, then, if\[
{\displaystyle \frac{1}{2^{N+1+\alpha}}{\displaystyle {\displaystyle \psi_{1}\left(\frac{3.2^{N+\alpha}}{A}s\right)>1/h_{A}^{N+1+\alpha}}}}\]
 i.e. $s\geq x_{A}:={\displaystyle \frac{A}{3.2^{N+\alpha}}\psi_{1}^{-1}\left(\left(2/h_{A}\right)^{N+1+\alpha}\right)}$,
then\begin{equation}
K_{\mu,\alpha}\left(2\left(\frac{1}{{\displaystyle \psi_{1}\left(\frac{3.2^{N+\alpha}}{A}s\right)}}\right)^{1/\left(N+1+\alpha\right)}\right)\leq\frac{\eta}{2^{N+1+\alpha}}\frac{{\displaystyle \psi_{1}\left(\frac{3.2^{N+\alpha}}{A}s\right)}}{\psi_{2}\left(\frac{3}{2}s\right)}.\label{eq|lem_tech_Berg_app_hyp}\end{equation}
Hence, applying \prettyref{eq|eq_lem_tech_Berg_thm_fubini} to ${\displaystyle \frac{A}{6.4^{N+\alpha}}\left|f\right|}$,
together with \prettyref{eq|lem_tech_Berg_appli_Carle} and \prettyref{eq|lem_tech_Berg_app_hyp},
and putting ${\displaystyle t=\frac{6.4^{N+\alpha}}{A}s}$ in \prettyref{eq|lem_tech_Berg_appli_Carle},
we get\begin{multline}
\int_{E}\psi_{2}\left(\frac{A}{6.4^{N+\alpha}}\left|f\right|\right)d\mu\leq\int_{0}^{x_{A}}\psi_{2}^{'}\left(s\right)\mu\left(E\right)ds\\
+\frac{\eta\tilde{C}}{2^{N+1+\alpha}}\int_{x_{A}}^{\infty}\psi_{2}^{'}\left(s\right)\frac{{\displaystyle \psi_{1}\left(\frac{3.2^{N+\alpha}}{A}s\right)}}{\psi_{2}\left(\frac{3}{2}s\right)}v_{\alpha}\left(\left\{ \Lambda_{f}>\frac{6.4^{N+\alpha}}{A}s\right\} \right)ds.\label{eq|last_eq}\end{multline}
For the second integral of the right hand side, notice that for an
Orlicz function $\psi$, we have\[
x\psi^{'}\left(x\right)\leq C\psi\left(\frac{\left(C+1\right)x}{C}\right)\]
for any $C>0$ and any $x\geq0$. Indeed, as $\psi^{'}\left(t\right)$
is non-decreasing, we have\[
\frac{x}{C}\psi^{'}\left(x\right)\leq\int_{x}^{\frac{C+1}{C}x}\psi^{'}\left(t\right)dt\leq\psi\left(\frac{C+1}{C}x\right).\]
Therefore\[
\frac{\psi_{2}^{'}\left(s\right)}{\psi_{2}\left(\frac{3}{2}s\right)}\leq\frac{2}{s}\]
and \prettyref{eq|last_eq} yields\begin{multline*}
\int_{E}\psi_{2}\left(\frac{A}{6.4^{N+\alpha}}\left|f\right|\right)d\mu\leq\psi_{2}\left(x_{A}\right)\mu\left(E\right)\\
+\frac{\eta\tilde{C}}{2^{N+\alpha}}\int_{x_{A}}^{\infty}\frac{1}{s}{\displaystyle \psi_{1}\left(\frac{3.2^{N+\alpha}}{A}s\right)}v_{\alpha}\left(\left\{ \Lambda_{f}>\frac{6.4^{N+\alpha}}{A}s\right\} \right)ds.\end{multline*}
Using the convexity of the function $\psi_{1}$, we get\begin{multline*}
\int_{E}\psi_{2}\left(\frac{A}{6.4^{N+\alpha}}\left|f\right|\right)d\mu\leq\psi_{2}\left(x_{A}\right)\mu\left(E\right)\\
+\frac{\eta\tilde{C}}{2^{N+\alpha}}\frac{3.2^{N+\alpha}}{A}\int_{0}^{\infty}{\displaystyle \psi_{1}^{'}\left(\frac{3.2^{N+\alpha}}{A}s\right)}v_{\alpha}\left(\left\{ \Lambda_{f}>\frac{6.4^{N+\alpha}}{A}s\right\} \right)ds\end{multline*}
i.e.\begin{eqnarray*}
\int_{E}\psi_{2}\left(\frac{A}{6.4^{N+\alpha}}\left|f\right|\right)d\mu & \leq & \psi_{2}\left(x_{A}\right)\mu\left(E\right)+\frac{\eta\tilde{C}}{2^{N+\alpha}}\int_{0}^{\infty}{\displaystyle \psi_{1}^{'}\left(u\right)}v_{\alpha}\left(\left\{ \Lambda_{f}>2^{N+1+\alpha}u\right\} \right)du\\
 & \leq & \psi_{2}\left(x_{A}\right)\mu\left(E\right)+\frac{\eta\tilde{C}}{2.4^{N+\alpha}}\int_{\mathbb{B}_{N}}\psi_{1}\left(\Lambda_{f}\right)dv_{\alpha}\end{eqnarray*}
and the proof of the lemma is complete.
\end{proof}

\subsection{The canonical embedding $A_{\alpha}^{\psi_{1}}\left(\mathbb{B}_{N}\right)\hookrightarrow L^{\psi_{2}}\left(\mu\right)$.}

We state our boundedness theorem in the Bergman-Orlicz spaces framework
as follows:
\begin{stthm}
\label{thm|Embed_Thm_Berg_Cont}Let $\mu$ be a finite positive Borel
measure on $\mathbb{B}_{N}$ and let $\psi_{1}$ and $\psi_{2}$ be
two Orlicz functions. Then:
\begin{enumerate}
\item If inclusion $A_{\alpha}^{\psi_{1}}\left(\mathbb{B}_{N}\right)\subset L^{\psi_{2}}\left(\mu\right)$
holds and is continuous, then there exists some $A>0$ such that\begin{equation}
\varrho_{\mu}\left(h\right)=O_{h\rightarrow0}\left(\frac{1}{\psi_{2}\left(A\psi_{1}^{-1}\left(1/h^{N+1+\alpha}\right)\right)}\right).\label{eq|R_Berg_Orlicz}\end{equation}

\item If there exists some $A>0$ such that\begin{equation}
K_{\mu,\alpha}\left(h\right)=O_{h\rightarrow0}\left(\frac{1/h^{N+1+\alpha}}{\psi_{2}\left(A\psi_{1}^{-1}\left(1/h^{N+1+\alpha}\right)\right)}\right)\label{eq|K_Berg_Orlicz}\end{equation}
then inclusion $A_{\alpha}^{\psi_{1}}\left(\mathbb{B}_{N}\right)\subset L^{\psi_{2}}\left(\mu\right)$
holds and is continuous.
\item If in addition $\psi_{1}=\psi_{2}=\psi$ satisfies the uniform $\nabla_{0}$-Condition,
then Conditions \prettyref{eq|R_Berg_Orlicz} and \prettyref{eq|K_Berg_Orlicz}
are equivalent.
\end{enumerate}
\end{stthm}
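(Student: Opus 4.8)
The plan is to prove the three parts in order, exploiting the machinery already assembled: the two-sided estimate for $\left\Vert\delta_a\right\Vert$ from \prettyref{prop|point_eval_func_Bergman}, the boundedness of the maximal operator $\Lambda$ from \prettyref{prop|cont_ma_funct_Berg_Orlicz}, our version of Carleson's theorem \prettyref{thm|Bergman_Carleson_Theorem}, and the technical estimate \prettyref{lem|lem_tech_Berg_Orl}.

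For part (1), I would test the continuous inclusion on the Berezin-type functions. Fix $\xi\in\mathbb{S}_N$ and $h\in(0,1)$, and pick a point $a=a_{\xi,h}$ close to $\xi$ with $1-\left|a\right|\sim h$; then for $z\in W(\xi,h)$ one has $H_a(z)\gtrsim 1/(1-\left|a\right|)^{N+1+\alpha}\sim 1/h^{N+1+\alpha}$, or equivalently work directly with the normalized reproducing-kernel-type test function $f_{a}=H_{a}/\left\Vert H_a\right\Vert_{A_\alpha^{\psi_1}}$. Since $\left\Vert H_a\right\Vert_\infty\sim (1-\left|a\right|)^{-(N+1+\alpha)}$ and, by \cite[Lemma 3.9]{QUEF-LI-LE-RO-PI}, $\left\Vert H_a\right\Vert_{A_\alpha^{\psi_1}}\sim (1-\left|a\right|)^{-(N+1+\alpha)}/\psi_1^{-1}\big((1-\left|a\right|)^{-(N+1+\alpha)}\big)$, on the window $W(\xi,h)$ the function $\left|f_a\right|$ is bounded below by a multiple of $\psi_1^{-1}(1/h^{N+1+\alpha})$. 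Feeding this into $\left\Vert f_a\right\Vert_{L^{\psi_2}(\mu)}\lesssim \left\Vert f_a\right\Vert_{A_\alpha^{\psi_1}}\le 1$ and unwinding the Luxemburg norm gives $\int_{W(\xi,h)}\psi_2\big(c\,\psi_1^{-1}(1/h^{N+1+\alpha})\big)\,d\mu\lesssim 1$, hence $\mu(W(\xi,h))\,\psi_2\big(A\psi_1^{-1}(1/h^{N+1+\alpha})\big)=O(1)$ with $A$ depending only on the embedding constant; taking the supremum over $\xi$ yields \prettyref{eq|R_Berg_Orlicz}.

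For part (2), I would apply \prettyref{lem|lem_tech_Berg_Orl} with $E=\mathbb{B}_N$: the hypothesis \prettyref{eq|K_Berg_Orlicz} is exactly the assumption of that lemma (for $h$ small, with some $\eta$, $h_A$), so for every $f$ with $\left\Vert f\right\Vert_{A_\alpha^{\psi_1}}\le 1$ we get $\int_{\mathbb{B}_N}\psi_2(\left|f\right|/B)\,d\mu\le \mu(\mathbb{B}_N)\psi_2(x_A)+C_1\eta\int_{\mathbb{B}_N}\psi_1(\Lambda_f)\,dv_\alpha$. By \prettyref{prop|cont_ma_funct_Berg_Orlicz}, $\left\Vert\Lambda_f\right\Vert_{L_\alpha^{\psi_1}}\le 2B'\left\Vert f\right\Vert_{A_\alpha^{\psi_1}}\le 2B'$, so $\int_{\mathbb{B}_N}\psi_1(\Lambda_f/(2B'))\,dv_\alpha\le 1$; rescaling $f$ by a fixed constant (absorbing $2B'$ into $B$ and using convexity of $\psi_1$) makes the right-hand side bounded by a constant independent of $f$, and a further rescaling by that constant gives $\int_{\mathbb{B}_N}\psi_2(\left|f\right|/B'')\,d\mu\le 1$, i.e.\ $\left\Vert f\right\Vert_{L^{\psi_2}(\mu)}\le B''\left\Vert f\right\Vert_{A_\alpha^{\psi_1}}$, which is the continuous inclusion.

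For part (3), with $\psi_1=\psi_2=\psi$, one direction is immediate: \prettyref{eq|K_Berg_Orlicz} implies \prettyref{eq|R_Berg_Orlicz} since $\varrho_\mu(h)/h^{N+1+\alpha}\le K_{\mu,\alpha}(h)$ trivially. The real content is the converse, and this is the step I expect to be the main obstacle: from $\varrho_\mu(t)\lesssim 1/\psi\big(A\psi^{-1}(1/t^{N+1+\alpha})\big)$ for all small $t$ one must pass to the supremum $K_{\mu,\alpha}(h)=\sup_{0<t\le h}\varrho_\mu(t)/t^{N+1+\alpha}\lesssim (1/h^{N+1+\alpha})/\psi\big(A'\psi^{-1}(1/h^{N+1+\alpha})\big)$. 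Writing $t^{N+1+\alpha}=1/\psi(x)$ with $x=\psi^{-1}(1/t^{N+1+\alpha})\ge x_0$, the ratio becomes $\psi(x)/\psi(Ax)$, and one needs to compare $\psi(x)/\psi(Ax)$ for $x\ge y:=\psi^{-1}(1/h^{N+1+\alpha})$ with $\psi(y)/\psi(A'y)$; here the uniform $\nabla_0$-Condition is exactly what is needed — by the reformulation in the Proposition following \prettyref{def|def_nabla_0_cond} (applied with $\beta=1/A$ or its analogue, the constant $C_\beta$ being independent of $\beta$), $\psi(x)/\psi(Ax)$ is, up to adjusting $A$ by the uniform $\nabla_0$-constant, non-increasing in $x$ on $[x_0,\infty)$ in the relevant sense, so the supremum over $x\ge y$ is controlled by its value at $x=y$. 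Care is needed to handle the passage from the condition written for $\beta>1$ to ratios of the form $\psi(x)/\psi(Ax)$ with $A>1$, and to absorb the various fixed multiplicative constants (including the $2^{N+1+\alpha}$ and $\tilde C$ floating around) into the final constant $A'$; but no genuinely new idea beyond the $\nabla_0$ machinery of \cite{QUEF-LI-LE-RO-PI} is required, and the restriction of all estimates to a neighborhood $(0,h_A)$ of $0$ (equivalently $x\ge x_0$) takes care of the behavior near the origin.
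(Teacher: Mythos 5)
Your treatment of parts (1) and (2) is essentially the paper's own proof: the same Berezin-kernel test functions (normalized so as to lie in the unit ball of $A_{\alpha}^{\psi_{1}}$, via the estimate $\Vert H_{a}\Vert_{A_{\alpha}^{\psi_{1}}}\leq\Vert H_{a}\Vert_{\infty}/\psi_{1}^{-1}\left(\Vert H_{a}\Vert_{\infty}\right)$) for the necessity, and \prettyref{lem|lem_tech_Berg_Orl} with $E=\mathbb{B}_{N}$ plus \prettyref{prop|cont_ma_funct_Berg_Orlicz} for the sufficiency. One small caveat on (2): you must apply the lemma to the rescaled function $f/C_{M}$ \emph{before} invoking it (so that $\int_{\mathbb{B}_{N}}\psi_{1}\left(\Lambda_{f/C_{M}}\right)dv_{\alpha}\leq1$); convexity of $\psi_{1}$ goes the wrong way if you try to bound $\int\psi_{1}\left(\Lambda_{f}\right)dv_{\alpha}$ afterwards. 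The final rescaling uses convexity of $\psi_{2}$. These are phrasing issues, not gaps.

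The genuine gap is in part (3), at exactly the point you identify and then dismiss. The uniform $\nabla_{0}$-Condition gives $\psi\left(x\right)/\psi\left(Bx\right)\leq\psi\left(y\right)/\psi\left(\beta y\right)$ for $x\geq y\geq x_{0}$ only when $B=\beta C$ with $\beta>1$, i.e.\ only when the constant inside $\psi\left(B\,\cdot\right)$ exceeds the uniform constant $C$. But Condition \prettyref{eq|R_Berg_Orlicz} is only given with \emph{some} $A>0$, which is typically small (part (1) produces $A\approx1/\left(8^{N+1+\alpha}\Vert j_{\alpha}\Vert\right)$). For $A\leq1$ the quantity $\psi\left(x\right)/\psi\left(Ax\right)$ is in general increasing and unbounded in $x$ (for $\psi\left(x\right)=e^{x}-1$ it behaves like $e^{\left(1-A\right)x}$), so $\sup_{x\geq y}\psi\left(x\right)/\psi\left(Ax\right)$ is simply not controlled by its value at $x=y$, and no absorption of multiplicative constants into a final $A'$ can repair this: enlarging the constant inside $\psi\left(A\,\cdot\right)$ is not a constant-factor change when $\psi$ grows exponentially. (Your parenthetical ``$\beta=1/A$'' also cannot be right, since $\beta$ must be $>1$ and enters as $A/C$.) This is precisely why the paper inserts a separate Claim before the $\nabla_{0}$ computation: it upgrades \prettyref{eq|R_Berg_Orlicz} from ``some $A$'' to an arbitrarily large constant $B$, at the price of a factor $K\leq1$ inside, i.e.\ $\psi^{-1}\left(\left(K/h\right)^{N+1+\alpha}\right)$ in place of $\psi^{-1}\left(1/h^{N+1+\alpha}\right)$, a factor removed only at the very end by concavity of $\psi^{-1}$. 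Your proposal contains no substitute for this self-improvement step, and it is where all the difficulty of part (3) is concentrated; note that even in the paper this step is delicate (the inequality the Claim needs is not the one concavity of $\psi^{-1}$ gives directly), which underlines that it cannot be settled by the remark that ``no genuinely new idea is required.''
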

Note that embedding $A_{\alpha}^{\psi_{1}}\left(\mathbb{B}_{N}\right)\subset L^{\psi_{2}}\left(\mu\right)$
is continuous as soon as it holds. It is just an application of the
closed graph theorem.
\begin{proof}
[Proof of \prettyref{thm|Embed_Thm_Berg_Cont}]1) For the first
part, let us denote by $C$ the norm of the canonical embedding $j_{\alpha}:A_{\alpha}^{\psi_{1}}\left(\mathbb{B}_{N}\right)\hookrightarrow L^{\psi_{2}}\left(\mu\right)$.
Let $a\in\mathbb{B}_{N}$, $\left|a\right|=1-h$ and $\xi\in\mathbb{S}_{N}$
be such that $a=\left(1-h\right)\xi$. Let us consider the map\begin{eqnarray*}
f_{a} & = & \frac{1}{2^{N+1+\alpha}}\frac{\psi_{1}^{-1}\left(1/h^{N+1+\alpha}\right)}{1/h^{N+1+\alpha}}H_{a}\left(z\right)\\
 & = & \frac{1}{2^{N+1+\alpha}}\frac{\psi_{1}^{-1}\left(1/h^{N+1+\alpha}\right)}{1/h^{N+1+\alpha}}\left(\frac{h\left(2-h\right)}{\left|1-\left(1-h\right)\left\langle z,\xi\right\rangle \right|^{2}}\right)^{N+1+\alpha}\end{eqnarray*}
Recall that $H_{a}$ is the Berezin kernel introduced in \prettyref{prop|point_eval_func_Bergman}.
As we saw in the proof of this latter, $f_{a}$ is in the unit ball
of $A_{\alpha}^{\psi_{1}}\left(\mathbb{B}_{N}\right)$ and our assumption
ensures that\[
\left\Vert j_{\alpha}\left(f_{a}\right)\right\Vert _{L^{\psi_{2}}\left(\mu\right)}=\left\Vert f_{a}\right\Vert _{L^{\psi_{2}}\left(\mu\right)}\leq C\]
so that\begin{equation}
1\geq\int_{\mathbb{B}_{N}}\psi_{2}\left(\frac{\left|f_{a}\right|}{C}\right)d\mu.\label{eq|Berg_Orlicz_Th1_sens_1_estimate}\end{equation}
Let us minorize the right hand side of \prettyref{eq|Berg_Orlicz_Th1_sens_1_estimate}.
We just get a minorization of $\left|f_{a}\right|$ on the non-isotropic
{}``ball'' $S\left(\xi,h\right)$. If $z\in S\left(\xi,h\right)$,
then a straightforward computation yields $\left|1-\left\langle z,a\right\rangle \right|\leq2h$.
Hence, for any $z\in S\left(a,h\right)$, \[
{\displaystyle \left|f_{a}\left(z\right)\right|}\geq\frac{\psi_{1}^{-1}\left(1/h^{N+1+\alpha}\right)}{8^{N+1+\alpha}}.\]
 Therefore\[
1\geq\int_{\mathbb{B}_{N}}\psi_{2}\left(\frac{\left|f\right|}{C}\right)d\mu\geq\psi_{2}\left(\frac{\psi_{1}^{-1}\left(1/h^{N+1+\alpha}\right)}{8^{N+1+\alpha}C}\right)\mu\left(S\left(a,h\right)\right),\]
which is Condition \prettyref{eq|R_Berg_Orlicz} and the first part
of the theorem follows.

2) The second part will need \prettyref{lem|lem_tech_Berg_Orl}. First
of all, we know (\prettyref{prop|cont_ma_funct_Berg_Orlicz}) that
there exists a constant $C_{M}\geq1$ such that, for every $f\in A_{\alpha}^{\psi_{1}}\left(\mathbb{B}_{N}\right)$,
$\left\Vert \Lambda_{f}\right\Vert _{L_{\alpha}^{\psi_{1}}\left(\mathbb{B}_{N}\right)}\leq C_{M}\left\Vert f\right\Vert _{A_{\alpha}^{\psi_{1}}\left(\mathbb{B}_{N}\right)}$.
Let now $f$ be in the unit ball of $A_{\alpha}^{\psi_{1}}\left(\mathbb{B}_{N}\right)$;
it suffices to show that $\left\Vert f\right\Vert _{L^{\psi_{2}}\left(\mu\right)}\leq C_{0}$
for some constant $C_{0}>0$ which does not depend on $f$. Let $\tilde{C}\geq1$
be a constant whose value will be precised later. Condition \prettyref{eq|K_Berg_Orlicz}
is supposed to be realized, that is there exist some constants $A>0$,
$h_{A}\in\left(0,1/2\right]$ and $\eta>0$ such that\begin{equation}
K_{\mu,\alpha}\left(h\right)\leq\eta\frac{1/h^{N+1+\alpha}}{\psi_{2}\left(A\psi_{1}^{-1}\left(1/h^{N+1+\alpha}\right)\right)}\label{eq|Berg_Thm_cont_emb_second_point_assumed}\end{equation}
for any $h\in\left(0,h_{A}\right)$. By using convexity of $\psi_{2}$
and applying \prettyref{lem|lem_tech_Berg_Orl} to $f/C_{M}$ (which
of course still satisfies $\left\Vert f/C_{M}\right\Vert _{A_{\alpha}^{\psi_{1}}}\leq1$)
and $E=\mathbb{B}_{N}$, we get the existence of constants $B>0$,
$x_{A}$ and $C_{1}>0$, all independent of $f$, such that\begin{eqnarray*}
\int_{\mathbb{B}_{N}}\psi_{2}\left(\frac{\left|f\right|}{BC_{M}\tilde{C}}\right)d\mu & \leq & \frac{1}{\tilde{C}}\int_{\mathbb{B}_{N}}\psi_{2}\left(\frac{\left|f\right|}{BC_{M}}\right)d\mu\\
 & \leq & \frac{1}{\tilde{C}}\left(\mu\left(\mathbb{B}_{N}\right)\psi_{2}\left(x_{A}\right)+C_{1}\eta\right).\end{eqnarray*}
Of course, $C_{1}$ may be supposed to be large enough so that $C_{1}\eta\geq1$
and, up to fix $\tilde{C}=\mu\left(\mathbb{B}_{N}\right)\psi_{2}\left(x_{A}\right)+C_{1}\eta\geq1$,
we get ${\displaystyle \left\Vert f\right\Vert _{L^{\psi_{2}}\left(\mu\right)}\leq C_{0}:=BC_{M}\tilde{C}}$
which completes the proof of (2) of \prettyref{thm|Embed_Thm_Berg_Cont}.

3) First, it is clear that Condition \prettyref{eq|K_Berg_Orlicz}
implies Condition \prettyref{eq|R_Berg_Orlicz}. For the converse,
we need the following claim:
\begin{claim*}
Under the notations of the theorem, if Condition \prettyref{eq|R_Berg_Orlicz}
holds, then there exist some $A$ as large as we want and $\eta>0$
such that\begin{equation}
\varrho_{\mu}\left(h\right)\leq\eta\frac{1}{\psi_{2}\left(A\psi_{1}^{-1}\left(h_{A}/h^{N+1+\alpha}\right)\right)}\label{eq|remark_R_refined}\end{equation}
for some $h_{A}$, $0<h_{A}\leq1$ and for any $0<h<h_{A}$.\end{claim*}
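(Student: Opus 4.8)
The plan is to read \prettyref{eq|remark_R_refined} off from Condition \prettyref{eq|R_Berg_Orlicz} by purely Orlicz-function manipulations, the heuristic being that the factor $h_{A}$ inserted inside $\psi_{1}^{-1}$ buys the room needed to replace the (a priori fixed) constant furnished by \prettyref{eq|R_Berg_Orlicz} by a larger $A$; the single non-formal ingredient is the uniform $\nabla_{0}$-Condition on $\psi=\psi_{1}=\psi_{2}$ assumed in this part of the theorem.

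First I would unwind \prettyref{eq|R_Berg_Orlicz}: there are $A_{0},C_{0}>0$ and $h_{0}\in(0,1/2)$ with $\varrho_{\mu}(h)\le C_{0}/\psi_{2}\big(A_{0}\psi_{1}^{-1}(1/h^{N+1+\alpha})\big)$ for $0<h<h_{0}$. The case $A\le A_{0}$ is immediate: since $h_{A}\le1$ and $\psi_{1}^{-1}$ is non-decreasing one has $\psi_{1}^{-1}(h_{A}/h^{N+1+\alpha})\le\psi_{1}^{-1}(1/h^{N+1+\alpha})$, hence $\psi_{2}\big(A\psi_{1}^{-1}(h_{A}/h^{N+1+\alpha})\big)\le\psi_{2}\big(A_{0}\psi_{1}^{-1}(1/h^{N+1+\alpha})\big)$, and \prettyref{eq|remark_R_refined} holds with $h_{A}=h_{0}$, $\eta=C_{0}$. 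So the whole content is to push $A$ beyond $A_{0}$. Writing $w=1/h^{N+1+\alpha}$, $s=\psi^{-1}(w)$, $\sigma=\psi^{-1}(h_{A}w)$, the concavity of $\psi_{1}^{-1}$ (which vanishes at $0$) traps $\sigma$ between $h_{A}s$ and $s$; the matter thus reduces to comparing $\psi(A\sigma)$ with $\psi(A_{0}s)$ under the constraint $\psi(\sigma)=h_{A}\psi(s)$.

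Here I would invoke the Proposition following \prettyref{def|def_nabla_0_cond}: for $\psi$ in the uniform $\nabla_{0}$-class it yields a constant $C\ge1$, the same for every dilation factor $\beta>1$, with $\psi(y)/\psi(\beta Cy)\le\psi(x)/\psi(\beta x)$ for $x_{0}\le x\le y$; that is, the doubling ratios $t\mapsto\psi(\beta t)/\psi(t)$ are essentially non-decreasing, uniformly in $\beta$. Feeding in the bound $\sigma\ge h_{A}s$ and applying this inequality with a $\beta$ chosen from $A$, $A_{0}$ and $C$, and then taking $h_{A}$ small enough as a function of $A$ and $C$ so that the inflation by $C$ is absorbed, one obtains a constant $\eta_{0}$ (depending on $A,A_{0},C,h_{A}$) with $\psi\big(A\psi^{-1}(h_{A}w)\big)\le\eta_{0}\,\psi\big(A_{0}\psi^{-1}(w)\big)$ for all small $h$; combining with the unwound form of \prettyref{eq|R_Berg_Orlicz} gives \prettyref{eq|remark_R_refined} with $\eta=C_{0}\eta_{0}$ and that $h_{A}$ (shrunk below $h_{0}$ if needed). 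I expect the calibration of $h_{A}$ against both the prescribed $A$ and the $\nabla_{0}$-constant $C$ to be the main obstacle: the uniform $\nabla_{0}$-Condition only controls doubling ratios up to the fixed inflation $C$, so $h_{A}$ must be small enough for those inflations to be swallowed, and a naive rescaling (which would only reproduce $A\le A_{0}$) does not suffice. Everything else is routine estimation with convex functions and their concave inverses; note that the monotonicity of $\varrho_{\mu}$ is not used in the claim itself — it enters only in the subsequent passage from \prettyref{eq|remark_R_refined} to Condition \prettyref{eq|K_Berg_Orlicz}, where the same $\nabla_{0}$-inequality lets one pull the supremum defining $K_{\mu,\alpha}$ inside.
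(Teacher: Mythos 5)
Your reduction is correct up to the point you yourself flag as the main obstacle, and that is exactly where the argument breaks down, irreparably. The step you need --- from $\psi(\sigma)=h_{A}\psi(s)$, $\sigma\leq s$, deduce $\psi(A\sigma)\leq\eta_{0}\,\psi(A_{0}s)$ for $A>A_{0}$ by taking $h_{A}$ small and invoking the uniform $\nabla_{0}$-Condition --- is not available. All the uniform $\nabla_{0}$-inequality gives from $\sigma\leq s$ is $\psi(A\sigma)\leq\frac{\psi(\sigma)}{\psi(s)}\psi(ACs)=h_{A}\,\psi(ACs)$: the gain $h_{A}$ sits \emph{outside} $\psi$, while the discrepancy between $\psi(ACs)$ and $\psi(A_{0}s)$ is an unbounded ratio when $\psi$ grows fast, so no calibration of $h_{A}$ against $A$ and $C$ can absorb it. Concretely, take $\psi_{1}=\psi_{2}=\psi(x)=e^{x}-1$, which satisfies the $\Delta^{2}$-Condition and hence, by \prettyref{prop|nabla_0_implies_nabla_2}, the uniform $\nabla_{0}$-Condition; then $\psi^{-1}(h_{A}w)=\ln(1+h_{A}w)$ and, for every fixed $h_{A}\in(0,1]$ and every $A\geq1$ with $A>A_{0}$,
\[
\frac{\psi\left(A\psi^{-1}\left(h_{A}w\right)\right)}{\psi\left(A_{0}\psi^{-1}\left(w\right)\right)}\geq\frac{\left(h_{A}w\right)^{A}}{\left(1+w\right)^{A_{0}}}\xrightarrow[w\rightarrow\infty]{}\infty,
\]
so no constant $\eta_{0}$ exists. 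Multiplying the argument of $\psi^{-1}$ by $h_{A}$ only shifts $\psi^{-1}(w)$ by an additive constant for exponential-type $\psi$, which cannot offset raising the outer dilation from $A_{0}$ to $A$. Your easy case $A\leq A_{0}$ is correct but useless for the intended application, which is precisely to push $A$ beyond the $\nabla_{0}$-constant $C$.

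The deeper point is that the defect is in the statement, not in your particular route: read as ``$A$ as large as we want'', the claim is false, so no proof can succeed. With $\psi=e^{x}-1$, $\gamma=N+1+\alpha$, and $\mu$ the image under $r\mapsto r\xi_{0}$ of $2\gamma(1-r)^{2\gamma-1}\,dr$ on $[0,1)$, one has $\varrho_{\mu}(h)=h^{2\gamma}$, so Condition \prettyref{eq|R_Berg_Orlicz} holds with $A_{0}=2$ (indeed $\psi(2\psi^{-1}(w))=w^{2}+2w\leq3w^{2}$ for $w\geq1$); but for any $A>2$, $h_{A}\in(0,1]$ and $\eta>0$, inequality \prettyref{eq|remark_R_refined} would give $h^{2\gamma}\leq\eta\,h_{A}^{-A}h^{A\gamma}$ for all $0<h<h_{A}$, which fails as $h\rightarrow0$. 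For comparison, the paper's own justification has essentially the same hole you would need to fill: the wanted inequality is equivalent to the \emph{lower} bound $A/\tilde{A}\leq\psi_{1}^{-1}(1/h^{\gamma})/\psi_{1}^{-1}((h_{\tilde{A},A}/h)^{\gamma})$, whereas concavity of $\psi_{1}^{-1}$ only yields the \emph{upper} bound $h_{\tilde{A},A}^{-\gamma}$ for that quotient, and shrinking $h_{\tilde{A},A}$ never produces the lower bound (for $\psi_{1}=e^{x}-1$ the quotient tends to $1$). The boost of the inner constant is genuinely available only when $\psi_{1}^{-1}(\lambda w)/\psi_{1}^{-1}(w)$ can be made uniformly small for large $w$ by taking $\lambda$ small --- true for power-type ($\Delta_{2}$) functions, where the constant can anyway be absorbed into $\eta$, but false for the fast-growing Orlicz functions this paper targets. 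Consequently the claim, and with it the proof of part (3) of \prettyref{thm|Embed_Thm_Berg_Cont} in the regime where the $A$ furnished by \prettyref{eq|R_Berg_Orlicz} does not already exceed the uniform $\nabla_{0}$-constant $C$, requires either an additional hypothesis or a genuinely different argument.
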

\begin{proof}
[Proof of the claim]We assume that Condition\begin{equation}
\varrho_{\mu}\left(h\right)\leq\eta\frac{1}{\psi_{2}\left(\tilde{A}\psi_{1}^{-1}\left(1/h^{N+1+\alpha}\right)\right)}\label{eq|proof_3_Berg_Cont_Emb}\end{equation}
holds for some $\tilde{A}\geq0$, $\tilde{h_{A}}$, $0<\tilde{h_{A}}\leq1$,
$\eta>0$ and any $0<h<\tilde{h_{A}}$. We fix $A>1$ and we look
for some constant $h_{\tilde{A},A}\leq1$ such that\begin{equation}
\frac{1}{\psi_{2}\left(\tilde{A}\psi_{1}^{-1}\left(1/h^{N+1+\alpha}\right)\right)}\leq\frac{1}{\psi_{2}\left(A\psi_{1}^{-1}\left(\left(h_{\tilde{A},A}/h\right)^{N+1+\alpha}\right)\right)}\label{eq|R_Berg_refined-bis}\end{equation}
for $0<h<h_{\tilde{A},A}$. Now it is easy to verify that Inequality
\prettyref{eq|R_Berg_refined-bis} is equivalent to \[
\frac{A}{\tilde{A}}\leq\frac{\psi_{1}^{-1}\left(1/h^{N+1+\alpha}\right)}{\psi_{1}^{-1}\left(\left(h_{\tilde{A},A}/h\right)^{N+1+\alpha}\right)}\leq\frac{1}{h_{\tilde{A},A}^{N+1+\alpha}}\]
by concavity of $\psi^{-1}$. Then the claim follows by choosing $h_{\tilde{A},A}$
small enough.
\end{proof}
We come back to the proof of the third point. Let suppose that $\psi$
belongs to the uniform $\nabla_{0}$-class and let $A>0$, $h_{A}\in\left(0,1\right]$
and $\eta>0$ be such that\[
\varrho_{\mu}\left(h\right)\leq\eta\frac{1}{\psi\left(A\psi^{-1}\left(1/h^{N+1+\alpha}\right)\right)}\]
for every $h\in\left(0,h_{A}\right)$. The previous claim says that
we can find $B\geq1$ and $0<K=K_{B,A}\leq1$ such that \[
\varrho_{\mu}\left(h\right)\leq\eta\frac{1}{\psi\left(B\psi^{-1}\left(\left(K/h\right)^{N+1+\alpha}\right)\right)}\]
for every $0<h<K$. Therefore, we have\begin{multline*}
K_{\mu,\alpha}\left(h\right)=\sup_{0<t\leq h}\frac{\varrho_{\mu}\left(t\right)}{t^{N+1+\alpha}}\leq\eta\sup_{0<t\leq h}\frac{1/t^{N+1+\alpha}}{\psi\left(B\psi^{-1}\left(\left(K/t\right)^{N+1+\alpha}\right)\right)}\\
=\eta\sup_{x\geq\psi^{-1}\left(\left(K/h\right)^{N+1+\alpha}\right)}\frac{1}{K^{N+1+\alpha}}\frac{\psi\left(x\right)}{\psi\left(Bx\right)}\end{multline*}
for any $0<h\leq K$. Let $C$ be the constant induced by the uniform
$\nabla_{0}$-Condition satisfied by $\psi$ and let $\beta$ be such
that $B=\beta C$. The claim allows us to take $B$ large enough and
therefore to assume that $\beta>1$. We then have, since $\psi$ satisfies
the uniform $\nabla_{0}$-Condition,\[
\frac{\psi\left(\beta\psi^{-1}\left(\left(K/h\right)^{N+1+\alpha}\right)\right)}{\left(K/h\right)^{N+1+\alpha}}\leq\frac{\psi\left(Bx\right)}{\psi\left(x\right)}\]
for any $x\geq\psi^{-1}\left(\left(K/h\right)^{N+1+\alpha}\right)$.
Hence, for every $0<h\leq K$,\[
K_{\mu,\alpha}\left(h\right)\leq\eta\frac{1/h^{N+1+\alpha}}{\psi\left(\beta\psi^{-1}\left(\left(K/h\right)^{N+1+\alpha}\right)\right)}\leq\eta\frac{1/h^{N+1+\alpha}}{\psi\left(\beta K^{N+1+\alpha}\psi^{-1}\left(1/h^{N+1+\alpha}\right)\right)}\]
by concavity of $\psi^{-1}$, and Condition \prettyref{eq|K_Berg_Orlicz}
is satisfied.
\end{proof}
\smallskip{}
The third point of the previous theorem leads us to define $\left(\psi,\alpha\right)$\textit{-Bergman-Carleson}
measures on the ball:
\begin{stdefn}
Let $\mu$ be a positive Borel measure on $\mathbb{B}_{N}$ and let
$\psi$ be an Orlicz function. We say that $\mu$ is a $\left(\psi,\alpha\right)$-Bergman-Carleson
measure if there exists some $A>0$, such that \begin{equation}
\mu\left(W\left(\xi,h\right)\right)=O_{h\rightarrow0}\left(\frac{1}{\psi\left(A\psi^{-1}\left(1/h^{N+1+\alpha}\right)\right)}\right)\label{eq|def_Berg_Carle_meas}\end{equation}
uniformly with respect to $\xi\in\mathbb{S}_{N}$.
\end{stdefn}
We notice that \prettyref{eq|def_Berg_Carle_meas} is equivalent to
\prettyref{eq|R_Berg_Orlicz}. Therefore, we can state the following
corollary:
\begin{stcor}
\label{cor|Cor_Berg_nabla_cont}Let $\mu$ be a finite positive Borel
measure on $\mathbb{B}_{N}$ and let $\psi$ be an Orlicz function
satisfying the uniform $\nabla_{0}$-Condition. Inclusion $A_{\alpha}^{\psi}\left(\mathbb{B}_{N}\right)\hookrightarrow L^{\psi}\left(\mu\right)$
holds (and is continuous) if and only if $\mu$ is a $\left(\psi,\alpha\right)$-Bergman-Carleson
measure.
\end{stcor}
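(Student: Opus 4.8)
The plan is to deduce this statement directly from \prettyref{thm|Embed_Thm_Berg_Cont}, so the proof amounts to assembling its three parts together with the observation that the defining condition of a $\left(\psi,\alpha\right)$-Bergman-Carleson measure is nothing but \prettyref{eq|R_Berg_Orlicz}. Indeed, since $\varrho_{\mu}\left(h\right)=\sup_{\xi\in\mathbb{S}_{N}}\mu\left(W\left(\xi,h\right)\right)$, the estimate \prettyref{eq|def_Berg_Carle_meas}, required to hold uniformly in $\xi\in\mathbb{S}_{N}$, is equivalent to \prettyref{eq|R_Berg_Orlicz}; this is exactly the content of the remark preceding the corollary.

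Now I would argue the two implications. Suppose first that $\mu$ is a $\left(\psi,\alpha\right)$-Bergman-Carleson measure, i.e. \prettyref{eq|R_Berg_Orlicz} holds with $\psi_{1}=\psi_{2}=\psi$. Since $\psi$ satisfies the uniform $\nabla_{0}$-Condition, point (3) of \prettyref{thm|Embed_Thm_Berg_Cont} gives that \prettyref{eq|K_Berg_Orlicz} holds as well, and then point (2) of the same theorem yields that $A_{\alpha}^{\psi}\left(\mathbb{B}_{N}\right)\subset L^{\psi}\left(\mu\right)$ with continuous inclusion. Conversely, assume the inclusion $A_{\alpha}^{\psi}\left(\mathbb{B}_{N}\right)\hookrightarrow L^{\psi}\left(\mu\right)$ holds; by the closed graph theorem it is automatically continuous (as noted right after the statement of \prettyref{thm|Embed_Thm_Berg_Cont}), so point (1) of that theorem applies and produces some $A>0$ with \prettyref{eq|R_Berg_Orlicz}, that is, $\mu$ is a $\left(\psi,\alpha\right)$-Bergman-Carleson measure.

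There is no genuine obstacle here: all the analytic work --- the maximal operator bound \prettyref{prop|cont_ma_funct_Berg_Orlicz}, the Bergman--Carleson inequality \prettyref{thm|Bergman_Carleson_Theorem}, the technical \prettyref{lem|lem_tech_Berg_Orl}, and in particular the delicate passage from \prettyref{eq|R_Berg_Orlicz} to \prettyref{eq|K_Berg_Orlicz} via the uniform $\nabla_{0}$-Condition --- has already been carried out inside \prettyref{thm|Embed_Thm_Berg_Cont}. The only point deserving a line of care is the identification of the Carleson-window formulation with $\varrho_{\mu}$ and, if one prefers to phrase things with non-isotropic balls $S\left(\xi,h\right)$, the use of the inclusions $S\left(\xi,C_{1}h\right)\subset W\left(\xi,h\right)\subset S\left(\xi,C_{2}h\right)$ together with the concavity of $\psi^{-1}$ to absorb the constants $C_{1},C_{2}$ into $A$.
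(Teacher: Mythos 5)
Your proposal is correct and follows exactly the route the paper intends: the corollary is obtained by identifying Condition \prettyref{eq|def_Berg_Carle_meas} with \prettyref{eq|R_Berg_Orlicz} (via $\varrho_{\mu}$) and then combining parts (1), (2) and (3) of \prettyref{thm|Embed_Thm_Berg_Cont}, the continuity of the inclusion being automatic by the closed graph theorem. The paper gives no separate argument beyond the remark preceding the corollary, so your assembly of the implications is precisely its proof.
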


\subsection{\label{subsub|Berg_Comp_emb}Compactness of the canonical embedding
$A_{\alpha}^{\psi_{1}}\left(\mathbb{B}_{N}\right)\hookrightarrow L^{\psi_{2}}\left(\mu\right)$.}

For the study of compactness, we usually need some compactness criterion.
\begin{stprop}
\label{prop|cor_crit_comp_Berg}Let $\mu$ be a finite positive measure
on $\mathbb{B}_{N}$ and let $\psi_{1}$ and $\psi_{2}$ be two Orlicz
functions. We suppose that the canonical embedding $j_{\mu,\alpha}:A_{\alpha}^{\psi_{1}}\left(\mathbb{B}_{N}\right)\hookrightarrow L^{\psi_{2}}\left(\mu\right)$
holds and is bounded. The three following assertions are equivalent:
\begin{enumerate}
\item $j_{\mu,\alpha}:A_{\alpha}^{\psi_{1}}\left(\mathbb{B}_{N}\right)\hookrightarrow L^{\psi_{2}}\left(\mu\right)$
is compact;
\item Every sequence in the unit ball of $A_{\alpha}^{\psi_{1}}\left(\mathbb{B}_{N}\right)$,
which is convergent to $0$ uniformly on every compact subset of $\mathbb{B}_{N}$,
is strongly convergent to $0$ in $L^{\psi_{2}}\left(\mu\right)$.
\item $\lim_{r\rightarrow1^{-}}\left\Vert I_{r}\right\Vert =0$, where $I_{r}\left(f\right)=f.\chi_{\mathbb{B}_{N}\setminus r\overline{\mathbb{B}_{N}}}$.
\end{enumerate}
\end{stprop}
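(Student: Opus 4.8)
The plan is the usual ``soft'' argument for compactness of an embedding of a space of holomorphic functions, the single non-formal ingredient being that each of the three conditions already forces $A_{\alpha}^{\psi_{1}}\left(\mathbb{B}_{N}\right)\subset M^{\psi_{2}}\left(\mu\right)$; I will prove the cycle $(1)\Rightarrow(2)\Rightarrow(3)\Rightarrow(1)$. Four facts will be used repeatedly. (i) By \prettyref{prop|point_eval_func_Bergman} the point evaluations of $A_{\alpha}^{\psi_{1}}\left(\mathbb{B}_{N}\right)$ are bounded uniformly on compact subsets of $\mathbb{B}_{N}$, so every bounded sequence in $A_{\alpha}^{\psi_{1}}\left(\mathbb{B}_{N}\right)$ has a subsequence converging uniformly on compact sets to some $f$, with $\left\Vert f\right\Vert_{A_{\alpha}^{\psi_{1}}}\leq\liminf_{n}\left\Vert f_{n}\right\Vert_{A_{\alpha}^{\psi_{1}}}$ by Fatou's lemma applied to the convex function $\psi_{1}$. (ii) Since $L^{\psi_{2}}\left(\mu\right)\subset L^{1}\left(\mu\right)$, norm convergence in $L^{\psi_{2}}\left(\mu\right)$ implies convergence in $\mu$-measure; hence if $f_{n_{k}}\to f$ uniformly on compact sets and $j_{\mu,\alpha}\left(f_{n_{k}}\right)\to g$ in $L^{\psi_{2}}\left(\mu\right)$, then $g=f$ $\mu$-almost everywhere. (iii) The dilations $f_{\rho}\left(z\right)=f\left(\rho z\right)$, $0<\rho<1$, are contractions of $A_{\alpha}^{\psi_{1}}\left(\mathbb{B}_{N}\right)$ (integrate the subharmonic function $\psi_{1}\left(\left|f\right|/c\right)$ over spheres centred at the origin, whose means are non-decreasing in the radius), they are bounded on $\mathbb{B}_{N}$, and $f_{\rho}\to f$ uniformly on compact sets as $\rho\to1$. (iv) $L^{\infty}\left(\mu\right)\subset M^{\psi_{2}}\left(\mu\right)$, the space $M^{\psi_{2}}\left(\mu\right)$ is closed in $L^{\psi_{2}}\left(\mu\right)$, and on $M^{\psi_{2}}\left(\mu\right)$ the Luxemburg norm is order continuous: if $g\in M^{\psi_{2}}\left(\mu\right)$ and the sets $E_{n}$ decrease to a $\mu$-null set, then $\left\Vert g\chi_{E_{n}}\right\Vert_{\psi_{2}}\to0$ (dominated convergence applied to $\int\psi_{2}\left(g\chi_{E_{n}}/c\right)d\mu$, legitimate for \emph{every} $c>0$).

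For $(1)\Rightarrow(2)$, let $\left(f_{n}\right)$ lie in the unit ball of $A_{\alpha}^{\psi_{1}}\left(\mathbb{B}_{N}\right)$ and tend to $0$ uniformly on compact sets; if $\left\Vert j_{\mu,\alpha}\left(f_{n}\right)\right\Vert_{\psi_{2}}$ does not tend to $0$, pass to a subsequence bounded below by some $\delta>0$, extract by compactness a $\left\Vert\cdot\right\Vert_{\psi_{2}}$-convergent subsequence of $\left(j_{\mu,\alpha}\left(f_{n}\right)\right)$, and note that its limit is $0$ by (ii), a contradiction. I then deduce $A_{\alpha}^{\psi_{1}}\left(\mathbb{B}_{N}\right)\subset M^{\psi_{2}}\left(\mu\right)$: it suffices to treat $f$ in the unit ball, and for any $\rho_{k}\uparrow1$ the functions $\tfrac{1}{2}\left(f_{\rho_{k}}-f\right)$ lie in the unit ball (by (i) and (iii)) and tend to $0$ uniformly on compact sets, so $(2)$ gives $\left\Vert f_{\rho_{k}}-f\right\Vert_{\psi_{2}}\to0$; since $f_{\rho_{k}}\in H^{\infty}\left(\mathbb{B}_{N}\right)\subset M^{\psi_{2}}\left(\mu\right)$ and $M^{\psi_{2}}\left(\mu\right)$ is closed, $f\in M^{\psi_{2}}\left(\mu\right)$.

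For $(2)\Rightarrow(3)$, suppose $\left\Vert I_{r}\right\Vert$ does not tend to $0$: pick $r_{n}\uparrow1$ and $\left(f_{n}\right)$ in the unit ball with $\left\Vert f_{n}\chi_{\mathbb{B}_{N}\setminus r_{n}\overline{\mathbb{B}_{N}}}\right\Vert_{\psi_{2}}\geq\delta>0$, and extract $f_{n_{j}}\to f$ uniformly on compact sets. Then $\tfrac{1}{2}\left(f_{n_{j}}-f\right)$ lies in the unit ball and tends to $0$ uniformly on compact sets, so $\left\Vert f_{n_{j}}-f\right\Vert_{\psi_{2}}\to0$ by $(2)$, while $f\in A_{\alpha}^{\psi_{1}}\left(\mathbb{B}_{N}\right)\subset M^{\psi_{2}}\left(\mu\right)$ gives $\left\Vert f\chi_{\mathbb{B}_{N}\setminus r_{n_{j}}\overline{\mathbb{B}_{N}}}\right\Vert_{\psi_{2}}\to0$ by (iv); the inequality $\left\Vert f_{n_{j}}\chi_{\mathbb{B}_{N}\setminus r_{n_{j}}\overline{\mathbb{B}_{N}}}\right\Vert_{\psi_{2}}\leq\left\Vert f_{n_{j}}-f\right\Vert_{\psi_{2}}+\left\Vert f\chi_{\mathbb{B}_{N}\setminus r_{n_{j}}\overline{\mathbb{B}_{N}}}\right\Vert_{\psi_{2}}$ then contradicts the lower bound. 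For $(3)\Rightarrow(1)$, write $j_{\mu,\alpha}=R_{r}+I_{r}$ with $R_{r}\left(f\right)=f\chi_{r\overline{\mathbb{B}_{N}}}$, regarded in $L^{\psi_{2}}\left(\mu\right)$ (this is licit since $f\chi_{r\overline{\mathbb{B}_{N}}}$ is bounded and $\mu$ is finite). Each $R_{r}$ is compact: a bounded sequence in $A_{\alpha}^{\psi_{1}}\left(\mathbb{B}_{N}\right)$ has a subsequence converging uniformly on the compact set $r\overline{\mathbb{B}_{N}}$, hence in $L^{\infty}\left(\mu\right)$ after multiplication by $\chi_{r\overline{\mathbb{B}_{N}}}$, and $L^{\infty}\left(\mu\right)\hookrightarrow L^{\psi_{2}}\left(\mu\right)$ is continuous. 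Since $\left\Vert j_{\mu,\alpha}-R_{r}\right\Vert=\left\Vert I_{r}\right\Vert\to0$ by $(3)$, $j_{\mu,\alpha}$ is a norm-limit of compact operators, hence compact.

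The delicate step, and the only place where the Luxemburg norm does not behave naively, is the inclusion $A_{\alpha}^{\psi_{1}}\left(\mathbb{B}_{N}\right)\subset M^{\psi_{2}}\left(\mu\right)$ --- equivalently, that the boundary tail $\left\Vert f\chi_{\mathbb{B}_{N}\setminus r\overline{\mathbb{B}_{N}}}\right\Vert_{\psi_{2}}$ of a fixed $f$ tends to $0$. This is \emph{false} under mere boundedness of $j_{\mu,\alpha}$ (take $\psi_{1}=\psi_{2}$ not satisfying the $\Delta_{2}$-condition and $\mu=v_{\alpha}$, so that $A_{\alpha}^{\psi}\left(\mathbb{B}_{N}\right)$ is strictly larger than the Morse--Transue subspace of $L_{\alpha}^{\psi}\left(\mathbb{B}_{N}\right)$), so one genuinely has to extract it from the compactness; the trick is to run the argument through the dilations $f_{\rho}$, which are holomorphic and bounded and therefore simultaneously accessible to the normal-family machinery and already members of $M^{\psi_{2}}\left(\mu\right)$. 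The remaining verifications --- (ii), (iii), (iv), and the compactness of $R_{r}$ --- are the standard facts about Bergman spaces and Morse--Transue spaces.
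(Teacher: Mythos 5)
Your proposal is correct, and it overlaps with the paper's argument only in part, so a comparison is worth recording. The implication $(1)\Rightarrow(2)$ is proved exactly as in the paper (compactness plus the fact that norm convergence in $L^{\psi_{2}}\left(\mu\right)$ forces the limit to agree $\mu$-a.e.\ with the pointwise limit). After that the routes diverge: the paper closes the equivalence by proving $(2)\Rightarrow(1)$ with a Montel--Fatou normal-families argument and $(3)\Rightarrow(2)$ by the $\limsup_{r}\limsup_{n}$ estimate $\left\Vert f_{n}\right\Vert _{L^{\psi_{2}}\left(\mu\right)}\lesssim\left\Vert I_{r}\right\Vert +\left\Vert f_{n}\chi_{\overline{r\mathbb{B}_{N}}}\right\Vert _{\infty}$, whereas you close the cycle with $(3)\Rightarrow(1)$ directly, writing $j_{\mu,\alpha}=R_{r}+I_{r}$ with $R_{r}f=f\chi_{r\overline{\mathbb{B}_{N}}}$ compact and $\left\Vert I_{r}\right\Vert \rightarrow0$, so that $j_{\mu,\alpha}$ is a norm limit of compact operators; this is a clean operator-theoretic substitute for the paper's $(2)\Rightarrow(1)$ step. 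The more interesting difference is in $(2)\Rightarrow(3)$: both proofs argue by contradiction and need, for the fixed limit function $f$, that $\left\Vert f\chi_{\mathbb{B}_{N}\setminus r\overline{\mathbb{B}_{N}}}\right\Vert _{\psi_{2}}\rightarrow0$. The paper dispatches this with an appeal to ``Lebesgue's theorem'', which for a Luxemburg norm only yields $\limsup\left\Vert f\chi_{E_{n}}\right\Vert _{\psi_{2}}\leq c$ for those $c$ with $\int\psi_{2}\left(\left|f\right|/c\right)d\mu<\infty$, i.e.\ it really requires $f$ to have absolutely continuous norm (to lie in $M^{\psi_{2}}\left(\mu\right)$), which is not automatic when $\psi_{2}$ fails the $\Delta_{2}$-Condition. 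You identify this as the delicate point and repair it by first deriving, from $(2)$ alone and the dilations $f_{\rho}$ (contractions of $A_{\alpha}^{\psi_{1}}\left(\mathbb{B}_{N}\right)$, bounded, converging to $f$ uniformly on compacta), that $A_{\alpha}^{\psi_{1}}\left(\mathbb{B}_{N}\right)\subset M^{\psi_{2}}\left(\mu\right)$, after which order continuity of the norm on $M^{\psi_{2}}\left(\mu\right)$ gives the tail estimate. So your version buys extra rigor at exactly the step the paper glosses over, at the mild cost of importing standard Morse--Transue facts; the paper's version buys brevity and stays entirely within the normal-family toolkit. (Two cosmetic remarks: the inclusion $A_{\alpha}^{\psi_{1}}\left(\mathbb{B}_{N}\right)\subset M^{\psi_{2}}\left(\mu\right)$ is logically a consequence of $(2)$ and is used only in $(2)\Rightarrow(3)$, so it would sit more naturally there than at the end of your $(1)\Rightarrow(2)$ paragraph; and you prove only the cycle $(1)\Rightarrow(2)\Rightarrow(3)\Rightarrow(1)$, which of course suffices, while the paper proves the four implications pairwise.)
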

\begin{proof}
(1)$\,\Rightarrow\,$(2) We first assume that $j_{\mu,\alpha}$ is
compact. Let $\left(f_{n}\right)_{n}$ be a sequence in the unit ball
of $A_{\alpha}^{\psi_{1}}\left(\mathbb{B}_{N}\right)$, which is convergent
to $0$ uniformly on every compact subset of $\mathbb{B}_{N}$. Of
course, $j_{\mu,\alpha}\left(f_{n}\right)$ converges to $0$ everywhere.
By contradiction, suppose up to extract a subsequence that $\liminf_{n}\left\Vert j_{\mu,\alpha}\left(f_{n}\right)\right\Vert _{L^{\psi_{2}}\left(\mu\right)}>0$.
By compactness of $j_{\mu,\alpha}$, up to an other extraction, we
may assume that $\left(j_{\mu,\alpha}\left(f_{n}\right)\right)_{n}$
strongly converges to some $g\in L^{\psi_{2}}\left(\mu\right)$ and
we must have $\left\Vert g\right\Vert _{L^{\psi_{2}}\left(\mu\right)}>0$.
As convergence in norm in $L^{\psi_{2}}\left(\mu\right)$ entails
$\mu$-almost everywhere convergence, we get a contradiction.

(2)$\,\Rightarrow\,$(1) Conversely, let $\left(f_{n}\right)_{n}$
be a sequence in the unit ball of $A_{\alpha}^{\psi_{1}}\left(\mathbb{B}_{N}\right)$.
In particular, $\left(f_{n}\right)_{n}$ is in the unit ball of $A_{\alpha}^{1}\left(\mathbb{B}_{N}\right)$
and the Cauchy's formula ensures that $\left(f_{n}\right)_{n}$ is
uniformly bounded on every compact subset of $\mathbb{B}_{N}$, so
that, up to an extraction, we may suppose that $\left(f_{n}\right)_{n}$
is uniformly convergent on compact subsets of $\mathbb{B}_{N}$ to
$f$ holomorphic in $\mathbb{B}_{N}$, by Montel's theorem. Now, Lebesgue's
theorem ensures that $f\in A_{\alpha}^{\psi_{1}}\left(\mathbb{B}_{N}\right)$
and, up to divide by a constant large enough, we may assume that $f_{n}-f$,
which converges to $0$ on every compact subset of $\mathbb{B}_{N}$,
is in the unit ball of $A_{\alpha}^{\psi_{1}}\left(\mathbb{B}_{N}\right)$.
Therefore, our assumption implies that $\left(j_{\mu,\alpha}\left(f_{n}\right)-j_{\mu,\alpha}\left(f\right)\right)_{n}$
converges to $0$ in the norm of $L^{\psi_{2}}\left(\mu\right)$ and
$j_{\mu,\alpha}$ is compact, as expected.

(3)$\,\Rightarrow\,$(2) Let $\left(f_{n}\right)_{n}$ be in the unit
ball of $A_{\alpha}^{\psi_{1}}\left(\mathbb{B}_{N}\right)$ converging
to $0$ uniformly on every compact subset of $\mathbb{B}_{N}$. We
have\begin{eqnarray*}
\limsup_{n\rightarrow\infty}\left\Vert f_{n}\right\Vert _{L^{\psi_{2}}\left(\mu\right)} & = & \limsup_{r\rightarrow1^{-}}\limsup_{n\rightarrow\infty}\left\Vert I_{r}\left(f_{n}\right)+f_{n}.\chi_{\overline{r\mathbb{B}_{N}}}\right\Vert _{L^{\psi_{2}}\left(\mu\right)}\\
 & \lesssim & \limsup_{r\rightarrow1^{-}}\left\Vert I_{r}\right\Vert +\limsup_{r\rightarrow1^{-}}\limsup_{n\rightarrow\infty}\left\Vert f_{n}.\chi_{\overline{r\mathbb{B}_{N}}}\right\Vert _{\infty}\\
 & = & 0.\end{eqnarray*}

(2)$\,\Rightarrow\,$(3) By contradiction suppose that (3) is not
satisfied so that there exist a constant $\delta>0$ and a sequence
$\left(f_{n}\right)_{n}$ in the unit ball of $A_{\alpha}^{\psi_{1}}\left(\mathbb{B}_{N}\right)$
such that ${\displaystyle \left\Vert I_{\left(1-\frac{1}{n}\right)}\left(f_{n}\right)\right\Vert }_{L^{\psi_{2}}}\geq\delta$,
for every $n\geq0$. Up to an extraction, we may suppose that $\left(f_{n}\right)_{n}$
converges uniformly on compact subsets of $\mathbb{B}_{N}$ to $f\in A_{\alpha}^{\psi_{1}}\left(\mathbb{B}_{N}\right)$.
By Lebesgue's theorem, $\lim_{n\rightarrow\infty}\left\Vert I_{\left(1-\frac{1}{n}\right)}\left(f\right)\right\Vert _{L^{\psi_{2}}}=0$;
thus, for $n$ large enough,\[
\left\Vert f_{n}-f\right\Vert _{L^{\psi_{2}}}\geq\left\Vert I_{\left(1-\frac{1}{n}\right)}\left(f_{n}-f\right)\right\Vert _{L^{\psi_{2}}}\geq\delta/2\]
which contradicts (2).
\end{proof}
As for the boundedness, we state our embedding compactness theorem
for weighted Bergman-Orlicz spaces as follows:
\begin{stthm}
\label{thm|Embed_Berg_orlizc_Comp}Let $\mu$ be a finite positive
Borel measure on $\mathbb{B}_{N}$, and let $\psi_{1}$ and $\psi_{2}$
be two Orlicz functions.
\begin{enumerate}
\item If the inclusion $A_{\alpha}^{\psi_{1}}\left(\mathbb{B}_{N}\right)\subset L^{\psi_{2}}\left(\mu\right)$
holds and is compact, then for every $A>0$ we have\begin{equation}
\varrho_{\mu}\left(h\right)=o_{h\rightarrow0}\left(\frac{1}{\psi_{2}\left(A\psi_{1}^{-1}\left(1/h^{N+1+\alpha}\right)\right)}\right).\label{eq|R_0_Berg_Orlicz}\end{equation}

\item If\begin{equation}
K_{\mu,\alpha}\left(h\right)=o_{h\rightarrow0}\left(\frac{1/h^{N+1+\alpha}}{\psi_{2}\left(A\psi_{1}^{-1}\left(1/h^{N+1+\alpha}\right)\right)}\right)\label{eq|K_0_Berg_Orlicz}\end{equation}
for every $A>0$, then $A_{\alpha}^{\psi_{1}}\left(\mathbb{B}_{N}\right)$
embeds compactly in $L^{\psi_{2}}\left(\mu\right)$.
\item If in addition $\psi_{1}=\psi_{2}=\psi$ satisfies the $\nabla_{0}$-Condition,
then Conditions \prettyref{eq|R_0_Berg_Orlicz} and \prettyref{eq|K_0_Berg_Orlicz}
are equivalent.
\end{enumerate}
\end{stthm}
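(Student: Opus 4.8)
The plan is to follow the three-part layout of \prettyref{thm|Embed_Thm_Berg_Cont}, turning each ``$O$'' into an ``$o$'', and to feed the outcome into the compactness criterion \prettyref{prop|cor_crit_comp_Berg}. Two features absent from the boundedness theorem will be exploited: (a) the Berezin-type test functions converge to $0$ uniformly on compact subsets of $\mathbb{B}_N$, so compactness of the embedding forces their $L^{\psi_2}(\mu)$-norms to tend to $0$; and (b) in the little-oh regime the multiplicative constant is at our disposal, which is what will let the plain $\nabla_0$-Condition be enough in part (3). Throughout, the direction \prettyref{eq|K_0_Berg_Orlicz}$\Rightarrow$\prettyref{eq|R_0_Berg_Orlicz} in (3) is immediate from $\varrho_\mu(h)/h^{N+1+\alpha}\le K_{\mu,\alpha}(h)$.

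For (1), I would re-use the functions $f_a$ from the first part of the proof of \prettyref{thm|Embed_Thm_Berg_Cont}: writing $a=(1-h)\xi$ with $\xi\in\mathbb{S}_N$ and $u=1/h^{N+1+\alpha}$, $f_a$ lies in the unit ball of $A_\alpha^{\psi_1}(\mathbb{B}_N)$, satisfies $|f_a|\ge\psi_1^{-1}(u)/8^{N+1+\alpha}$ on $S(a,h)$, and $f_a\to 0$ uniformly on compact subsets of $\mathbb{B}_N$ as $h\to 0$ (on a fixed compact $|f_a|$ is at most a constant times $\psi_1^{-1}(u)/u^2$, which tends to $0$). If \prettyref{eq|R_0_Berg_Orlicz} failed, there would be $A>0$, $\varepsilon_0>0$, $h_n\to 0$ (with associated $u_n$) and $\xi_n\in\mathbb{S}_N$ such that $\mu(S(\xi_n,h_n))\,\psi_2(A\psi_1^{-1}(u_n))\ge\varepsilon_0$. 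Assertion (2) of \prettyref{prop|cor_crit_comp_Berg} applied to $(f_{a_n})$ gives $\varepsilon_n:=\|f_{a_n}\|_{L^{\psi_2}(\mu)}\to 0$, and $\int_{\mathbb{B}_N}\psi_2(|f_{a_n}|/\varepsilon_n)\,d\mu\le 1$ forces $\mu(S(a_n,h_n))\le 1/\psi_2\left(\psi_1^{-1}(u_n)/(8^{N+1+\alpha}\varepsilon_n)\right)$. Since $\psi_2(\lambda t)\ge\lambda\psi_2(t)$ for $\lambda\ge 1$ (convexity of $\psi_2$, $\psi_2(0)=0$) and $1/(8^{N+1+\alpha}\varepsilon_n A)\to\infty$, one gets $\mu(S(a_n,h_n))\,\psi_2(A\psi_1^{-1}(u_n))\le 8^{N+1+\alpha}\varepsilon_n A\to 0$, contradicting the choice of $\varepsilon_0$; here, as in \prettyref{thm|Embed_Thm_Berg_Cont}, one uses the equivalence between $S(\xi,h)$ and $W(\xi,h)$.

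For (2), I would check assertion (3) of \prettyref{prop|cor_crit_comp_Berg}, i.e. $\|I_r\|\to 0$ as $r\to 1^-$. Let $C_M\ge 1$ be the constant of \prettyref{prop|cont_ma_funct_Berg_Orlicz}, so $\|f\|_{A_\alpha^{\psi_1}}\le 1$ yields $\int_{\mathbb{B}_N}\psi_1(\Lambda_f/C_M)\,dv_\alpha\le 1$ (note $\Lambda_{f/C_M}=\Lambda_f/C_M$). Given $\varepsilon>0$, take $A$ so large that the constant $B=B(A)$ of \prettyref{lem|lem_tech_Berg_Orl} --- which by the proof of that lemma equals $6\cdot 4^{N+\alpha}/A$, hence is small for $A$ large --- satisfies $BC_M\le\varepsilon$. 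By \prettyref{eq|K_0_Berg_Orlicz} for this $A$, for every $\eta>0$ there is $h_A\in(0,1/2)$ with $K_{\mu,\alpha}(h)\le\eta\frac{1/h^{N+1+\alpha}}{\psi_2(A\psi_1^{-1}(1/h^{N+1+\alpha}))}$ on $(0,h_A)$; fix $\eta$ with $C_1\eta\le 1/2$, $C_1$ the absolute constant of the lemma. Applying \prettyref{lem|lem_tech_Berg_Orl} to $f/C_M$ and $E=E_r:=\mathbb{B}_N\setminus r\overline{\mathbb{B}_N}$ (and $\int_{\mathbb{B}_N}\psi_1(\Lambda_{f/C_M})\,dv_\alpha\le 1$) gives, for every $f$ in the unit ball of $A_\alpha^{\psi_1}(\mathbb{B}_N)$,
\[
\int_{E_r}\psi_2\left(\frac{|f|}{\varepsilon}\right)\,d\mu\le\int_{E_r}\psi_2\left(\frac{|f|}{BC_M}\right)\,d\mu\le\mu(E_r)\,\psi_2(x_A)+C_1\eta .
\]
Since $\mu$ is finite and $E_r\downarrow\emptyset$ as $r\to 1^-$, we have $\mu(E_r)\,\psi_2(x_A)\le 1/2$ for $r$ close enough to $1$, so $\int_{E_r}\psi_2(|f|/\varepsilon)\,d\mu\le 1$, i.e. $\|I_r\|\le\varepsilon$. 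As $r\mapsto\|I_r\|$ is non-increasing, $\lim_{r\to 1^-}\|I_r\|=0$, and \prettyref{prop|cor_crit_comp_Berg} gives the compactness of the embedding.

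For (3), the remaining implication \prettyref{eq|R_0_Berg_Orlicz}$\Rightarrow$\prettyref{eq|K_0_Berg_Orlicz} is where the genuinely non-mechanical work lies, but I expect \emph{not} to need the claim used in \prettyref{thm|Embed_Thm_Berg_Cont}. Fix $A>0$ and $\varepsilon>0$, set $v_h=\psi^{-1}(1/h^{N+1+\alpha})$, choose $\beta:=\max(A,2)>1$ with its $\nabla_0$-constant $C_\beta\ge 1$, and put $A':=\beta C_\beta$. Applying \prettyref{eq|R_0_Berg_Orlicz} with this $A'$ and accuracy $\varepsilon$, one gets for $h$ small enough (so that also $v_h\ge x_0$)
\[
K_{\mu,\alpha}(h)\le\varepsilon\sup_{0<t\le h}\frac{1/t^{N+1+\alpha}}{\psi(A'\psi^{-1}(1/t^{N+1+\alpha}))}=\varepsilon\sup_{w\ge v_h}\frac{\psi(w)}{\psi(A'w)},
\]
and the $\nabla_0$-inequality $\psi(\beta v_h)/\psi(v_h)\le\psi(\beta C_\beta w)/\psi(w)$, valid for $x_0\le v_h\le w$, rewrites as $\psi(w)/\psi(A'w)\le\psi(v_h)/\psi(\beta v_h)\le\psi(v_h)/\psi(Av_h)$, the last step because $\beta\ge A$. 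Hence $K_{\mu,\alpha}(h)\le\varepsilon\frac{1/h^{N+1+\alpha}}{\psi(A\psi^{-1}(1/h^{N+1+\alpha}))}$ for $h$ small, which is \prettyref{eq|K_0_Berg_Orlicz}. The main obstacle is precisely this last step: one must see that the dilation constant $C_\beta$ produced by $\nabla_0$ can be absorbed into $A'$ without spoiling the estimate, and this works only because here the little-oh accuracy $\varepsilon$ is free (whereas the $\eta$ in the proof of \prettyref{thm|Embed_Thm_Berg_Cont} was fixed) --- which is exactly why the \emph{uniform} $\nabla_0$-Condition, needed in the boundedness theorem, is not required here.
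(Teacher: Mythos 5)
Your proposal is correct, and for parts (1) and (2) it follows essentially the same route as the paper: the same normalized Berezin test functions $f_{a_n}$ combined with the compactness criterion of \prettyref{prop|cor_crit_comp_Berg} for the necessity, and \prettyref{lem|lem_tech_Berg_Orl} applied on $E_r=\mathbb{B}_N\setminus r\overline{\mathbb{B}_N}$ with $A=A(\varepsilon)$ large for the sufficiency (your variant in (1), extracting $\varepsilon_n=\|f_{a_n}\|_{L^{\psi_2}(\mu)}\to0$ and using $\psi_2(\lambda t)\ge\lambda\psi_2(t)$, is just a reshuffling of the paper's direct test-function computation). Two points are worth noting. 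First, in (2) your normalization by $C_M$ before invoking the lemma, so that $\int_{\mathbb{B}_N}\psi_1(\Lambda_{f/C_M})\,dv_\alpha\le1$, is actually a more careful rendering than the paper's own text, which chooses $\eta$ against $\int\psi_1(\Lambda_f)\,dv_\alpha$ without this normalization. Second, for (3) the paper gives no argument at all (it refers to the third part of Theorem 4.11 of the Hardy--Orlicz memoir), whereas you supply a complete and correct proof: since \prettyref{eq|R_0_Berg_Orlicz} is assumed for \emph{every} $A$, you may fix $\beta=\max(A,2)$, take its $\nabla_0$-constant $C_\beta$, and apply the hypothesis directly with $A'=\beta C_\beta$, which removes the circularity that forces the \emph{uniform} $\nabla_0$-Condition in the boundedness theorem; your closing explanation slightly misplaces the emphasis (the decisive point is this universal quantifier on $A$, rather than the free accuracy $\varepsilon$, which only delivers the little-oh in the conclusion), but the argument itself is sound.
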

\begin{proof}
1) We suppose that the canonical embedding is compact but that Condition
\prettyref{eq|R_0_Berg_Orlicz} failed to be satisfied. This means
that there exist some $\varepsilon_{0}\in\left(0,1\right)$ and $A>0$,
some sequences $\left(h_{n}\right)_{n}\subset\left(0,1\right)$ decreasing
to $0$ and $\left(\xi_{n}\right)_{n}\subset\mathbb{S}_{N}$, such
that\[
\mu\left(S\left(\xi_{n},h_{n}\right)\right)\geq\frac{\varepsilon_{0}}{\psi_{2}\left(A\psi_{1}^{-1}\left(1/h^{N+1+\alpha}\right)\right)}.\]
Let $a_{n}:=\left(1-h_{n}\right)\xi_{n}$ and consider the functions\begin{eqnarray}
f_{n}\left(z\right):=f_{a_{n}}\left(z\right) & := & \frac{1}{2^{N+1+\alpha}}\frac{\psi_{1}^{-1}\left(1/h_{n}^{N+1+\alpha}\right)}{1/h_{n}^{N+1+\alpha}}H_{a_{n}}\left(z\right)\label{eq|etiq_calc_comp_Bergm}\end{eqnarray}
where $H_{a_{n}}$ is the Berezin kernel, as in the proof of the first
part of \prettyref{thm|Embed_Thm_Berg_Cont}. Every $f_{n}$ lays
in the unit ball of $A_{\alpha}^{\psi_{1}}\left(\mathbb{B}_{N}\right)$
and $\left(f_{n}\right)_{n}\xrightarrow[n\rightarrow\infty]{}0$ uniformly
on every compact subset of $\mathbb{B}_{N}$. So \prettyref{prop|cor_crit_comp_Berg}
ensures that $\left(f_{n}\right)_{n}$ converges to $0$ in norm of
$L^{\psi_{2}}\left(\mu\right)$.

Now, by the proof of the first part of \prettyref{thm|Embed_Thm_Berg_Cont},
the following estimation holds:\[
\left|f_{n}\left(z\right)\right|\geq\frac{\psi_{1}^{-1}\left(1/h_{n}^{N+1+\alpha}\right)}{8^{N+1+\alpha}}\]
for any $z\in S\left(\xi_{n},h_{n}\right)$; therefore\begin{eqnarray*}
\int_{\mathbb{B}_{N}}\psi_{2}\left(\frac{8^{N+1+\alpha}A}{\varepsilon_{0}}\left|f_{n}\right|\right)d\mu & \geq & \psi_{2}\left(\frac{A}{\varepsilon_{0}}\psi_{1}^{-1}\left(\frac{1}{h_{n}^{N+1+\alpha}}\right)\right)\mu\left(S\left(\xi_{n},h_{n}\right)\right)\\
 & \geq & \psi_{2}\left(\frac{A}{\varepsilon_{0}}\psi_{1}^{-1}\left(\frac{1}{h_{n}^{N+1+\alpha}}\right)\right)\frac{\varepsilon_{0}}{\psi_{2}\left(A\psi_{1}^{-1}\left(1/h_{n}^{N+1+\alpha}\right)\right)}\\
 & \geq & 1\end{eqnarray*}
by the convexity of $\psi_{2}$. This yields ${\displaystyle \left\Vert f_{n}\right\Vert _{L^{\psi_{2}}\left(\mu\right)}\geq\frac{\varepsilon_{0}}{8^{N+1+\alpha}A}}$
for every $n$, which is a contradiction and gives the first part.

2) We now assume that Condition \prettyref{eq|K_0_Berg_Orlicz} is
satisfied. Thanks to the second point of \prettyref{prop|cor_crit_comp_Berg},
it is sufficient to prove that, for every $\varepsilon>0$, the norm
of the embedding\[
I_{r}:A_{\alpha}^{\psi_{1}}\left(\mathbb{B}_{N}\right)\hookrightarrow L^{\psi_{2}}\left(\mathbb{B}_{N}\setminus r\overline{\mathbb{B}_{N}},\mu\right)\]
is smaller than $\varepsilon$ for some $r_{0}\left(\varepsilon\right)$
and every $r$ such that $r_{0}\left(\varepsilon\right)\leq r<1$.
Let $\eta\in\left(0,1\right)$ and let ${\displaystyle A:=A\left(\varepsilon\right)=\frac{6.4^{N+\alpha}}{\varepsilon}>0}$;
Condition \prettyref{eq|K_0_Berg_Orlicz} ensures that there exists
$h_{A}\in\left(0,1/2\right)$ such that\[
K_{\mu,\alpha}\left(h\right)\leq\eta\frac{1/h^{N+1+\alpha}}{\psi_{2}\left(A\psi_{1}^{-1}\left(1/h^{N+1+\alpha}\right)\right)}\]
for $h\leq h_{A}$. Let now $f$ be in the unit ball of $A_{\alpha}^{\psi_{1}}\left(\mathbb{B}_{N}\right)$
and $r\in\left(0,1\right)$. By the proof of \prettyref{lem|lem_tech_Berg_Orl},
applied to $E=\mathbb{B}_{N}\setminus r\overline{\mathbb{B}_{N}}$
and $f$, there exist a constant $B>0$ given by $B={\displaystyle \frac{6.4^{N+\alpha}}{A}}=\varepsilon$,
and some constants $x_{A}>0$ and $C_{1}>0$, independent of $f$,
such that\begin{eqnarray*}
\int_{\mathbb{B}_{N}\setminus r\overline{\mathbb{B}_{N}}}\psi_{2}\left(\frac{\left|f\right|}{\varepsilon}\right)d\mu & = & \int_{\mathbb{B}_{N}\setminus r\overline{\mathbb{B}_{N}}}\psi_{2}\left(\frac{\left|f\right|}{B}\right)d\mu\\
 & \leq & \mu\left(\mathbb{B}_{N}\setminus r\overline{\mathbb{B}_{N}}\right)\psi_{2}\left(x_{A}\right)+C_{1}\eta\int_{\mathbb{B}_{N}}\psi_{1}\left(\Lambda_{f}\right)dv_{\alpha}.\end{eqnarray*}
Now, we choose $\eta$ such that ${\displaystyle C_{1}\eta\int_{\mathbb{B}_{N}}\psi_{1}\left(\Lambda_{f}\right)dv_{\alpha}\leq\frac{1}{2}}$
(which is possible thanks to \prettyref{prop|cont_ma_funct_Berg_Orlicz})
and we take $r_{0}\in\left(0,1\right)$ such that ${\displaystyle \mu\left(\mathbb{B}_{N}\setminus r\overline{\mathbb{B}_{N}}\right)\psi_{2}\left(x_{A}\right)\leq\frac{1}{2}}$
for every $r\in\left(r_{0},1\right)$. We get $\left\Vert I_{r}\left(f\right)\right\Vert _{L^{\psi_{2}}\left(\mu\right)}\leq\varepsilon$
as soon as $r_{0}<r<1$, what completes the proof.

3) The proof of the third point is essentially contained in that of
the third part of \cite[Theorem 4.11]{QUEF-LI-LE-RO-PI}.
\end{proof}
\smallskip{}
This leads us to the definition of \textit{vanishing} $\left(\psi,\alpha\right)$\textit{-Bergman-Carleson}
measures on the ball:
\begin{stdefn}
Let $\psi$ be an Orlicz function and let $\mu$ be a Borel positive
measure on $\mathbb{B}_{N}$. We say that $\mu$ is a \textit{vanishing}
$\left(\psi,\alpha\right)$\textit{-Bergman-Carleson} measure if,
for every $A>0$,\[
\mu\left(W\left(\xi,h\right)\right)=o_{h\rightarrow0}\left(\frac{1}{\psi\left(A\psi^{-1}\left(1/h^{N+1+\alpha}\right)\right)}\right)\]
uniformly with respect to $\xi\in\mathbb{S}_{N}$.
\end{stdefn}
We have the following corollary:
\begin{stcor}
\label{cor|cor_Berg_nabla_comp}Let $\psi$ be an Orlicz function
satisfying the $\nabla_{0}$-Condition and let $\mu$ be a Borel positive
measure on $\mathbb{B}_{N}$. Then $A_{\alpha}^{\psi}\left(\mathbb{B}_{N}\right)$
embeds compactly into $L^{\psi}\left(\mu\right)$ if and only if $\mu$
is a vanishing $\left(\psi,\alpha\right)$-Bergman-Carleson measure.
\end{stcor}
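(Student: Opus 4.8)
The plan is to read off \prettyref{cor|cor_Berg_nabla_comp} as the case $\psi_1=\psi_2=\psi$ of \prettyref{thm|Embed_Berg_orlizc_Comp}, using that the $\nabla_0$-Condition is exactly what lets parts~(1)--(3) of that theorem close up into an equivalence. First I would dispatch a translation of vocabulary: since $\varrho_\mu(h)=\sup_{\xi\in\mathbb{S}_N}\mu(W(\xi,h))$, the assertion that $\mu$ is a \emph{vanishing} $(\psi,\alpha)$-Bergman-Carleson measure --- i.e.\ $\mu(W(\xi,h))=o_{h\to0}\big(1/\psi(A\psi^{-1}(1/h^{N+1+\alpha}))\big)$ uniformly in $\xi$, for every $A>0$ --- is word for word condition \prettyref{eq|R_0_Berg_Orlicz} with $\psi_1=\psi_2=\psi$, for every $A>0$ (the uniformity in $\xi$ being precisely the passage to the supremum $\varrho_\mu$). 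Thus it suffices to show that $A_\alpha^\psi(\mathbb{B}_N)$ embeds compactly into $L^\psi(\mu)$ if and only if \prettyref{eq|R_0_Berg_Orlicz} holds for every $A>0$.

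For the \emph{only if} direction I would invoke part~(1) of \prettyref{thm|Embed_Berg_orlizc_Comp} directly: compactness of the embedding forces \prettyref{eq|R_0_Berg_Orlicz} for every $A>0$. Before applying the theorem one records that $\mu$ is automatically finite here --- testing the (bounded) embedding on the constant $1\in A_\alpha^\psi(\mathbb{B}_N)$ gives $\psi(1/C)\mu(\mathbb{B}_N)<\infty$ for some $C>0$, and $\psi(x)>0$ for $x>0$. For the \emph{if} direction, assume \prettyref{eq|R_0_Berg_Orlicz} holds for every $A>0$; since $\psi$ satisfies the $\nabla_0$-Condition, part~(3) of \prettyref{thm|Embed_Berg_orlizc_Comp} upgrades this to \prettyref{eq|K_0_Berg_Orlicz} for every $A>0$, and then part~(2) of the same theorem yields the compactness of the embedding (as throughout this section, $\mu$ is taken finite).

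The argument is therefore essentially a chain of three citations, so there is no substantial obstacle to surmount: the genuine content has already been established in \prettyref{thm|Embed_Berg_orlizc_Comp}, namely the $\nabla_0$-powered equivalence \prettyref{eq|R_0_Berg_Orlicz}$\,\Leftrightarrow\,$\prettyref{eq|K_0_Berg_Orlicz} and the Berezin-kernel test functions used to prove part~(1). The only points of the present proof that deserve an explicit sentence are the identification of the definition of a vanishing $(\psi,\alpha)$-Bergman-Carleson measure with \prettyref{eq|R_0_Berg_Orlicz}, and --- for completeness --- the finiteness of $\mu$.
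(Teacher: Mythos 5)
Your proposal is correct and is essentially the paper's own argument: the paper states the corollary immediately after the definition of vanishing $\left(\psi,\alpha\right)$-Bergman-Carleson measures precisely because it is the case $\psi_{1}=\psi_{2}=\psi$ of \prettyref{thm|Embed_Berg_orlizc_Comp}, with the definition identified with \prettyref{eq|R_0_Berg_Orlicz} and part (3) (the $\nabla_{0}$-Condition) closing the loop between parts (1) and (2). Your added remark on the finiteness of $\mu$ is a harmless completeness point that the paper leaves implicit.
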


\section{\label{sec|B_O_Application-to-composition}Application to composition
operators on weighted Bergman-Orlicz spaces.}

For $\phi:\mathbb{B}_{N}\rightarrow\mathbb{B}_{N}$ analytic, we denote
by $\mu_{\phi}^{\alpha}$ the pull-back measure by $\phi$ of the
weighted Lebesgue measure $v_{\alpha}$ on $\mathbb{B}_{N}$, namely
$\mu_{\phi}^{\alpha}\left(E\right)=v_{\alpha}\left(\phi^{-1}\left(E\right)\right)$
for every Borel subset $E$ of $\mathbb{B}_{N}$.

\prettyref{thm|Embed_Thm_Berg_Cont} and \prettyref{thm|Embed_Berg_orlizc_Comp}
allow us to give the following characterization with some constraints
on the Orlicz function $\psi$:
\begin{stthm}
\label{thm|caract_cont_comp_comp_op_Berg}Let $\psi$ be an Orlicz
function and let $\phi:\mathbb{B}_{N}\rightarrow\mathbb{B}_{N}$ be
holomorphic.
\begin{enumerate}
\item If $\psi$ satisfies the uniform $\nabla_{0}$-Condition, then $C_{\phi}$
is bounded from $A_{\alpha}^{\psi}\left(\mathbb{B}_{N}\right)$ into
itself if and only if $\mu_{\phi}^{\alpha}$ is a $\left(\psi,\alpha\right)$-Bergman-Carleson
measure.
\item If $\psi$ satisfies the $\nabla_{0}$-Condition, then $C_{\phi}$
is compact from $A_{\alpha}^{\psi}\left(\mathbb{B}_{N}\right)$ into
itself if and only if $\mu_{\phi}^{\alpha}$ is a vanishing $\left(\psi,\alpha\right)$-Bergman-Carleson
measure.
\end{enumerate}
\end{stthm}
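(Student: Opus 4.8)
The plan is to deduce both assertions from the embedding results of Section~2 — specifically \prettyref{cor|Cor_Berg_nabla_cont}, \prettyref{cor|cor_Berg_nabla_comp} and the compactness criterion \prettyref{prop|cor_crit_comp_Berg} — via a change of variables. First I would record the basic identity coming from the definition of the pull-back measure: for every $f\in H(\mathbb{B}_N)$ and every $C>0$,
\[\int_{\mathbb{B}_N}\psi\left(\frac{|f\circ\phi|}{C}\right)dv_\alpha=\int_{\mathbb{B}_N}\psi\left(\frac{|f|}{C}\right)d\mu_\phi^\alpha,\]
whence $\|C_\phi f\|_{A_\alpha^\psi}=\|f\|_{L^\psi(\mu_\phi^\alpha)}$ (with the value $+\infty$ allowed on either side). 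Since $\mu_\phi^\alpha(\mathbb{B}_N)=v_\alpha(\mathbb{B}_N)=1$, the measure $\mu_\phi^\alpha$ is a finite positive Borel measure on $\mathbb{B}_N$, so the theorems of Section~2 apply to it; moreover $C_\phi f$ is automatically holomorphic on $\mathbb{B}_N$.

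For part (1): by the identity above, $C_\phi$ maps $A_\alpha^\psi(\mathbb{B}_N)$ into itself precisely when every $f\in A_\alpha^\psi(\mathbb{B}_N)$ lies in $L^\psi(\mu_\phi^\alpha)$, i.e. precisely when the inclusion $A_\alpha^\psi(\mathbb{B}_N)\subset L^\psi(\mu_\phi^\alpha)$ holds; and it is then automatically bounded by the closed graph theorem, as remarked after \prettyref{thm|Embed_Thm_Berg_Cont} (one has $\|C_\phi\|=\|j_{\mu_\phi^\alpha,\alpha}\|$). Since $\psi$ satisfies the uniform $\nabla_0$-Condition, \prettyref{cor|Cor_Berg_nabla_cont} applied to $\mu=\mu_\phi^\alpha$ shows this inclusion holds if and only if $\mu_\phi^\alpha$ is a $(\psi,\alpha)$-Bergman-Carleson measure, which is the asserted equivalence.

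For part (2): I would first establish the operator version of \prettyref{prop|cor_crit_comp_Berg}, namely that $C_\phi$ is compact on $A_\alpha^\psi(\mathbb{B}_N)$ if and only if $\|C_\phi f_n\|_{A_\alpha^\psi}\to0$ whenever $(f_n)_n$ lies in the unit ball of $A_\alpha^\psi(\mathbb{B}_N)$ and converges to $0$ uniformly on compact subsets of $\mathbb{B}_N$. This is proved verbatim as the equivalence (1)$\Leftrightarrow$(2) in the proof of \prettyref{prop|cor_crit_comp_Berg}: given a sequence in the unit ball, Montel's theorem together with Lebesgue's (Fatou's) theorem yields a subsequence converging uniformly on compacta to some $f\in A_\alpha^\psi(\mathbb{B}_N)$, and one applies the criterion to a rescaling of $f_n-f$. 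By the identity $\|C_\phi f_n\|_{A_\alpha^\psi}=\|f_n\|_{L^\psi(\mu_\phi^\alpha)}$, this criterion is exactly condition (2) of \prettyref{prop|cor_crit_comp_Berg} for $\mu=\mu_\phi^\alpha$, hence it is equivalent to compactness of the canonical embedding $j_{\mu_\phi^\alpha,\alpha}:A_\alpha^\psi(\mathbb{B}_N)\hookrightarrow L^\psi(\mu_\phi^\alpha)$. As $\psi$ satisfies the $\nabla_0$-Condition, \prettyref{cor|cor_Berg_nabla_comp} identifies the latter with $\mu_\phi^\alpha$ being a vanishing $(\psi,\alpha)$-Bergman-Carleson measure, which finishes the proof.

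Most of this is bookkeeping once the change-of-variables identity is in hand; the one step deserving care — and the closest thing to an obstacle — is the normal-families argument transferring compactness of $C_\phi$ to that of $j_{\mu_\phi^\alpha,\alpha}$, where one must check that the locally uniform limit of a subsequence still belongs to $A_\alpha^\psi(\mathbb{B}_N)$ (via Fatou's lemma) and that reducing to sequences converging to $0$ is legitimate. Both points are handled exactly as in the proof of \prettyref{prop|cor_crit_comp_Berg}, so no genuinely new difficulty arises.
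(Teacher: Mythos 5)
Your proposal is correct and follows essentially the same route as the paper: the change-of-variables identity $\left\Vert C_{\phi}f\right\Vert _{A_{\alpha}^{\psi}}=\left\Vert f\right\Vert _{L^{\psi}\left(\mu_{\phi}^{\alpha}\right)}$ reduces both statements to \prettyref{cor|Cor_Berg_nabla_cont} and \prettyref{cor|cor_Berg_nabla_comp}. The paper transfers compactness directly from this isometric identity (applied to differences), whereas you re-run the sequential criterion of \prettyref{prop|cor_crit_comp_Berg} for $C_{\phi}$; this is only a minor elaboration, not a different argument.
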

\begin{proof}
Thanks to \prettyref{cor|Cor_Berg_nabla_cont} and \prettyref{cor|cor_Berg_nabla_comp},
it suffices to notice that the continuity (resp. compactness) of the
canonical embedding $j_{\mu_{\phi}^{\alpha}}:A_{\alpha}^{\psi}\left(\mathbb{B}_{N}\right)\hookrightarrow L^{\psi}\left(\mu_{\phi}^{\alpha}\right)$
is equivalent to the boundedness (resp. compactness) of $C_{\phi}:A_{\alpha}^{\psi}\left(\mathbb{B}_{N}\right)\rightarrow A_{\alpha}^{\psi}\left(\mathbb{B}_{N}\right)$.
This just proceeds from the fact that\begin{eqnarray*}
\left\Vert C_{\phi}\left(f\right)\right\Vert _{A_{\alpha}^{\psi}\left(\mathbb{B}_{N}\right)} & = & \inf\left\{ C>0,\,\int_{\mathbb{B}_{N}}\psi\left(\frac{\left|f\circ\phi\right|}{C}\right)d\sigma\leq1\right\} \\
 & = & \inf\left\{ C>0,\,\int_{\mathbb{B}_{N}}\psi\left(\frac{\left|f\right|}{C}\right)d\mu_{\phi}\leq1\right\} \\
 & = & \left\Vert j_{\mu_{\phi}^{\alpha}}\left(f\right)\right\Vert _{L^{\psi}\left(\mu_{\phi}^{\alpha}\right)},\end{eqnarray*}
for any $f\in A_{\alpha}^{\psi}\left(\mathbb{B}_{N}\right)$.\end{proof}
\begin{strem}
If we do not assume that $\psi$ satisfies the uniform $\nabla_{0}$-Condition
(resp. $\nabla_{0}$-Condition), then \prettyref{thm|Embed_Thm_Berg_Cont}
(resp. \prettyref{thm|Embed_Berg_orlizc_Comp}) provides \textit{a
priori} non-equivalent necessary and sufficient conditions to the
boundedness (resp. compactness) of $C_{\phi}$ on $A_{\alpha}^{\psi}\left(\mathbb{B}_{N}\right)$.
\end{strem}
As a particular case of the previous theorem, we state and verify
\cite[Theorem 3.6 and Theorem 4.3]{JIANG}:
\begin{stthm}
\label{thm|cont_comp_comp_op_delta_2_Berg}Let $\psi$ be an Orlicz
function which satisfies $\Delta_{2}$-Conditions and let $\phi:\mathbb{B}_{N}\rightarrow\mathbb{B}_{N}$
be holomorphic. Then
\begin{enumerate}
\item $C_{\phi}$ is bounded from $A_{\alpha}^{\psi}\left(\mathbb{B}_{N}\right)$
into itself if and only if $\mu_{\phi}^{\alpha}$ is an $\alpha$-Bergman-Carleson
measure.
\item $C_{\phi}$ is compact from $A_{\alpha}^{\psi}\left(\mathbb{B}_{N}\right)$
into itself if and only if $\mu_{\phi}^{\alpha}$ is a vanishing $\alpha$-Bergman-Carleson
measure.
\end{enumerate}
\end{stthm}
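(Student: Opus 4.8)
The plan is to reduce \prettyref{thm|cont_comp_comp_op_delta_2_Berg} to the embedding theorems of Section~2 by showing that, when $\psi$ belongs to the $\Delta_{2}$-Class, the Orlicz-type Carleson conditions of that section degenerate to the classical Bergman-Carleson condition. Everything rests on one elementary estimate: if $\psi$ satisfies the $\Delta_{2}$-Condition, say $\psi(2x)\leq K\psi(x)$ for $x\geq x_{0}$, then for each fixed $A>0$ there are constants $0<c_{A}\leq C_{A}$ and $t_{A}>0$ with
\[
c_{A}\,t\ \leq\ \psi\bigl(A\psi^{-1}(t)\bigr)\ \leq\ C_{A}\,t\qquad\text{for all }t\geq t_{A}.
\]
To prove it I would put $s=\psi^{-1}(t)$ (so $s\to\infty$ as $t\to\infty$) and iterate $\psi(2x)\leq K\psi(x)$ to obtain $\psi(2^{n}x)\leq K^{n}\psi(x)$ for $x\geq x_{0}$: when $A\geq1$, monotonicity gives $\psi(s)\leq\psi(As)\leq\psi(2^{\lceil\log_{2}A\rceil}s)\leq K^{\lceil\log_{2}A\rceil}\psi(s)$, and when $0<A<1$ it gives $\psi(As)\leq\psi(s)$ together with $\psi(As)\geq K^{-\lceil\log_{2}(1/A)\rceil}\psi(s)$, the latter by applying the iterated inequality to $\psi(s)=\psi\bigl((1/A)\cdot As\bigr)$ once $As\geq x_{0}$. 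Substituting $t=1/h^{N+1+\alpha}$ this yields, for every $A>0$,
\[
\frac{1}{\psi\bigl(A\psi^{-1}(1/h^{N+1+\alpha})\bigr)}\ \asymp\ h^{N+1+\alpha}\qquad(h\to0^{+}),
\]
with constants depending only on $A$ and $\psi$.

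Combining this with $v_{\alpha}(W(\xi,h))\asymp h^{N+1+\alpha}$ (see \prettyref{eq|eq_leb_mes_Carle_win_power_Berg}), I would deduce that, for $\psi\in\Delta_{2}$, a finite positive Borel measure $\mu$ on $\mathbb{B}_{N}$ is a $(\psi,\alpha)$-Bergman-Carleson measure if and only if it is an $\alpha$-Bergman-Carleson measure, and is a vanishing $(\psi,\alpha)$-Bergman-Carleson measure if and only if it is a vanishing $\alpha$-Bergman-Carleson measure. Consequently Conditions \prettyref{eq|R_Berg_Orlicz} and \prettyref{eq|K_Berg_Orlicz} are equivalent in this setting, and so are \prettyref{eq|R_0_Berg_Orlicz} and \prettyref{eq|K_0_Berg_Orlicz}; the passage between the ``there exists $A$'' and ``for every $A$'' quantifiers is harmless, since the displayed relation holds for each individual $A$.

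It remains to chain these equivalences through the embedding theorems. As recalled in the proof of \prettyref{thm|caract_cont_comp_comp_op_Berg}, one has $\Vert C_{\phi}f\Vert_{A_{\alpha}^{\psi}(\mathbb{B}_{N})}=\Vert j_{\mu_{\phi}^{\alpha}}(f)\Vert_{L^{\psi}(\mu_{\phi}^{\alpha})}$, so $C_{\phi}$ is bounded (resp. compact) on $A_{\alpha}^{\psi}(\mathbb{B}_{N})$ if and only if the canonical embedding $j_{\mu_{\phi}^{\alpha}}:A_{\alpha}^{\psi}(\mathbb{B}_{N})\hookrightarrow L^{\psi}(\mu_{\phi}^{\alpha})$ is continuous (resp. compact), the compact case relying also on \prettyref{prop|cor_crit_comp_Berg}. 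For (1): if $C_{\phi}$ is bounded then $j_{\mu_{\phi}^{\alpha}}$ is continuous, so \prettyref{thm|Embed_Thm_Berg_Cont}(1) gives \prettyref{eq|R_Berg_Orlicz} for some $A$, whence $\mu_{\phi}^{\alpha}$ is an $\alpha$-Bergman-Carleson measure by the previous step; conversely, if $\mu_{\phi}^{\alpha}$ is $\alpha$-Bergman-Carleson then \prettyref{eq|K_Berg_Orlicz} holds (take $A=1$), so \prettyref{thm|Embed_Thm_Berg_Cont}(2) makes $j_{\mu_{\phi}^{\alpha}}$ continuous, hence $C_{\phi}$ bounded. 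Part (2) is the same argument with \prettyref{thm|Embed_Berg_orlizc_Comp} replacing \prettyref{thm|Embed_Thm_Berg_Cont} and the $o$'s replacing the $O$'s.

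The only step with genuine content is the iteration estimate of the first paragraph (and the recognition that it is precisely what identifies $(\psi,\alpha)$-Bergman-Carleson measures with the classical ones); the mild care there is to treat $A\geq1$ and $0<A<1$ simultaneously and to keep track of the threshold $x_{0}$ below which the $\Delta_{2}$-inequality is not assumed. For part (2) one could alternatively observe, via the convexity of $\psi$, that $\Delta_{2}$ implies the $\nabla_{0}$-Condition and invoke \prettyref{thm|caract_cont_comp_comp_op_Berg}(2); but the route above disposes of both parts at once and avoids the \emph{uniform} $\nabla_{0}$-Condition, which is not automatic from $\Delta_{2}$.
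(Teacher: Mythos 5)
Your proposal is correct and follows essentially the same route as the paper: the entire content is the equivalence $\psi\left(A\psi^{-1}\left(1/h^{N+1+\alpha}\right)\right)\approx 1/h^{N+1+\alpha}$ for each $A>0$ under the $\Delta_{2}$-Condition, which the paper simply quotes from Remark 2 (a) following Theorem 4.11 of \cite{QUEF-LI-LE-RO-PI}, while you prove it directly by iterating the $\Delta_{2}$-inequality (handling $A\geq1$ and $0<A<1$ and the threshold $x_{0}$ correctly). Your explicit chaining through \prettyref{thm|Embed_Thm_Berg_Cont} and \prettyref{thm|Embed_Berg_orlizc_Comp}, rather than viewing the result as a particular case of \prettyref{thm|caract_cont_comp_comp_op_Berg} whose $\nabla_{0}$-hypotheses are not granted by $\Delta_{2}$, is a sound and slightly more careful presentation of the same argument.
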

\begin{proof}
It suffices to observe that\[
\frac{1}{\psi\left(A\psi^{-1}\left(1/h^{N+1+\alpha}\right)\right)}\approx h^{N+1+\alpha}\]
for every $A>0$, whenever $\psi$ is an Orlicz function which satisfies
the $\Delta_{2}$-Condition (see Remark 2 (a) following Theorem 4.11
in \cite{QUEF-LI-LE-RO-PI}.)
\end{proof}
\smallskip{}

A first consequence of these characterizations is the following:
\begin{stcor}
Let $\phi:\mathbb{B}_{N}\rightarrow\mathbb{B}_{N}$ be holomorphic
and let $\psi,\nu$ be two Orlicz functions. Assume that $\nu$ satisfies
the $\Delta_{2}$-Condition. Then:
\begin{enumerate}
\item If $C_{\phi}$ is bounded on $A_{\alpha}^{\nu}\left(\mathbb{B}_{N}\right)$
(e.g. on any $A_{\alpha}^{p}\left(\mathbb{B}_{N}\right)$), then is
it bounded on $A_{\alpha}^{\psi}\left(\mathbb{B}_{N}\right)$;
\item If $\psi$ satisfies the $\nabla_{0}$-Condition and if $C_{\phi}$
is compact on $A_{\alpha}^{\psi}\left(\mathbb{B}_{N}\right)$, then
it is compact on $A_{\alpha}^{\nu}\left(\mathbb{B}_{N}\right)$ (e.g.
on any $A_{\alpha}^{p}\left(\mathbb{B}_{N}\right)$).
\end{enumerate}
\end{stcor}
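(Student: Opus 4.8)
The plan is to push everything through the dictionary, established in \prettyref{thm|caract_cont_comp_comp_op_Berg} and \prettyref{thm|cont_comp_comp_op_delta_2_Berg}, between $C_{\phi}$ and its pull-back measure $\mu_{\phi}^{\alpha}$, the $\Delta_{2}$-Condition on $\nu$ being precisely what collapses the Orlicz-type Carleson condition to the classical polynomial one. There is no real obstacle here: all the analytic work is already contained in the embedding theorems \prettyref{thm|Embed_Thm_Berg_Cont} and \prettyref{thm|Embed_Berg_orlizc_Comp}. The single elementary point to keep in mind is that, $\psi$ being an Orlicz function, $\psi\circ\psi^{-1}=\mathrm{id}$ (more generally $\psi\left(A\psi^{-1}(t)\right)\leq t$ for $0<A\leq1$), so that the $A=1$ instance of the quantity $1/\psi\left(A\psi^{-1}\left(1/h^{N+1+\alpha}\right)\right)$ that occurs in \prettyref{thm|Embed_Thm_Berg_Cont} and \prettyref{thm|Embed_Berg_orlizc_Comp} equals exactly $h^{N+1+\alpha}$; this is what will place the $\Delta_{2}$-scale (hence the spaces $A_{\alpha}^{p}$) at the bottom of the whole family for boundedness and at the top for compactness.

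\textbf{Part (1).} Since $\nu$ satisfies the $\Delta_{2}$-Condition, \prettyref{thm|cont_comp_comp_op_delta_2_Berg} tells us that boundedness of $C_{\phi}$ on $A_{\alpha}^{\nu}\left(\mathbb{B}_{N}\right)$ amounts to $\mu_{\phi}^{\alpha}$ being an $\alpha$-Bergman-Carleson measure, i.e. $K_{\mu_{\phi}^{\alpha},\alpha}(h)\leq C_{0}$ for some $C_{0}>0$ and all $h\in(0,1)$. Rewriting $C_{0}=C_{0}\cdot\bigl(1/h^{N+1+\alpha}\bigr)/\psi\bigl(\psi^{-1}\bigl(1/h^{N+1+\alpha}\bigr)\bigr)$ shows that $\mu_{\phi}^{\alpha}$ satisfies Condition \prettyref{eq|K_Berg_Orlicz} with $\psi_{1}=\psi_{2}=\psi$ and $A=1$. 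The sufficiency direction of \prettyref{thm|Embed_Thm_Berg_Cont}, which assumes nothing on $\psi$, then gives that $A_{\alpha}^{\psi}\left(\mathbb{B}_{N}\right)\subset L^{\psi}\left(\mu_{\phi}^{\alpha}\right)$ continuously, and the identity $\left\Vert C_{\phi}f\right\Vert _{A_{\alpha}^{\psi}}=\left\Vert f\right\Vert _{L^{\psi}\left(\mu_{\phi}^{\alpha}\right)}$ from the proof of \prettyref{thm|caract_cont_comp_comp_op_Berg} yields boundedness of $C_{\phi}$ on $A_{\alpha}^{\psi}\left(\mathbb{B}_{N}\right)$. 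Taking $\nu(x)=x^{p}$ (which lies in the $\Delta_{2}$-class) covers the parenthetical statement.

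\textbf{Part (2).} Compactness of $C_{\phi}$ on $A_{\alpha}^{\psi}\left(\mathbb{B}_{N}\right)$ is equivalent to compactness of the canonical embedding $A_{\alpha}^{\psi}\left(\mathbb{B}_{N}\right)\hookrightarrow L^{\psi}\left(\mu_{\phi}^{\alpha}\right)$; hence, via the $\nabla_{0}$-hypothesis on $\psi$ together with \prettyref{cor|cor_Berg_nabla_comp} (or directly via the necessity part of \prettyref{thm|Embed_Berg_orlizc_Comp}, which needs no hypothesis on $\psi$), $\mu_{\phi}^{\alpha}$ is a vanishing $\left(\psi,\alpha\right)$-Bergman-Carleson measure, so $\varrho_{\mu_{\phi}^{\alpha}}(h)=o_{h\to0}\bigl(1/\psi\bigl(A\psi^{-1}\bigl(1/h^{N+1+\alpha}\bigr)\bigr)\bigr)$ for every $A>0$. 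Specializing to $A=1$ gives $\varrho_{\mu_{\phi}^{\alpha}}(h)=o_{h\to0}\bigl(h^{N+1+\alpha}\bigr)$, equivalently $K_{\mu_{\phi}^{\alpha},\alpha}(h)\to0$ as $h\to0$, i.e. $\mu_{\phi}^{\alpha}$ is a vanishing $\alpha$-Bergman-Carleson measure. As $\nu$ satisfies the $\Delta_{2}$-Condition, \prettyref{thm|cont_comp_comp_op_delta_2_Berg} now gives compactness of $C_{\phi}$ on $A_{\alpha}^{\nu}\left(\mathbb{B}_{N}\right)$, and the choice $\nu(x)=x^{p}$ again yields the stated example.
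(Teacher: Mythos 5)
Your proof is correct and follows essentially the same route as the paper: translate boundedness/compactness into Carleson-type conditions on $\mu_{\phi}^{\alpha}$ via \prettyref{thm|cont_comp_comp_op_delta_2_Berg} and the embedding theorems, and use the identity $\psi\left(\psi^{-1}\left(t\right)\right)=t$ (the $A=1$ case of $\psi\left(A\psi^{-1}\left(t\right)\right)\leq At$) to compare the Orlicz Carleson conditions with the classical ones. The only (harmless) deviation is in part (2), where you pass directly from the necessary condition \prettyref{eq|R_0_Berg_Orlicz} at $A=1$ to the vanishing $\alpha$-Carleson property instead of routing through Condition \prettyref{eq|K_0_Berg_Orlicz} as the paper does, which incidentally shows that the $\nabla_{0}$-hypothesis on $\psi$ is not really needed for this implication.
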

\begin{proof}
The first point follows from the remark before \prettyref{thm|cont_comp_comp_op_delta_2_Berg}
and from the fact that if $\mu$ is a $\alpha$-Carleson measure,
i.e. if $K_{\mu,\alpha}\leq C$ for some constant $C\geq1$, then
$\mu$ is a $\left(\psi,\alpha\right)$-Carleson measure, since $\psi\left(A\psi^{-1}\left(1/h^{N+1+\alpha}\right)\right)\leq A/h^{N+1+\alpha}$,
for any $0<A\leq1$.

For the second point, it suffices to show that Condition \prettyref{eq|K_0_Berg_Orlicz}
implies that $\mu$ is a vanishing $\alpha$-Carleson measure, what
is trivial if we apply it for $A=1$.
\end{proof}
\medskip{}
As one of the main motivation to this work, we are interested in finding
where the break of condition for boundedness of $C_{\phi}$ happens
between $H^{\infty}\left(\mathbb{B}_{N}\right)$ and $A_{\alpha}^{p}\left(\mathbb{B}_{N}\right)$.
More precisely, we wonder if there are some spaces different from
$H^{\infty}\left(\mathbb{B}_{N}\right)$ and smaller than some $A_{\alpha}^{p}\left(\mathbb{B}_{N}\right)$
on which every composition operator $C_{\phi}$ is bounded. In \cite{MACCLUER-MERCER},
the authors show the following proposition:
\begin{stprop}
\label{prop|MacCluer_Mercer}Let $\phi:\mathbb{B}_{N}\rightarrow\mathbb{B}_{N}$
be analytic. Then\begin{equation}
\mu_{\phi}^{\alpha}\left(S\left(\xi,h\right)\right)=O_{h\rightarrow0}\left(h^{\alpha+2}\right)\label{eq|MAC_MERCER_BERGMA_eq}\end{equation}
for every $\xi\in\mathbb{S}_{N}$.
\end{stprop}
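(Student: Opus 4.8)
The plan is to reduce this $N$-dimensional statement to a one-variable Littlewood subordination argument. Fix $\xi\in\mathbb{S}_{N}$ and set $\psi=\left\langle \phi\left(\cdot\right),\xi\right\rangle$; by the Cauchy--Schwarz inequality $\left|\psi\left(z\right)\right|\leq\left|\phi\left(z\right)\right|<1$ for all $z\in\mathbb{B}_{N}$, so $\psi$ is a holomorphic map from $\mathbb{B}_{N}$ into the unit disk $\mathbb{D}$, with in addition $\left|\psi\left(0\right)\right|\leq\left|\phi\left(0\right)\right|<1$. Since $d\left(\xi,w\right)^{2}=\left|1-\left\langle w,\xi\right\rangle \right|$, one has $\phi\left(z\right)\in S\left(\xi,h\right)$ if and only if $\left|1-\psi\left(z\right)\right|<h$, so that $\mu_{\phi}^{\alpha}\left(S\left(\xi,h\right)\right)=v_{\alpha}\left(\psi^{-1}\left(\Omega_{h}\right)\right)$, where $\Omega_{h}=\left\{ w\in\mathbb{D}:\,\left|1-w\right|<h\right\}$ is the one-dimensional Carleson window at the point $1$. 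A direct computation (in polar coordinates centred at $1$) gives $\nu_{\alpha}\left(\Omega_{h}\right)\asymp h^{\alpha+2}$, where $\nu_{\alpha}$ denotes the normalized weighted area measure $c_{\alpha}\left(1-\left|w\right|^{2}\right)^{\alpha}dA\left(w\right)$ on $\mathbb{D}$. Thus everything amounts to proving that $v_{\alpha}\left(\psi^{-1}\left(\Omega_{h}\right)\right)\lesssim h^{\alpha+2}$, i.e. that $\psi$ pulls this window back to a set of the expected, purely one-dimensional, size.

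The core ingredient will be a ball version of Littlewood's subordination principle: if $\psi\left(0\right)=0$, then for every non-negative subharmonic function $u$ on $\mathbb{D}$,
\[
\int_{\mathbb{B}_{N}}\left(u\circ\psi\right)dv_{\alpha}\leq C_{N,\alpha}\int_{\mathbb{D}}u\,d\nu_{\alpha}.
\]
I would prove this by fixing $\zeta\in\mathbb{S}_{N}$, observing that the slice $\lambda\longmapsto\psi\left(\lambda\zeta\right)$ is a holomorphic self-map of $\mathbb{D}$ fixing $0$, applying the classical one-variable subordination principle on each circle $\left|\lambda\right|=r$, then integrating over $\zeta$ against $d\sigma$ (using only the rotation invariance $\zeta\mapsto e^{i\theta}\zeta$ of $\sigma$), and finally integrating in $r$ against $\left(1-r^{2}\right)^{\alpha}r^{2N-1}dr$, with $r^{2N-1}\leq r$. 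For a general $\psi$ with $\psi\left(0\right)=b\in\mathbb{D}$, I would compose with the involutive disk automorphism $\sigma_{b}$ exchanging $0$ and $b$ and apply the previous inequality to $\sigma_{b}\circ\psi$ and to the subharmonic function $u\circ\sigma_{b}$; the change of variables $w\mapsto\sigma_{b}\left(w\right)$ then replaces $d\nu_{\alpha}$ by $\left(1-\left|b\right|^{2}\right)^{\alpha+2}\left|1-\bar{b}w\right|^{-2\alpha-4}c_{\alpha}\left(1-\left|w\right|^{2}\right)^{\alpha}dA\left(w\right)$, which is harmless because $b$ stays in a fixed compact subset of $\mathbb{D}$.

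To conclude, I would feed into this inequality a power of a one-variable Berezin kernel (the analogue of the test function $H_{a}$ used before): fix $\varepsilon\in\left(0,1\right)$, put $a=1-h$ and $s=\left(\alpha+2+\varepsilon\right)/2$, and take $u=\left|f\right|$ with $f\left(w\right)=\bigl(\left(1-\left|a\right|^{2}\right)/\left(1-\bar{a}w\right)^{2}\bigr)^{s}$, which is non-negative and subharmonic. On $\Omega_{h}$ one has $\left|1-\bar{a}w\right|\leq\left|1-w\right|+h\left|w\right|\leq2h$ and $1-\left|a\right|^{2}=h\left(2-h\right)\geq h$, whence $u\geq\left(4h\right)^{-s}$ there; therefore $\left(4h\right)^{-s}v_{\alpha}\left(\psi^{-1}\left(\Omega_{h}\right)\right)\leq\int_{\mathbb{B}_{N}}\left(u\circ\psi\right)dv_{\alpha}$. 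The ball subordination inequality bounds the right-hand side by a constant times $\int_{\mathbb{D}}u\left(w\right)\left(1-\left|w\right|^{2}\right)^{\alpha}dA\left(w\right)$ (the factor $\left|1-\bar{b}w\right|^{-2\alpha-4}$ being bounded above since $\left|b\right|<1$), and the standard Forelli--Rudin estimate $\int_{\mathbb{D}}\left(1-\left|w\right|^{2}\right)^{\alpha}\left|1-\bar{a}w\right|^{-\left(2+\alpha+\varepsilon\right)}dA\left(w\right)\asymp\left(1-\left|a\right|^{2}\right)^{-\varepsilon}$ then yields $\int_{\mathbb{B}_{N}}\left(u\circ\psi\right)dv_{\alpha}\lesssim\left(1-\left|a\right|^{2}\right)^{s-\varepsilon}\asymp h^{\left(\alpha+2-\varepsilon\right)/2}$. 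Multiplying back by $\left(4h\right)^{s}$ gives $v_{\alpha}\left(\psi^{-1}\left(\Omega_{h}\right)\right)\lesssim h^{s}\,h^{\left(\alpha+2-\varepsilon\right)/2}=h^{\alpha+2}$, which is the assertion.

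I expect the only real difficulty to be the bookkeeping of constants: one must check that the M\"obius correction in the subordination step, together with the extra singularity $\left|1-\bar{b}w\right|^{-2\alpha-4}$ it generates, do not spoil the exponent; both are tamed by the single inequality $1-\left|\psi\left(0\right)\right|\geq1-\left|\phi\left(0\right)\right|>0$, which moreover makes the implied constant uniform in $\xi$, as in \cite{MACCLUER-MERCER}. If one prefers to use only the classical one-variable fact that a holomorphic self-map $g$ of $\mathbb{D}$ induces a Bergman--Carleson measure on $\mathbb{D}$ with constant $\lesssim\left(1-\left|g\left(0\right)\right|^{2}\right)^{-1}$, an alternative to the ball-subordination step is to slice $\mathbb{B}_{N}$ by the disks $\left\{ z_{1}:\,\left|z_{1}\right|^{2}<1-\left|z'\right|^{2}\right\}$, apply that fact to $g_{z'}\left(\lambda\right)=\psi\bigl(\sqrt{1-\left|z'\right|^{2}}\,\lambda,z'\bigr)$ on each slice, and integrate in $z'$; there the degeneration of $g_{z'}\left(0\right)=\psi\left(0,z'\right)$ as $z'\to\partial\mathbb{B}_{N-1}$ is exactly absorbed by the Jacobian weight $\left(1-\left|z'\right|^{2}\right)^{\alpha+1}$, thanks to the Schwarz--Pick bound $1-\left|\psi\left(0,z'\right)\right|\gtrsim1-\left|z'\right|$.
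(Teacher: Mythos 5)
The paper does not actually prove this proposition: it is quoted from \cite[Proposition 4]{MACCLUER-MERCER}, where it is established for strongly pseudoconvex domains. Your main argument is correct and gives a self-contained proof in the ball case, in the same one-variable-reduction spirit as the cited source. The reduction $\mu_{\phi}^{\alpha}\left(S\left(\xi,h\right)\right)=v_{\alpha}\left(\psi^{-1}\left(\Omega_{h}\right)\right)$ with $\psi=\left\langle \phi\left(\cdot\right),\xi\right\rangle$ is exact; the ball-to-disk subordination inequality follows, as you say, from Littlewood's theorem on each slice $\lambda\mapsto\psi\left(\lambda\zeta\right)$, rotation invariance of $\sigma$, integration in polar coordinates with $r^{2N-1}\leq r$, and the M\"obius correction costs only a factor controlled by $\left(1-\left|\psi\left(0\right)\right|\right)^{-\left(2\alpha+4\right)}\leq\left(1-\left|\phi\left(0\right)\right|\right)^{-\left(2\alpha+4\right)}$, hence uniform in $\xi$ (this uniformity is what the application to Condition $\mathcal{P}$ and \prettyref{thm|B_O_compo_op_alw_bounded_delta_up_2_et_1_var} actually requires). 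The test-function step is also sound: with $s=\left(\alpha+2+\varepsilon\right)/2$ the lower bound $\left(4h\right)^{-s}$ on $\Omega_{h}$ and the Forelli--Rudin estimate combine to give exactly $h^{s}\cdot h^{\left(\alpha+2-\varepsilon\right)/2}=h^{\alpha+2}$, so the bookkeeping closes.

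The final ``alternative'' you sketch, however, is not correct as stated. For the weighted Bergman measure $\nu_{\alpha}$ on $\mathbb{D}$, the Carleson constant of the pull-back measure under a holomorphic self-map $g$ grows like $\left(1-\left|g\left(0\right)\right|\right)^{-\left(2+\alpha\right)}$, not like $\left(1-\left|g\left(0\right)\right|^{2}\right)^{-1}$; the exponent $1$ is the Hardy-space (boundary measure) exponent. To see that $2+\alpha$ is sharp, take $g$ a disk automorphism with $g\left(0\right)=b$ and a window of size $h\approx1-\left|b\right|$ centred at $b/\left|b\right|$: the pull-back measure of that window is of the order of a constant, while $h^{2+\alpha}\approx\left(1-\left|b\right|\right)^{2+\alpha}$. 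With the correct exponent, your slice-wise bound becomes $\left(1-\left|z'\right|\right)^{-\left(2+\alpha\right)}h^{\alpha+2}$ against the Jacobian weight $\left(1-\left|z'\right|^{2}\right)^{\alpha+1}$, and the resulting $z'$-integral behaves like $\int_{\mathbb{B}_{N-1}}\left(1-\left|z'\right|\right)^{-1}dv\left(z'\right)$, which diverges; even after splitting according to $1-\left|z'\right|\lessgtr h$ one only gets $O\left(h^{\alpha+2}\log\left(1/h\right)\right)$. So that route does not yield the clean $O\left(h^{\alpha+2}\right)$ without further ideas; keep the subordination argument as the actual proof.
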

In fact, this result is stated for general strongly pseudo-convex
domains instead of $\mathbb{B}_{N}$ (\cite[Proposition 4]{MACCLUER-MERCER}.)

A brief comparison of Condition \prettyref{eq|MAC_MERCER_BERGMA_eq}
and Condition \prettyref{eq|K_Berg_Orlicz}, written for $\psi_{1}=\psi_{2}=\psi$,
makes it clear that if we can find some $\psi$, among those satisfying
the uniform $\nabla_{0}$-Condition, which satisfies the following
condition $\mathcal{P}$:
\begin{description}
\item [{$\mathcal{P}$}] for every $K>0$, there exist $A>0$ and $h_{0}>0$
such that\begin{equation}
Kh^{\alpha+2}\leq\frac{1}{\psi\left(A\psi^{-1}\left(1/h^{N+1+\alpha}\right)\right)},\label{eq|alw_cont_Berg_1-1}\end{equation}
for any $0<h\leq h_{0}$,
\end{description}
then every composition operator will be bounded on the Bergman-Orlicz
space $A_{\alpha}^{\psi}\left(\mathbb{B}_{N}\right)$. The next proposition
characterizes those Orlicz functions which satisfy this condition
$\mathcal{P}$:
\begin{stprop}
\label{prop|alw_cont_Berg-1}Let $\psi$ be an Orlicz function. $\psi$
satisfies Condition $\mathcal{P}$ if and only if, for every $K>0$
(or equivalently for \textup{one} $K>0$), there exists $C>0$ such
that, for every $x>0$ large enough, we have\[
{\displaystyle \psi\left(x\right)^{\frac{N+1+\alpha}{\alpha+2}}}\leq K\psi\left(Cx\right).\]
In particular, Condition $\mathcal{P}$ is trivial if $N=1$ and coincides
with the $\Delta^{2}$-Condition whenever $N>1$.\end{stprop}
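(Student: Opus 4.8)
The plan is to strip Condition $\mathcal{P}$ down to a pointwise inequality on $\psi$ by a change of variables, and then to recognise that inequality, for $N>1$, as a reformulation of the $\Delta^{2}$-Condition. Throughout write $p=\frac{N+1+\alpha}{\alpha+2}$; since $N\geq1$ and $\alpha>-1$ one has $p\geq1$, with $p=1$ precisely when $N=1$.

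First I would rewrite \prettyref{eq|alw_cont_Berg_1-1}. Putting $u=1/h^{N+1+\alpha}$ (which tends to $+\infty$ as $h\to0$) and observing that $h^{\alpha+2}=u^{-1/p}$, the inequality $Kh^{\alpha+2}\leq 1/\psi\bigl(A\psi^{-1}(1/h^{N+1+\alpha})\bigr)$ is the same as $\psi\bigl(A\psi^{-1}(u)\bigr)\leq \tfrac1K u^{1/p}$ for $u$ large; then setting $x=\psi^{-1}(u)$ (legitimate for $u$, hence $x$, large) it becomes
\[
\psi(Ax)\leq\tfrac1K\,\psi(x)^{1/p}\qquad \text{for all large } x.
\]
So $\mathcal{P}$ says exactly: for every $K>0$ there is $A>0$ making this hold for large $x$. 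Raising the displayed inequality to the power $p$ and then replacing $x$ by $x/A$ turns it into $\psi(x)^{p}\leq\frac1{K^{p}}\psi(x/A)$ for large $x$; conversely, from $\psi(x)^{p}\leq\widetilde K\psi(Cx)$ (large $x$) one takes $p$-th roots and replaces $x$ by $x/C$ to recover the displayed form with $A=1/C$ and $K=\widetilde K^{-1/p}$. Since $K\mapsto K^{-p}$ is a bijection of $(0,\infty)$, this already proves the equivalence of $\mathcal{P}$ with the \emph{for every $K$} form of the stated condition.

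It remains to handle the "equivalently for one $K$" clause and the comparison with $\Delta^{2}$. If $N=1$, then $p=1$ and convexity of $\psi$ (giving $\psi(Cx)\geq C\psi(x)$ for $C\geq1$) makes $\psi(x)\leq K\psi(Cx)$ hold for a suitable $C$ for every $K>0$, so $\mathcal{P}$ is automatic. If $N>1$, then $p>1$; starting from $\psi(x)^{p}\leq K_{0}\psi(Cx)$ for a \emph{single} $K_{0}>0$, I would note that if $K_{0}\leq1$ this already reads $\psi(x)^{p}\leq\psi(Cx)$, while if $K_{0}>1$ convexity gives $\psi(K_{0}Cx)\geq K_{0}\psi(Cx)\geq\psi(x)^{p}$; in either case we get assertion (2) of \prettyref{prop|prop_equiv_delta_up_2} with exponent $b=p>1$, hence $\psi$ satisfies the $\Delta^{2}$-Condition. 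Conversely, if $\psi$ is in the $\Delta^{2}$-class, apply assertion (3) of \prettyref{prop|prop_equiv_delta_up_2} with $b=p+1$ to get $\psi(x)^{p+1}\leq\psi(C_{b}x)$ for large $x$, so that $\psi(x)^{p}\leq\psi(C_{b}x)\,\psi(x)^{-1}\leq K\psi(C_{b}x)$ as soon as $\psi(x)\geq1/K$, i.e. for $x$ large; this recovers the "for every $K$" form. Putting the pieces together, when $N>1$ the four statements — Condition $\mathcal{P}$, the "for every $K$" condition, the "for one $K$" condition, and the $\Delta^{2}$-Condition — are all equivalent, while when $N=1$ all of them hold trivially.

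I expect the one slightly delicate point to be this last manoeuvre: using the \emph{extra} power in $\psi(x)^{p+1}\leq\psi(C_{b}x)$ to soak up an arbitrarily small multiplicative constant, which is exactly what upgrades a single inequality (the $\Delta^{2}$-Condition) to the whole $K$-indexed family. Everything else is the substitution $u=1/h^{N+1+\alpha}$, $x=\psi^{-1}(u)$ together with routine bookkeeping of the "for large $x$" thresholds, which stay finite at each step.
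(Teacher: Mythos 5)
Your proof is correct and follows essentially the same route as the paper: the first equivalence is exactly the paper's ``straightforward rewriting'' of \prettyref{eq|alw_cont_Berg_1-1} via the substitution $x=\psi^{-1}\left(1/h^{N+1+\alpha}\right)$, and the identification with the $\Delta^{2}$-Condition for $N>1$ (and triviality for $N=1$) is the paper's appeal to \prettyref{prop|prop_equiv_delta_up_2} together with convexity of $\psi$. You simply spell out the details---including the absorption of the constant $K$ using the exponent $p+1$---that the paper leaves implicit.
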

\begin{proof}
The first part comes from a straightforward rewritening of inequality
\prettyref{eq|alw_cont_Berg_1-1}. The second part is a direct application
of \prettyref{prop|prop_equiv_delta_up_2}, using convexity of $\psi$.
\end{proof}
\smallskip{}
When $N=1$, \cite[Theorem 3.1]{QUEF-LI-LEFEVRE-BERGMAN-ORLICZ} permits
to remove the necessary uniform $\nabla_{0}$-Condition in the first
point of \prettyref{thm|caract_cont_comp_comp_op_Berg}. When $N>1$,
this trick fails as it is not difficult to check that if it could
be extended to the several complex variables setting, then it would
imply that every composition operator is bounded on $A^{p}\left(\mathbb{B}_{N}\right)$.
Yet, we know (\prettyref{prop|nabla_0_implies_nabla_2}) that every
Orlicz function satisfying the $\Delta^{2}$-Condition satisfies the
uniform $\nabla_{0}$-Condition too.

Therefore, \prettyref{thm|caract_cont_comp_comp_op_Berg}, \prettyref{prop|MacCluer_Mercer}
and \prettyref{prop|alw_cont_Berg-1} immediately yields the following
result:
\begin{stthm}
\label{thm|B_O_compo_op_alw_bounded_delta_up_2_et_1_var}Let $\psi$
be an Orlicz function.
\begin{enumerate}
\item Every composition operator is bounded from $A_{\alpha}^{\psi}\left(\mathbb{D}\right)$
into itself;
\item When $N>1$, if $\psi$ satisfies the $\Delta^{2}$-Condition, then
every composition operator is bounded from $A_{\alpha}^{\psi}\left(\mathbb{B}_{N}\right)$
into itself.
\end{enumerate}
\end{stthm}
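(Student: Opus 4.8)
The plan is to deduce both assertions from the Carleson-measure characterisation of boundedness in \prettyref{thm|caract_cont_comp_comp_op_Berg}, fed by the universal estimate on the pull-back measure $\mu_\phi^\alpha$ from \prettyref{prop|MacCluer_Mercer} together with the reformulation of condition $\mathcal{P}$ in \prettyref{prop|alw_cont_Berg-1}.

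I would first treat the case $N>1$. Assuming $\psi$ satisfies the $\Delta^{2}$-Condition, \prettyref{prop|alw_cont_Berg-1} tells us that $\psi$ satisfies condition $\mathcal{P}$: for every $K>0$ there are $A>0$ and $h_{0}>0$ with $Kh^{\alpha+2}\le 1/\psi(A\psi^{-1}(1/h^{N+1+\alpha}))$ for $0<h\le h_{0}$. Fixing an arbitrary holomorphic $\phi:\mathbb{B}_{N}\rightarrow\mathbb{B}_{N}$, \prettyref{prop|MacCluer_Mercer} provides $C>0$ with $\mu_\phi^\alpha(S(\xi,h))\le Ch^{\alpha+2}$, uniformly in $\xi\in\mathbb{S}_{N}$, for all small $h$. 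Taking $K=C$ in $\mathcal{P}$ and using the equivalence of the non-isotropic balls $S(\xi,h)$ with the Carleson windows $W(\xi,h)$, we obtain $\varrho_{\mu_\phi^\alpha}(h)=O(1/\psi(A\psi^{-1}(1/h^{N+1+\alpha})))$, i.e.\ $\mu_\phi^\alpha$ is a $(\psi,\alpha)$-Bergman-Carleson measure (Condition \prettyref{eq|R_Berg_Orlicz}). Since the $\Delta^{2}$-Condition implies the uniform $\nabla_{0}$-Condition by \prettyref{prop|nabla_0_implies_nabla_2}, part (1) of \prettyref{thm|caract_cont_comp_comp_op_Berg} applies and yields that $C_{\phi}$ is bounded on $A_{\alpha}^{\psi}(\mathbb{B}_{N})$. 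As $\phi$ is arbitrary, this proves (2).

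For $N=1$ the same computation still shows that $\mu_\phi^\alpha$ satisfies $\varrho_{\mu_\phi^\alpha}(h)=O(1/\psi(A\psi^{-1}(1/h^{\alpha+2})))$ for \emph{every} Orlicz function $\psi$, this time because $N+1+\alpha=\alpha+2$ forces condition $\mathcal{P}$ to reduce to $\psi(Ax)\le\psi(x)/K$ for large $x$, which holds with $A=1/K$ by convexity of $\psi$ --- the trivial case $N=1$ of \prettyref{prop|alw_cont_Berg-1}. It then remains to pass from this reverse-Carleson estimate to boundedness of $C_{\phi}$ without invoking the uniform $\nabla_{0}$-Condition; for this I would appeal to the one-variable embedding result \cite[Theorem 3.1]{QUEF-LI-LEFEVRE-BERGMAN-ORLICZ}, which guarantees that in one variable Condition \prettyref{eq|R_Berg_Orlicz} alone suffices for the bounded inclusion $A_{\alpha}^{\psi}(\mathbb{D})\hookrightarrow L^{\psi}(\mu)$. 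Combined with the identity $\|C_{\phi}f\|_{A_{\alpha}^{\psi}}=\|j_{\mu_\phi^\alpha}(f)\|_{L^{\psi}(\mu_\phi^\alpha)}$ established in the proof of \prettyref{thm|caract_cont_comp_comp_op_Berg}, this gives that every $C_{\phi}$ is bounded on $A_{\alpha}^{\psi}(\mathbb{D})$, proving (1).

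The bulk of the argument is a matter of assembling previously proved facts; the one genuine subtlety --- and the reason (1) holds without any hypothesis on $\psi$ while (2) needs $\Delta^{2}$ --- is the last step of the $N=1$ case. There, the passage from a pointwise bound on $\varrho_{\mu}$ to a bound on $K_{\mu,\alpha}$ (which \prettyref{thm|Embed_Thm_Berg_Cont}(2) would otherwise require, and for which the uniform $\nabla_{0}$-Condition was used in \prettyref{thm|Embed_Thm_Berg_Cont}(3)) is not formal: it relies on the finer one-variable maximal-function estimate behind \cite[Theorem 3.1]{QUEF-LI-LEFEVRE-BERGMAN-ORLICZ}, and, as remarked just before the statement, it has no analogue for $N>1$ --- it would otherwise force every composition operator to be bounded on $A^{p}(\mathbb{B}_{N})$.
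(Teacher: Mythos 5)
Your proof is correct and follows essentially the same route as the paper: the paper deduces the theorem directly from \prettyref{thm|caract_cont_comp_comp_op_Berg}, \prettyref{prop|MacCluer_Mercer} and \prettyref{prop|alw_cont_Berg-1} (with \prettyref{prop|nabla_0_implies_nabla_2} supplying the uniform $\nabla_{0}$-Condition when $N>1$), and handles $N=1$ exactly as you do, by invoking \cite[Theorem 3.1]{QUEF-LI-LEFEVRE-BERGMAN-ORLICZ} to dispense with the uniform $\nabla_{0}$-Condition since Condition $\mathcal{P}$ is trivial in one variable. Nothing essential differs.
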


\end{document}